\crefname{lem}{Lemma}{Lemmas}
\crefname{thm}{Theorem}{Theorems}
\crefname{cor}{Corollary}{Corollaries}
\crefname{prop}{Proposition}{Propositions}
\crefname{conj}{Conjecture}{Conjectures}
\crefname{openproblem}{Open Problem}{Open Problems}
\setlist[itemize]{topsep=0ex,itemsep=0ex,parsep=0ex}
\setlist[enumerate]{topsep=0ex,itemsep=0ex,parsep=0ex}
\setlist[description]{topsep=0ex,itemsep=0ex,parsep=0ex}
\newcommand{\defn}[1]{\textcolor{Maroon}{\emph{#1}}}
\newcommand{\GG}{\mathcal{G}}
\newcommand{\reduced}[1]{{#1}^{\downarrow}}
\newcommand\abs[1]{\lvert #1\rvert}
\newcommand{\cT}{\mathcal{T}}
\newcommand{\cB}{\mathcal{B}}
\newcommand{\cP}{\mathcal{P}}
\newcommand{\red}{\operatorname{red}}
\newcommand{\blowup}{\,\rotatebox{90}{$\bowtie$}}
\def\NAT@spacechar{~}
\DeclarePairedDelimiter{\floor}{\lfloor}{\rfloor}
\renewcommand{\ge}{\geqslant}
\renewcommand{\le}{\leqslant}
\renewcommand{\geq}{\geqslant}
\renewcommand{\leq}{\leqslant}
\DeclareMathOperator{\dist}{dist}
\DeclareMathOperator{\diam}{diam}
\DeclareMathOperator{\col}{col}
\DeclareMathOperator{\reach}{reach}
\DeclareMathOperator{\tw}{tw}
\DeclareMathOperator{\tww}{tww}
\DeclareMathOperator{\cw}{cw}
\DeclareMathOperator{\pw}{pw}
\DeclareMathOperator{\bw}{bw}
\DeclareMathOperator{\boolw}{blw}
\renewcommand{\thefootnote}{\fnsymbol{footnote}}
\theoremstyle{plain}
\newtheorem{thm}{Theorem}
\newtheorem{lem}[thm]{Lemma}
\newtheorem{cor}[thm]{Corollary}
\theoremstyle{definition}
\newcommand{\NN}{\mathbb{N}}
\begin{document}

\author{\'Edouard Bonnet\,\footnotemark[3] \qquad 
O-joung Kwon\,\footnotemark[4] \qquad David~R.~Wood\,\footnotemark[5]}

\footnotetext[3]{CNRS, Lyon, France (\texttt{edouard.bonnet@ens-lyon.fr}). The first author was supported by the ANR projects TWIN-WIDTH (ANR-21-CE48-0014) and Digraphs (ANR-19-CE48-0013).}

\footnotetext[4]{Department of Mathematics, Hanyang University, and Discrete Mathematics Group, Institute for Basic Science (IBS), South Korea. The second author was supported by the National Research Foundation of Korea (NRF) grant funded by the Ministry of Education (No. NRF-2021K2A9A2A11101617 and No. RS-2023-00211670), and by the Institute for Basic Science (IBS-R029-C1). (\texttt{ojoungkwon@hanyang.ac.kr}).}

\footnotetext[5]{School of Mathematics, Monash   University, Melbourne, Australia  (\texttt{david.wood@monash.edu}). Research supported by the Australian Research Council and NSERC.}

\sloppy

\title{\textbf{Reduced bandwidth: a qualitative\\ strengthening of twin-width in\\ minor-closed classes (and beyond)}}

\maketitle

\begin{abstract}
In a \defn{reduction sequence} of a graph, vertices are successively identified until the graph has one vertex. At each step, when identifying $u$ and $v$, each edge incident to exactly one of $u$ and $v$ is coloured red. Bonnet, Kim, Thomass\'e and Watrigant [\emph{J. ACM} 2022] defined the \defn{twin-width} of a graph $G$ to be the minimum integer $k$ such that there is a reduction sequence of $G$ in which every red graph has maximum degree at most $k$. For any graph parameter $f$, we define the \defn{reduced~$f$} of a graph~$G$ to be the minimum integer~$k$ such that there is a reduction sequence of $G$ in which every red graph has $f$ at most~$k$. Our focus is on graph classes with bounded reduced bandwidth, which implies and is stronger than bounded twin-width (reduced maximum degree). We show that every proper minor-closed class has bounded reduced bandwidth, which is qualitatively stronger than an analogous result of Bonnet et al.\ for bounded twin-width. In many instances, we also make quantitative improvements. For example, all previous upper bounds on the twin-width of planar graphs were at least $2^{1000}$. We show that planar graphs have reduced bandwidth at most $466$ and twin-width at most $583$. Our bounds for graphs of Euler genus $\gamma$ are $O(\gamma)$. Lastly, we show that fixed powers of graphs in a proper minor-closed class have bounded reduced bandwidth (irrespective of the degree of the vertices). In particular, we show that map graphs of Euler genus $\gamma$ have reduced bandwidth $O(\gamma^4)$. Lastly, we separate twin-width and reduced bandwidth by showing that any infinite class of expanders excluding a fixed complete bipartite subgraph has unbounded reduced bandwidth, while there are bounded-degree expanders with twin-width at most 6.
\end{abstract}

\renewcommand{\thefootnote}{\arabic{footnote}}

\newpage

\section{Introduction}
\label{Introduction}

Twin-width is a measure of graph\footnote{We consider simple, finite, undirected graphs $G$ with vertex-set $V(G)$ and edge-set $E(G)$. A \defn{graph class} is a set of graphs closed under isomorphism. A graph class is \defn{hereditary} if it is closed under taking subgraphs. A graph class is \defn{monotone} if it is closed under taking induced subgraphs. A graph $H$ is a \defn{minor} of a graph $G$ if $H$ is isomorphic to a graph obtained from a subgraph of $G$ by contracting edges. A graph class $\GG$ is \defn{proper minor-closed} if $\GG$ is closed under taking minors, and some graph is not in $\GG$.} complexity introduced by \citet{TW-I} (inspired by the work of \citet{MT04} and \citet{GM14}). The topic has attracted widespread interest~\citep{BDHK24,BHJ24,PS23,Przyb23,PSZ22,TW-I,TW-II,TW-III,TW-IV,TW-VI,BNOST24,BH21,BKRTW21,GPT22,AHKO22,SS22,BBD21,BBD22,DGJOR22,GT26,HIRV26,HJ25b,HNST25,ACHO25,BT25,HR25,ACHKO24,BCKKLT22,BH25,BD23,BGOT23,BGTT22}, often motivated by connections to model theory, logic, graph sparsity, fixed parameter tractability, enumerative combinatorics, and permutations.

We start with an informal description of twin-width. Given a graph $G$, choose two vertices $u$ and $v$ in $G$, identify $u$ and $v$ into a single new vertex, insert an edge between this new vertex and a neighbour of $u$ or $v$, and colour the inserted edge red if it is incident to exactly one of $u$ and $v$ in the original graph. Repeat this step until the graph has only one vertex. At each stage, the introduced red edges indicate an `error' in the reduction sequence. The goal is to find a sequence of identifications with small error. Twin-width measures the error by the maximum degree of the red graph (minimised over all reduction sequences). 

To formalise this idea we need the following definitions. A \defn{trigraph} is a triple $G = ( V, E, R )$ where $V$ is a finite set, and $E$ and $R$ are disjoint subsets of $\binom{V}{2}$. Elements of $V$ are \defn{vertices}. Elements of $E\cup R$ are \defn{edges}, edges in $E$ are \defn{black}, and edges in $R$ are \defn{red}. Let $V(G):=V$ and $E(G):=E$ and $R(G):=R$. Let $\widetilde{G}$ be the spanning subgraph of $G$ consisting of the red edges. For distinct vertices  $u,v\in V(G)$, let \defn{$G/u,v$} be the trigraph $(V',E',R')$ with:
\begin{itemize}
\item $V' = ( V \setminus \{u, v\} ) \cup \{w\}$ where $w\not\in V$, 
\item $G-\{u,v\} = ( G/u,v ) - w$, and
\item for all $x\in V\setminus\{u,v\}$:
\begin{itemize}
	\item $wx \in E'$ if and only if $ux \in E$ and $vx \in E$,
	\item $wx \not\in E' \cup R'$ if and only if $ux \not\in E \cup R$ and $vx \not\in E \cup R$, and
	\item $wx \in R'$ otherwise.
\end{itemize}
\end{itemize}
The \defn{underlying graph} (or \defn{total graph}) of a trigraph $G$ is the graph $H$ with $V(H)=V(G)$ and $E(H)=E(G)\cup R(G)$.

A sequence of trigraphs $G_n,G_{n-1},\dots,G_1$ is a \defn{reduction sequence} of $G_n$ (also called \defn{contraction sequence}) if for each $i\in\{2,\dots,n\}$ we have $G_{i-1}=G_i/u,v$ for some $u,v\in V(G_i)$,  and $G_1$ is a trigraph with one vertex. In this case, each `prefix' $G_n, G_{n-1}, \ldots, G_i$ is called a \defn{partial reduction sequence to} $G_i$. A (\defn{partial}) \defn{reduction sequence} of a graph $G$ is a (partial) reduction sequence of the trigraph $(V(G),E(G),\emptyset)$ (with no red edges).

Given a graph $G$, it is natural to ask for a reduction sequence $G_n,\dots,G_1$ such that the red graphs $\widetilde{G}_n,\dots,\widetilde{G}_1$ have desirable properties. For example, a graph has a reduction sequence with no red edges if and only if it is a cograph \citep{TW-I} (and cographs are considered to be particularly well-behaved). \citet{TW-I} cared about the maximum degree of the red graphs. They defined the \defn{twin-width} of a graph $G$, denoted by \defn{$\tww(G)$}, to be the minimum $k\in\NN_0$ such that there is a reduction sequence $G_n,G_{n-1},\dots,G_1$ of $G$ where $\widetilde{G}_i$ has maximum degree at most $k$ for each $i\in\{1,\dots,n\}$. 

This paper studies reduction sequences where the red graph has other properties in addition to bounded maximum degree. For any graph parameter\footnote{A \defn{graph parameter} is a function $f$ such that $f(G)\in\mathbb{N}_0$ for every graph $G$, and $f(G_1)=f(G_2)$ for all isomorphic graphs $G_1$ and $G_2$. Examples of relevance to this paper include 
 maximum degree $\Delta(G)$, 
 bandwidth $\bw(G)$, 
 pathwidth $\pw(G)$, 
 treewidth $\tw(G)$, 
 clique-width $\cw(G)$, 
and boolean-width $\boolw(G)$.
 A graph parameter $f$ is \defn{monotone} if for each $k\in\mathbb{N}_0$ the graph class $\{ G : f(G)\leq k\}$ is monotone. A graph parameter $f$ is \defn{hereditary} if for each $k\in\mathbb{N}_0$ the graph class $\{ G : f(G)\leq k\}$ is hereditary. A graph parameter $f$ is \defn{union-closed} if $f(G\cup H) \leq \max(f(G), f(H))$ for all disjoint graphs $G$ and $H$.} $f$, let \defn{reduced~$f$} be the graph parameter~$\reduced{f}$, where for any graph $G$,  \defn{$\reduced{f}(G)$} is the minimum $k\in\mathbb{N}$ such that there is a reduction sequence $G_n,G_{n-1},\dots,G_1$ of $G$ where $f(\widetilde{G}_i)\leq k$ for each $i\in\{1,\dots,n\}$. So  reduced maximum degree~$\reduced{\Delta}$ is the same as  twin-width.

Every graph has a reduction sequence in which every red graph is a star (just repeatedly identify the centre of the star with any other vertex). So it makes sense to consider graph properties that are unbounded on the class of all stars (such as maximum degree).

This line of research was initiated by \citet{TW-VI}, who considered the following  parameter\footnote{\citet{TW-VI} also considered the total number of edges in the red graph, but with a slightly different notion of reduction sequence in which red loops appear on identified vertices. The resulting parameter is called \defn{total twin-width}.}. Let $\star(G)$ be the maximum number of vertices in a connected component of a graph $G$. Then $\reduced{\star}$ is called the \defn{component-twinwidth} \citep{TW-VI}; here the goal is to find a reduction sequence such that every red graph has small components. \citet{TW-I} proved that every graph $G$ satisfies $\reduced{\star}(G) \leq 2^{\boolw(G)+1}$, which implies $\reduced{\star}(G) \leq 2^{\tw(G)+2}$ and $\reduced{\star}(G) \leq 2^{\cw(G)+1}$, where
$\boolw(G)$, $\tw(G)$, and $\cw(G)$ denote the boolean-width, treewidth and clique-width of $G$, respectively.
Note that $\Delta(G) \leq \star(G)-1$ and thus $\reduced{\Delta}(G) \leq \reduced{\star}(G)-1$.

\citet{TW-I} proved that every proper minor-closed graph class has bounded twin-width (amongst other results). The primary contribution of this paper is a qualitative strengthening of this result, where reduced maximum degree is replaced by reduced bandwidth. The \defn{bandwidth} of a graph $G$, denoted by $\bw(G)$, is the minimum $k\in\mathbb{N}_0$ such that there is an ordering $v_1,\dots,v_n$ of $V(G)$ satisfying $|i-j|\leq k$ for every edge $v_iv_j\in E(G)$. We prove that every proper minor-closed class has bounded reduced bandwidth (\cref{MinorClosedPower}). 

Note that $\Delta(G)\leq2\,\bw(G)$, implying $\reduced{\Delta}(G)\leq 2\,\reduced{\bw}(G)$. Thus, our upper bound on the reduced bandwidth of proper minor-closed classes implies the above-mentioned analogous result for twin-width, and indeed is qualitatively stronger since there are graph classes with bounded maximum degree and unbounded bandwidth. Complete binary trees are a simple example~\citep{CS89}. There are even trees with maximum degree 3, pathwidth 2, and unbounded bandwidth\footnote{\label{QnHere} Let $Q_n$ be the tree consisting of disjoint paths $P_1,\dots,P_n$, each with $n$ vertices, plus an edge joining the first vertex in $P_i$ and the first vertex in $P_{i+1}$ for each $i\in\{1,\dots,n-1\}$. Observe that $Q_n$ has maximum degree 3, pathwidth 2, $n^2$ vertices, diameter less than $3n$, and bandwidth at least $\frac{n}{3}$ (since $|V(G)| \leq \bw(G)\diam(G)+1$ for every graph $G$; see \citep{CS89}).}. Generally speaking, graphs with bounded bandwidth are considered to be particularly well-behaved. Indeed, every graph with bandwidth $k$ is a subgraph of the $k$-th power of a path\footnote{For a graph $G$ and $r\in \mathbb{N}$, the \defn{$r$-th power} of $G$, denoted by \defn{$G^r$}, is the graph with vertex-set $V(G)$ where two vertices $u$ and $v$ are adjacent in $G^r$ if and only if the distance between $u$ and $v$ in $G$ is at most $r$. The 2-nd power of $G$ is called the \defn{square} of $G$.}. 

In many cases, our results are also quantitatively stronger than previous bounds. The improvements for planar graphs are most significant. The previous proofs that planar graphs have bounded twin-width gave no explicit bounds, but it can be seen that all the previous bounds \citep{TW-VI,TW-I} were at least $2^{1000}$. We show that every planar graph has reduced bandwidth at most $466$ and twin-width at most $583$. The proof method generalises for graphs embeddable on any surface\footnote{The \defn{Euler genus} of a surface with $h$ handles and $c$ crosscaps is $2h+c$. The \defn{Euler genus} of a graph $G$ is the minimum Euler genus of a surface in which $G$ embeds without edge crossings. For $\gamma\in\NN_0$, the class of graphs of Euler genus at most $\gamma$ is a proper minor-closed class.}. In particular, we show that every graph with Euler genus $\gamma$ has reduced bandwidth at most $164\gamma+466$ and twin-width at most $205\gamma+583$ (\cref{ReducedBandwidthGenus,TwinwidthGenus}).

A key tool in our proofs are recent product structure theorems, which say that every graph of bounded Euler genus is a subgraph of the strong product of a graph with bounded treewidth and a path; see \cref{ProductStructure} for details. Our results hold for any graph class that has such a product structure, which includes several non-minor-closed graph classes. For example, a graph is \defn{$(\gamma, k)$-planar} if it has a drawing in a surface of Euler genus $\gamma$ such that every edge is involved in at most $k$ crossings (assuming no three edges cross at a single point)~\citep{DEW17}. We prove that every $(\gamma, k)$-planar graph has reduced bandwidth $2^{O(\gamma k)}$ (\cref{gkPlanar}). 

We also strengthen the above-mentioned results by showing that fixed powers of graphs in any proper minor-closed class have bounded reduced bandwidth (\cref{MinorClosedPower}).  Since FO-transductions preserve bounded twin-width~\cite[Section 8]{TW-I}, it was previously known that these graphs have bounded twin-width. Note that powers of sparse graphs can be dense; for instance, $2$-powers of stars are complete graphs. As an example of our results for graphs powers, we consider map graphs, which are well-studied generalisations of graphs embedded in surfaces. We prove that map graphs of Euler genus $\gamma$ have  reduced bandwidth $O(\gamma^4)$. We emphasise there is no dependence on degree, and that these graphs might be dense. 

\cref{Expanders} considers limitations of reduced bandwidth. We show that any infinite class of expander graphs excluding a fixed complete bipartite subgraph has unbounded reduced bandwidth (\cref{ExpanderClass}). This result separates reduced bandwidth from twin-width, since \citet{TW-II} showed there are bounded-degree expanders (thus excluding a fixed complete bipartite subgraph) with twin-width at most 6. The theme of tied and separated parameters is continued in \cref{Tied} where we show that the reduced versions of several natural parameters are separated. We conclude in \cref{OpenProblems} by presenting a number of open problems.

\section{Preliminaries}
\label{Definitions}

Let $\NN:=\{1,2,\dots\}$ and $\NN_0:=\{0,1,\dots\}$. 

For a graph $G$ and $S\subseteq V(G)$, let \defn{$G[S]$} be the \defn{induced} subgraph of $G$ with vertex-set $S$ and edge-set $\{vw\in E(G):v,w\in S\}$. For $F\subseteq E(G)$, let \defn{$G-F$} be the graph obtained from $G$ by removing edges in $F$.
For two graphs $G$ and $H$, let \defn{$G\cup H$} be the graph with vertex-set $V(G)\cup V(H)$ and edge-set $E(G)\cup E(H)$.
A \defn{clique} in $G$ is a (possibly empty) set of pairwise adjacent vertices. For disjoint sets $S, T\subseteq V(G)$, we say that $S$ is \defn{complete} to $T$ in $G$ if every vertex in $S$ is adjacent to every vertex in $T$.

For vertices $v,w\in V(G)$, a \defn{$(v,w)$-path} in $G$ is a path with end-vertices $v$ and $w$. For vertices $v,w\in V(G)$, let \defn{$\dist_G(v,w)$} be the length of a shortest $(v,w)$-path in $G$, and if no such path exists, then we set ${\dist_G(v,w) := \infty}$. For a vertex $v$ in $G$, let \defn{$N_G(v)$} $:=\{w\in V(G):vw\in E(G)\}$ and \defn{$N_G[v]$} $:=\{v\}\cup N_G(v)$. Let \defn{$\deg_G(v)$} $:=|N_G(v)|$, called the \defn{degree} of $v$ in $G$. For each $r\in\NN$,  let \defn{$N^r_G[v]$} $:=\{w\in V(G):\dist_G(v,w)\leq r\}$.
For $S\subseteq V(G)$, let \defn{$N_G(S)$} $:=\bigcup_{v\in S}N_G(v)\setminus S$ and  \defn{$N_G[S]$} $:=\bigcup_{v\in S}N_G[v]$. 

For a vertex $v$ in a trigraph $G$, let \defn{$N_G(v)$} $:=\{w\in V(G):vw\in E(G)\cup R(G)\}$. 

\subsection{Tree-decompositions}

A \defn{tree-decomposition} of a graph $G$ is a pair $(T, \mathcal{B})$ consisting of a tree $T$ and a collection $\mathcal{B}=(B_x\subseteq V(G):x\in V(T))$ of subsets of $V(G)$ (called \defn{bags}) indexed by the nodes of $T$, such that:
\begin{enumerate}[label=(\alph*)]
\item for every edge $uv\in E(G)$, some bag $B_x$ contains both $u$ and $v$, and
\item for every vertex $v\in V(G)$, the set $\{x\in V(T):v\in B_x\}$ induces a non-empty subtree of~$T$.
\end{enumerate}
The \defn{width} of a tree-decomposition is the size of the largest bag minus 1. The \defn{treewidth} \defn{$\tw(G)$} of a graph $G$ is the minimum width of a tree-decomposition of $G$. These definitions are due to \citet{RS-II}. Treewidth is recognised as the most important measure of how similar a given graph is to a tree. Note that a connected graph with at least two vertices has treewidth 1 if and only if it is a tree. 
A \defn{path-decomposition} is a tree-decomposition in which the underlying tree is a path. The \defn{pathwidth} \defn{$\pw(G)$} of a graph $G$ is the minimum width of a path-decomposition of $G$. It is well-known and easily proved that for every graph $G$, 
\[ \tw(G) \leq \pw(G) \leq \bw(G).\]

Let $(T, \mathcal{B}=(B_x:x\in V(T)))$ be a tree-decomposition of a graph $G$. 
The \defn{torso} of a bag $B_u$ is the subgraph obtained from $G[B_u]$ by adding, for each edge $uv\in E(T)$, all edges $xy$ where $x$ and $y$ are distinct vertices in $B_u\cap B_v$. 

A \defn{separation} of a graph $G$ is a pair $(A, B)$ of subsets of $V(G)$ such that $A\cup B=V(G)$ and there is no edge of $G$ between $A\setminus B$ and $B\setminus A$.

We sometimes consider a tree-decomposition $(T, \cB=(B_x:x\in V(T)))$ to be \defn{rooted} at a specific \defn{root bag} $B_r$ for some $r\in V(T)$. In this case, for every node $t\neq r$, let $t'$ be the node of $T$ adjacent to $t$ in $T$ and on the $(r,t)$-path in $T$. We say that $B_{t'}$ is the \defn{parent} of $B_t$ and $B_t$ is a \defn{child} of $B_{t'}$. A bag $B_x$ is a \defn{descendant} of a bag $B_y$ if $y$ lies on the $(r,x)$-path in $T$. 

Let $t$ be a node of $T$, and let $\{t_1, \ldots, t_d\} $ be a set of children of $t$. Let $C$ be the union of $B_t$ and every bag that is a descendant of $B_{t_i}$ for some $i\in \{1, 2, \ldots, d\}$. Let $D:=(V(G)\setminus C)\cup B_t$. As illustrated in \cref{fig:rootedsep}, $(C, D)$ is a separation of $G$ with $C\cap D=B_t$, said to be a \defn{rooted separation at $B_t$} and a \defn{rooted separation from $(T, \cB)$}.

\begin{figure}[!ht]
\centerline{\includegraphics[scale=0.35]{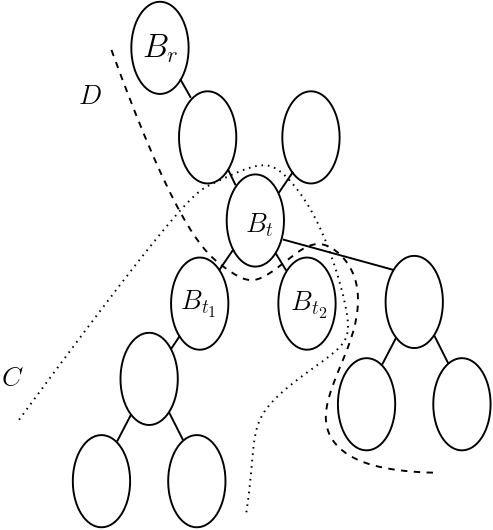} }
\caption{A rooted separation $(C,D)$ in a rooted tree-decomposition with root bag $B_r$.}
\label{fig:rootedsep}
\end{figure}

For a non-root degree-1 node $t$ of $T$, we say that $B_t$ is a \defn{leaf bag};  every other bag is said to be \defn{internal}. Note that a root bag is always internal.

For $k,q\in\NN$ with $q\ge k+1$, a rooted tree-decomposition $(T, \cB)$ is \defn{$(k,q)$-rooted} if:
\begin{itemize}
 \item the root bag is empty,
 \item every internal bag has at most $k+1$ vertices, and
 \item for every leaf bag $B$ with parent $B'$, $\abs{B\setminus B'}\le q$.
\end{itemize}
A rooted tree-decomposition $(T, \cB)$ is \defn{$(k, \infty)$-rooted} if the root bag is empty, and  every internal bag has at most $k+1$ vertices (so  leaf bags can be arbitrarily large). 

\subsection{Sparsity}

For $d\in\NN_0$, a graph $G$ is \defn{$d$-degenerate} if every subgraph of $G$ has minimum degree at most $d$. The minimum such $d$ is the \defn{degeneracy} of $G$. 

\citet{KY03} introduced the following definition. For a graph $G$, total order $\preceq$ of $V(G)$, vertex $v\in V(G)$, and  $s\in\NN$, let \defn{$\reach_s(G,\preceq,v)$} be the set of vertices $w\in V(G)$ for which there is a path $v=w_0,w_1,\dots,w_{s'}=w$ of length $s'\in \{0,\dots,s\}$ such that $w\preceq v$ and $v\prec w_i$ for all $i\in \{0,\dots, s'-1\}$. For a graph $G$ and  $s\in\NN$, the \defn{$s$-strong colouring number} $\col_s(G)$ is the minimum integer $k$ for which there is a total order~$\preceq$ of $V(G)$ with $|\reach_s(G,\preceq,v)|\leq k$ for every vertex $v$ of $G$. Strong colouring numbers interpolate between degeneracy and treewidth~\citep{KPRS16}. Indeed, $\col_1(G)$ equals the degeneracy of $G$ plus 1. At the other extreme, \citet{GKRSS18} showed that $\col_s(G)\leq \tw(G)+1$ for all $s\in\NN$, and indeed $$\lim_{s\to\infty}\col_s(G)=\tw(G)+1.$$

Observe that a graph $H$ is a minor of a graph $G$ if and only if there are pairwise vertex-disjoint subtrees $(T_v)_{v\in V(H)}$ in $G$ such that for each edge $vw\in E(H)$ there is an edge between $T_v$ and $T_w$ in $G$. For $r\in\NN$, if each such tree $T_v$ has radius at most $r$, then $H$ is an \defn{$r$-shallow} minor of $G$. Let $$\nabla_r(G):=\max_H\frac{|E(H)|}{|V(H)|},$$ 
taken over all $r$-shallow (non-empty) minors $H$ of $G$. A graph class $\GG$ has \defn{bounded expansion} if there is a function $f:\mathbb{N}_0\to\mathbb{R}$ such that $\nabla_r(G)\leq f(r)$ for every graph $G\in\GG$ and $r\in\mathbb{N}_0$. A graph class $\GG$ has \defn{linear} or \defn{polynomial expansion} respectively if there is a linear or polynomial expansion function.

\subsection{Product Structure Theorems}
\label{ProductStructure}

For graphs $G$ and $H$, the \defn{strong product} $G\boxtimes H$ is the graph with vertex-set $V(G)\times V(H)$, where vertices $(v,w)$ and $(x,y)$ are adjacent if:
\begin{itemize}
    \item  $v=x$ and $wy\in E(H)$, or 
    \item $w=y$ and $vx \in E(G)$, or
    \item $vx \in E(G)$ and $wy\in E(H)$.
\end{itemize}

The proofs of our main theorems depend on the following recent product structure results. \citet{BDJMW22} defined the \defn{row-treewidth} of a graph $G$ to be the minimum $k\in\NN_0$ such that $G$ is isomorphic to a subgraph of $H\boxtimes P$ for some graph $H$ with treewidth $k$ and path $P$. The motivation for this definition is the following `Planar Graph Product Structure Theorem' of \citet{DJMMUW20} (improved by \citet{UWY22}). 

\begin{thm}[\citep{DJMMUW20,UWY22}]
\label{PlanarProductStructure}
Every planar graph has row-treewidth at most 6.
\end{thm}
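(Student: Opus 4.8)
The plan is to prove the stronger statement by induction on the number of vertices of the planar graph $G$, following the classical approach of decomposing a planar graph into ``layers'' via a BFS layering and then, within a triangulation, peeling off vertical paths. First I would reduce to the case where $G$ is a maximal planar graph (a triangulation), since row-treewidth is monotone under subgraphs: if $G$ is a subgraph of a triangulation $G'$ and $G'$ is a subgraph of $H \boxtimes P$, then so is $G$. For a triangulation, fix a BFS spanning tree rooted at a vertex on the outer face, and let $V_0, V_1, V_2, \dots$ be the BFS layers; the path $P$ in the product will be the ``layer path'' $p_0 p_1 p_2 \cdots$, so that we need to assign each vertex of $G$ to a layer $p_i$ (which will simply be its BFS layer) and build a graph $H$ of treewidth at most $6$ on the ``columns''.

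The key combinatorial engine is the structure of triangulations relative to a BFS tree: each layer $V_i$ induces a collection of cycles and paths whose union is an outerplanar-like structure, and the subgraph between consecutive layers has a well-understood ``fan'' structure. The main step is to choose a suitable set of vertical paths (paths that are monotone with respect to the BFS layering, e.g.\ root-to-layer paths in the BFS tree, or more cleverly chosen geodesic-like paths) and to argue that contracting each vertical path to a single vertex yields a graph $H$ of bounded treewidth. Here I would invoke the fact that a planar triangulation with a BFS layering admits a tree-decomposition in which each bag is covered by a bounded number of such vertical paths — this is where the magic number emerges. The cleanest route is the ``three geodesics'' or ``BFS-tree + cotree'' argument: one shows that after contracting each vertical path, every separator corresponds to at most a constant number of paths, giving treewidth at most $3\cdot 3 - 1 = 8$ in the crude version, and then a sharper analysis of how paths interact within a single layer (using that each layer forms a union of cycles in a near-triangulation, so locally at most $3$ paths meet) brings the bound down to $6$.

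The step I expect to be the main obstacle is proving the treewidth bound on $H$ with the optimal constant $6$ rather than some larger absolute constant: the naive argument that $H$ has treewidth at most $3 \cdot (\text{number of paths per separator}) - 1$ overcounts, and getting to $6$ requires carefully exploiting the planar structure of how vertical paths within and across two consecutive layers can overlap in a separator — essentially a local planarity argument showing that a minimal separator of $G$ meets at most a small number of the chosen vertical paths. I would organise this by first setting up the BFS layering and the vertical path partition, then defining the candidate tree-decomposition of $H$ from a tree-decomposition (of bounded width, in fact of width $2$ after removing all but three layers' worth of structure, by outerplanarity) of the quotient, and finally bounding bag sizes by a path-counting argument. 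The reference to \citet{UWY} indicates the sharpened constant $6$ (the original \citep{DJMMUW20} argument gives $8$), so for the purposes of this paper I would be content to cite that sharpening rather than reprove it, and present the $8$-bound argument in full as the self-contained version.
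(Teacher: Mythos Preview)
The paper does not give its own proof of this theorem: it is quoted verbatim as a known result from \citet{DJMMUW20} (where the bound was $8$) and \citet{UWY} (where it was improved to $6$), and is used only as a black box. So there is no proof in the paper to compare your proposal against.

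That said, your sketch is broadly aligned with the approach in those cited references: reduce to a triangulation, take a BFS layering to define the path $P$, and partition $V(G)$ into ``vertical'' paths whose contraction yields the graph $H$. Two points where your description drifts from what actually happens in \citep{DJMMUW20,UWY}: the mechanism for bounding $\tw(H)$ is a recursive \emph{tripod} decomposition (three vertical paths meeting at a central triangle, found via a Sperner-type argument inside a near-triangulation bounded by previously chosen tripods), not a separator/path-counting argument of the kind you outline; and the improvement from $8$ to $6$ in \citep{UWY} comes from a genuinely different partition scheme (using one path per tripod leg more efficiently), not from a sharper analysis of the same partition. Your own instinct at the end --- to cite \citep{UWY} for the constant $6$ rather than reprove it --- is exactly what the paper does.
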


\cref{PlanarProductStructure} was generalised for graphs of given Euler genus.

\begin{thm}[\citep{DJMMUW20,UWY22}]
\label{GenusProductStructure}
Every graph of Euler genus $\gamma$ has row-treewidth at most $2\gamma+6$.
\end{thm}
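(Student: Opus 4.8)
This theorem is due to \citet{DJMMUW20}, with the improved additive constant from \citet{UWY}, so I only sketch the argument; the plan is to reduce the genus case to the planar case (\cref{PlanarProductStructure}). First recall the relevant framework: to bound the row-treewidth of a graph $G$ it suffices to produce a BFS layering $(V_0,V_1,\dots)$ of $G$ together with a partition $\mathcal{P}$ of $V(G)$ in which each part meets each layer $V_i$ in at most one vertex; then $G$ is a subgraph of $H\boxtimes P$, where $H$ is the quotient $G/\mathcal{P}$ and $P$ is the path $V_0V_1\cdots$, so it suffices to bound $\tw(G/\mathcal{P})$. The layering-aware form of \cref{PlanarProductStructure} --- the form in which it is actually proved --- supplies, for every planar graph and every BFS layering, such a partition with quotient treewidth at most $6$.

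The plan for Euler genus $\gamma$ is as follows. Embed $G$ in a surface $\Sigma$ of Euler genus $\gamma$, assume the embedding is $2$-cell (otherwise $G$ embeds in a surface of smaller Euler genus), and fix a BFS spanning tree $T$ of $G$ rooted at a vertex $r$. The key topological input is that there are at most $\gamma$ non-tree edges $e_1,\dots,e_\gamma$ whose fundamental cycles $C_1,\dots,C_\gamma$ (with respect to $T$) carry a $\mathbb{Z}_2$-homology basis of $\Sigma$; consequently $G-X$ is planar, where $X:=V(C_1)\cup\dots\cup V(C_\gamma)$. Since $T$ is a BFS tree, every root-to-node path in $T$ meets each layer of the BFS layering in at most one vertex, and each $C_i$ is a union of two such subpaths; hence $X$ decomposes into at most $2\gamma$ ``vertical'' sets, each meeting each layer in at most one vertex. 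Apply the layering-aware \cref{PlanarProductStructure} to $G-X$ to obtain a partition $\mathcal{P}_0$ of $V(G)\setminus X$ of the required kind with $\tw((G-X)/\mathcal{P}_0)\le 6$, and let $\mathcal{P}$ consist of the parts of $\mathcal{P}_0$ together with the at most $2\gamma$ vertical sets of $X$. Every part of $\mathcal{P}$ still meets each layer in at most one vertex, and $G/\mathcal{P}$ arises from $(G-X)/\mathcal{P}_0$ by adding at most $2\gamma$ vertices, so $\tw(G/\mathcal{P})\le 2\gamma+6$. Therefore $G\subseteq (G/\mathcal{P})\boxtimes P$, and $G$ has row-treewidth at most $2\gamma+6$.

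I expect the topological lemma to be the main obstacle in a careful write-up. One must check that deleting the vertex sets of fundamental cycles carrying a homology basis --- rather than merely cutting $\Sigma$ along them --- leaves a planar graph; this works because no edge of the embedded graph $G$ crosses an embedded cycle of $G$, so $G-X$ embeds in $\Sigma$ with those curves removed, a surface of Euler genus $0$. A secondary technical point is the compatibility of the planar step with the layering: \cref{PlanarProductStructure} must be applied so that the partition $\mathcal{P}_0$ and the extra vertical sets are all layered with respect to one common layering derived from the BFS tree $T$ of $G$, which is precisely the layering-aware form in which the planar product structure theorem is established.
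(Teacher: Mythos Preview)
The paper does not prove \cref{GenusProductStructure}; it is quoted from \citep{DJMMUW20,UWY} and used as a black box. Your sketch does follow the strategy of those sources: find a planarising set $X$ consisting of at most $2\gamma$ BFS-vertical paths (the two root-paths in each fundamental cycle of a homology basis), apply the planar product-structure machinery to what remains, and reinstate the $2\gamma$ vertical paths as extra parts of the partition, raising the quotient treewidth by at most $2\gamma$.

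One substantive comment on the sketch. You label the layering-compatibility issue ``secondary'', but it is the crux of the genus reduction and your proposed resolution is not quite right. The layering that $G-X$ inherits from the BFS tree $T$ of $G$ is a layering (every edge joins the same or adjacent layers), but it is \emph{not} in general a BFS layering of $G-X$: a vertex whose $T$-path to $r$ passes through $X$ may be much closer to the nearest surviving vertex in $G-X$ than its inherited layer index suggests. So one cannot invoke the layering-aware planar theorem as stated. In \citep{DJMMUW20} this is handled not by applying the planar result to $G-X$ directly, but by first triangulating, cutting the surface open along the cycles $C_1,\dots,C_\gamma$ to obtain a planar triangulation $G^+$ (in which the vertices of $X$ are duplicated along the boundary and a single new root is added in the outer face), and then applying the planar result to $G^+$ with \emph{its own} BFS layering from the new root; only afterwards is the partition pushed back to $G$. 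Your outline is morally correct, but the sentence ``this is precisely the layering-aware form in which the planar product structure theorem is established'' overstates what comes for free.
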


More generally, \citet{DJMMUW20} proved that a minor-closed class has bounded row-treewidth if and only if it excludes some apex graph\footnote{A graph $X$ is \defn{apex} if $X-v$ is planar for some vertex $v$, or $V(X)=\emptyset$.}. For an arbitrary  proper minor-closed class, \citet{DJMMUW20} obtained the following `Graph Minor Product Structure Theorem', where $A+B$ is the \defn{complete join} of graphs $A$ and $B$ (obtained from disjoint copies of $A$ and $B$ by adding every edge with one end-vertex in $A$ and one end-vertex in $B$).

\begin{thm}[\citep{DJMMUW20}] 
\label{MinorFree}
For every graph $X$, there exist $k,a\in\mathbb{N}$ such that every $X$-minor-free graph has a tree-decomposition in which every torso is a subgraph of $(H\boxtimes P)+K_a$ for some graph $H$ of treewidth at most $k$ and some path $P$. 
\end{thm}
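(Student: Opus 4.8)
The plan is to derive \cref{MinorFree} by combining the Graph Minor Structure Theorem of Robertson and Seymour with a product structure theorem for graphs that embed in a fixed surface with a bounded number of bounded-depth vortices; the $+K_a$ term will absorb the apex vertices produced by the structure theorem.

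First I would invoke the Graph Minor Structure Theorem: for every graph $X$ there are constants $h=h(X)$ and $a=a(X)$ such that every $X$-minor-free graph has a tree-decomposition of adhesion at most $h$ in which every torso is $h$-almost-embeddable; that is, each torso can be obtained from a graph $G'$ embeddable in some surface of Euler genus at most $g:=h$ by adding at most $p:=h$ vortices, each of depth at most $d:=h$, and at most $a$ apex vertices adjacent arbitrarily. (The clique edges added to a torso along an adhesion set have size at most $h+1$ and can be absorbed into the almost-embeddable structure, e.g.\ by first passing to a version of the structure theorem with adhesion at most three.) Fix one such torso $G^\star$, let $A$ be its apex set, so $\abs{A}\le a$, and let $G'':=G^\star-A$, a graph embeddable in a surface of Euler genus at most $g$ with at most $p$ vortices of depth at most $d$.

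The key step is then the following lemma, which I would isolate and prove: there is a function $f$ such that every graph $G''$ embeddable in a surface of Euler genus at most $g$ with at most $p$ vortices each of depth at most $d$ is a subgraph of $H\boxtimes P$ for some graph $H$ with $\tw(H)\le f(g,p,d)$ and some path $P$. Granting the lemma, $G^\star-A\subseteq H\boxtimes P$, and adding $A$ back shows $G^\star\subseteq (H\boxtimes P)+K_{\abs{A}}\subseteq (H\boxtimes P)+K_a$; since $G^\star$ was an arbitrary torso, \cref{MinorFree} follows with $k:=f(h,h,h)$.

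To prove the lemma I would start from \cref{GenusProductStructure} applied to the surface graph $G_0$ obtained from $G''$ by deleting all vortex vertices, giving $G_0\subseteq H_0\boxtimes P$ with $\tw(H_0)\le 2g+6$; this witness comes equipped with a BFS layering of $G_0$ (the $P$-coordinate) and a bounded-width tree-decomposition of the quotient (the $H_0$-part). Each vortex is attached along a facial cycle $C$ of $G_0$ and carries a path-decomposition of width at most $d$ whose order follows $C$. The task — and the main obstacle — is to place the vortex vertices into a single product structure consistently with both the layering of $G_0$ and the linear order along $C$, which is delicate because a facial cycle can traverse many BFS layers. The resolution I would follow is to choose the layering (or refine $H_0$) so that the portion of $C$ lying in any single layer has bounded width in the decomposition of the quotient, and then to insert the vortex vertices associated with a vertex $c_i\in C$ lying in layer $L_j$ into layer $j$ of a new product, creating $O(d)$ new bag-vertices of $H$ adjacent to the bag containing $c_i$; handling all $\le p$ vortices this way multiplies the treewidth bound by a function of $p$ and $d$. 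All remaining work is bookkeeping with constants inherited from the structure theorem and \cref{GenusProductStructure}.
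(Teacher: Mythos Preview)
The paper does not prove \cref{MinorFree} at all: it is quoted verbatim from \citet{DJMMUW20} and used as a black-box tool (see the attribution \verb|[\citep{DJMMUW20}]| in the theorem header and the surrounding text in \cref{ProductStructure}). There is therefore nothing in this paper to compare your proposal against.

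For what it is worth, your outline is essentially the strategy of the original source: invoke the Robertson--Seymour structure theorem to obtain a tree-decomposition with $h$-almost-embeddable torsos, strip the apex set $A$ into the $+K_a$ term, and prove a product-structure lemma for graphs embedded in a bounded-genus surface with boundedly many bounded-depth vortices. The part you flag as ``the main obstacle'' --- integrating vortices attached along facial cycles that may cross many BFS layers --- is indeed where all the work lies, and your sketch of it (``choose the layering so that the portion of $C$ lying in any single layer has bounded width \ldots\ insert the vortex vertices \ldots\ into layer $j$'') is too vague to stand as a proof; in \citep{DJMMUW20} this step requires carefully constructing the layered partition of the surface graph and the vortex simultaneously, not patching the vortex onto a pre-existing product structure for $G_0$. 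If you wanted to actually write this argument out, that is the step that would need substantial expansion.
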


Product structure theorems for several non-minor-closed classes are known \citep{DMW23,HW24}. Here is one example.

\begin{thm}[\citep{DMW23}] 
\label{gkPlanarProduct}
Every $(\gamma,k)$-planar graph has row-treewidth $O(\gamma k^6)$.
\end{thm}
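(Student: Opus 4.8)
The plan is to reduce to the Genus Product Structure Theorem (\cref{GenusProductStructure}) via planarization, and then carefully ``un-planarize''. First I would fix a drawing of the $(\gamma,k)$-planar graph $G$ in a surface of Euler genus $\gamma$ that is \emph{good} (no edge crosses itself, adjacent edges do not cross, two edges cross at most once, and no three edges cross at a common point); a standard perturbation keeps at most $k$ crossings per edge. Let $G^{\times}$ be the \defn{planarization}: place a dummy vertex at each crossing, so each edge $uv$ of $G$ becomes a path $P_{uv}$ with at most $k+1$ edges in $G^{\times}$ whose internal vertices are all dummy vertices, and every dummy vertex has degree $4$. Since $G^{\times}$ embeds in the same surface without crossings, it has Euler genus at most $\gamma$; so by \cref{GenusProductStructure} there are a graph $H$ with $\tw(H)\le 2\gamma+6$, a path $P$, and an embedding $G^{\times}\subseteq H\boxtimes P$. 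Let $L_1,L_2,\dots$ be the layering of $V(G^{\times})$ coming from $P$.

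Since consecutive vertices of $H\boxtimes P$ lie in equal or consecutive layers and each $P_{uv}$ has at most $k+1$ edges, each $P_{uv}$ meets at most $k+1$ consecutive layers, so $|i-j|\le k+1$ whenever $uv\in E(G)$ with $u\in L_i$, $v\in L_j$. Grouping layers into blocks $W_t:=L_{t(k+1)+1}\cup\dots\cup L_{(t+1)(k+1)}$ for $t\ge0$ therefore gives a layering $(W_t)_{t\ge0}$ of $G$. What remains is to produce, uniformly in $t$, a graph of treewidth $O(\gamma k^6)$ capturing $G[W_t\cup W_{t+1}]$ compatibly with this layering, and then glue these along the shared blocks into a single $H'$ with $G\subseteq H'\boxtimes P'$ and $\tw(H')=O(\gamma k^6)$; equivalently, one seeks a partition of $V(G)$ meeting each block in at most $O(k)$ parts whose quotient graph has small treewidth.

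The main obstacle --- and the technical heart --- is that $G$ is \emph{not} a minor of $G^{\times}$ (e.g.\ $K_6$ is $(0,1)$-planar yet is not a minor of any planar graph), so the product structure of $G^{\times}$ cannot simply be transported to $G$; and the crude containment $G[W_t\cup W_{t+1}]\subseteq\bigl(G^{\times}[W_{t-1}\cup\dots\cup W_{t+2}]\bigr)^{k+1}$ is worthless, since powers of bounded-treewidth graphs have unbounded treewidth. The resolution must use that the paths $P_{uv}$ pass only through \emph{degree-$4$} dummy vertices, and do so \emph{straight} (at a crossing of $e$ and $e'$, the path along $e$ enters and leaves by the two arcs of $e$), so that a dummy vertex has boundedly many straight continuations: the graph obtained from an $O(k)$-layer slab of $G^{\times}$ by adding, for each straight dummy-path, an edge between its real ends and then deleting the dummy vertices, has treewidth only polynomially larger than that of the slab, which is $O(\gamma k)$ by \cref{GenusProductStructure}. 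Making this precise --- building a tree-decomposition of each slab of $G$ directly from that of the corresponding slab of $G^{\times}$ by absorbing each straight dummy-path into bags, then gluing the slabs' decompositions along shared blocks --- is where the work lies, and where the exponent of $k$ is determined.
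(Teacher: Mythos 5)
The paper does not prove \cref{gkPlanarProduct}; it states it as a citation of \citep{DMW}, so there is no internal argument to compare against. Your outline follows the same broad route as \citep{DMW}: planarize to obtain $G^{\times}$ of Euler genus at most $\gamma$, invoke \cref{GenusProductStructure} on $G^{\times}$, coarsen the resulting layering into blocks of $O(k)$ consecutive layers so that every edge of $G$ spans at most two consecutive blocks, and then transfer the product structure from $G^{\times}$ to $G$ by absorbing the dummy-paths. You also correctly diagnose the two natural dead-ends (minor containment fails, and passing to powers destroys treewidth) and identify the crucial saving observation: each dummy vertex has degree $4$ and supports exactly two straight continuations, so $G$ is obtained from $G^{\times}$ by a bounded-length shortcut system of bounded internal multiplicity.

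That said, the step you defer is exactly where the theorem lives. The claim that absorbing straight dummy-paths into a slab increases the product parameters only polynomially is essentially the shortcut-system lemma of \citep{DMW}, and it carries all the technical weight (and determines the exponent $6$); asserting it as an ``observation'' does not discharge it. Two concrete soft spots in your sketch: first, your reformulation --- ``a partition of $V(G)$ meeting each block in at most $O(k)$ parts whose quotient graph has small treewidth'' --- is not the characterization of row-treewidth. Row-treewidth corresponds to a partition in which each \emph{part} meets each \emph{layer} at most once; a partition in which parts meet layers $O(k)$ times only gives $G\subseteq H'\boxtimes P'\boxtimes K_{O(k)}$, and one must still fold the $K_{O(k)}$ factor into $H'$ (at a multiplicative cost in the treewidth), and this is itself part of the bookkeeping. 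Second, ``gluing tree-decompositions of overlapping slabs along shared blocks'' does not automatically yield a tree-decomposition of $G$: consecutive slab decompositions must be made to agree on the shared block (both as vertex-partitions and as decompositions), and arranging that consistency is a genuine obstacle, not a formality. Until those two points are resolved, what you have is a correct plan with the right obstacles named, not a proof.
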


With these tools in hand we now give the intuition behind our proofs. First note that \citet{TW-II} showed that $\tww(G\boxtimes H)$ is bounded by a function of $\tww(G)$, $\tww(H)$ and $\Delta(H)$ (and \citet{PS23a} gave analogous results for various other graph products). However, this is not enough to conclude results about subgraphs of $G\boxtimes H$ since twin-width is not monotone. Consider a subgraph $G$ of $H\boxtimes P$ where $H$ has bounded tree-width and $P$ is a path. As mentioned in \cref{Introduction}, $H$ has bounded component twin-width. This says there is a reduction sequence for $H$ such that each red component $X$ has bounded size. Observe that $X\boxtimes P$ has bounded bandwidth. Our strategy is to construct a  reduction sequence for $G$ so that each red component is of the form $X\boxtimes P$ where $X$ is a bounded-size subgraph of $H$, implying each red subgraph has bounded bandwidth. A key to the proof is to find vertex identifications so that the resulting graph stays a subgraph of some $H\boxtimes P$. The above intuitive description has some inaccuracies. Implementing this strategy rigorously needs several further ideas, especially in the setting of powers (\cref{BoundedRowTreewidth}). Finally, to apply \cref{MinorFree} for $K_t$-minor-free graphs we need more ideas to cater for apex vertices and tree-decompositions (\cref{ProperMinorClosedClasses}). 

\section{Distance Profiles}
\label{NeighbourhoodComplexity}

The following concept will help to optimise our bounds on reduced bandwidth and twin-width, and is of independent interest because of connections to VC-dimension~\citep{PP20,GHOORRVS17,RVS19} and sparsity theory~\citep{EGKKPRS17,RVS19}. 

Fix a graph $G$ and $r\in\NN$. 
For vertices $v,w\in V(G)$, let
\[ \dist^r_G(v,w) :=
\begin{cases}
 \dist_G(v,w) & \text{if $\dist_G(v,w)\le r$,}\\
 \infty & \text{otherwise.}
  \end{cases}
\]
For $A\subseteq V(G)$ and $v\in V(G)\setminus A$, the \defn{distance-$r$ profile of $v$ on $A$} is
\[\pi^r_G(v,A):= \{ (w,\dist^r_G(v,w) ) : w\in A \},\]
and 
let
\[\pi^r_G(A) := | \{ \pi^r_G(v,A) : v \in V(G)\setminus A \} |. \]
This definition is different to similar definitions in \citep{EGKKPRS17,RVS19} in that we only consider $v\in V(G)\setminus A$. 
Clearly, 
\begin{equation}\label{equ:profile}
    \pi^r_G(A)\le (r+1)^{|A|}. 
\end{equation}

 Our upper bounds on reduced bandwidth and twin-width are expressed in terms of $\pi^r_G(A)$ for sets $A$ of a given size (see \cref{RowTreewidthNeighbourhoodPowers}). Motivated by this connection, \cref{FirstNeigh} presents various upper bounds on $\pi^1_G(A)$ that are tight  for graphs of given Euler genus (\cref{SurfaceNeighbours}) and $K_t$-minor-free graphs (\cref{MinorfreeNeighbour}). \cref{SecondNei} gives an upper bound on $\pi^2_G(A)$ where $G$ is a graph of given Euler genus. 

\subsection{First Neighbourhoods} 
\label{FirstNeigh}

This subsection presents bounds on $\pi^1_G(A)$ for various graphs $G$. Note that 
 \[\pi^1_G(A)=|\{N_G(u)\cap A:u\in V(G)\setminus A\}|.\] 
So in a monotone class, we may assume that $G$ is bipartite with bipartition $\{A,V(G)\setminus A\}$, where $N_G(u)\cap X\neq N_G(v)\cap X$ for distinct $u,v\in V(G)\setminus A$. 

The following lemma is a more precise version of a result by \citet{GHOORRVS17} (see \cref{NeighNabla} below). For a graph $H$ and $k\in\NN_0$, let \defn{$C(H,k)$} be the number of cliques of order $k$ in $H$, where $\emptyset$ is considered to be the only clique of order $0$ in $G$. So $C(H,0)=1$, $C(H,1)=|V(H)|$, and $C(H,2)=|E(H)|$. Let \defn{$C(H,\leq k)$} be the number of cliques of order at most $k$ in $H$, and let \defn{$C(H)$} be the total number of cliques in $H$. 

\begin{lem}
\label{Neighbourlyness}
Let $G$ be a bipartite graph with bipartition $\{X,Y\}$, where $K_t$ is not a 1-shallow minor of $G$. Then there is a 1-shallow minor $H$ of $G$ on $|X|$ vertices, such that 
$$|\{N_G(u):u\in Y\}| \leq 
\begin{cases}
C(H,\leq t-2) & \text{if }t\geq 4\\
C(H,\leq 2) & \text{if }t=3.\\
\end{cases}$$
\end{lem}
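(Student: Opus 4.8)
The plan is to process the vertices of $Y$ greedily, using each one to either define a new vertex of the shallow minor $H$ or to associate it injectively with a clique of $H$. Since $G$ is bipartite with parts $\{X,Y\}$ and we may assume all neighbourhoods $N_G(u)$ with $u\in Y$ are distinct, it suffices to construct an injection from $\{N_G(u):u\in Y\}$ into the set of cliques of order at most $t-2$ (resp.\ at most $2$) of some $1$-shallow minor $H$ of $G$ with $|X|$ vertices. I would build $H$ on vertex set $X$, adding edges as the argument proceeds; the branch sets will be stars centred at vertices of $X$ together with, for some of them, a private vertex of $Y$, so every tree has radius at most $1$ and $H$ is indeed a $1$-shallow minor on $|X|$ vertices.

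The key step is the following classification. Order $Y=\{u_1,\dots,u_m\}$ arbitrarily and process them in turn. For each $u_i$, look at its neighbourhood $S_i:=N_G(u_i)\subseteq X$. If $S_i$ is already a clique in the graph $H$ built so far, associate $u_i$ with the clique $S_i$ — but we must check $|S_i|\le t-2$; this is where the $K_t$-minor-freeness enters, since a large clique among already-identified vertices together with enough ``fresh'' structure would give a $K_t$ as a $1$-shallow minor. If $S_i$ is not yet a clique in $H$, pick two vertices $x,y\in S_i$ that are non-adjacent in $H$, contract the edge $u_ix$ (absorbing $u_i$ into the branch set of $x$), thereby adding at least the edge $xy$ to $H$, and recurse/charge appropriately. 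The bookkeeping needs care: we want a clean statement that after processing, each $u_i$ either contributed a new edge to $H$ or is charged to a clique of $H$ of the correct size, and that no two $u_i$ are charged to the same clique. To get injectivity one uses that distinct $u_i$ have distinct neighbourhoods, so if both are charged (not contracted) they are charged to distinct cliques; the subtlety is the interaction with $u_i$'s that were contracted into $X$ and thus changed $H$ — one should fix $H$ as the final graph and re-examine all the charges at the end, or equivalently observe the edge set of $H$ only grows.

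The size bound $|S_i|\le t-2$ (when $t\ge 4$) is the main obstacle and the real content. The argument: suppose at the end $H$ contains a clique $K$ of order $t-1$ with $K=N_G(u)$ for some $u\in Y$ not contracted into $X$. Each edge of $H$ arose by contracting some $u_j\in Y$ into a vertex of $X$; in particular, for the clique $K$ one can extract, for a spanning set of its $\binom{t-1}{2}$ edges, distinct witnessing vertices of $Y$ (distinctness needs a short argument, or a modest loss in the bound — here I expect the paper's bound of ``$t-2$'' rather than a weaker constant comes from a sharper accounting). Using $u$ together with $K$ one then builds a $1$-shallow minor: the $t-1$ branch sets are the stars realising $K$ as a clique, and one more branch set $\{u\}$, with $u$ adjacent in $G$ to every vertex of $K$; this yields a $K_t$ as a $1$-shallow minor of $G$, contradiction. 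The case $t=3$ is the special case where ``clique of order $\le t-2=1$'' would be too weak, and instead one checks directly that the neighbourhoods correspond injectively to vertices and edges of $H$ (cliques of order $\le 2$), matching the displayed bound $C(H,\le 2)$; this is essentially the statement that a bipartite graph with no $K_{2,2}$... — more precisely with no $1$-shallow $K_3$-minor — has few distinct neighbourhoods, proved by the same greedy edge-creation scheme without needing the size restriction. Throughout, the delicate points are (i) ensuring radius-$1$ branch sets so that ``$1$-shallow'' is genuinely respected, and (ii) the injectivity of the charging once $H$ is frozen.
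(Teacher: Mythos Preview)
Your approach is essentially the paper's, and the size argument (if $N_G(u)$ is a $(t-1)$-clique in $H$ then the branch sets together with $\{u\}$ give a $1$-shallow $K_t$) is exactly right. However, your charging scheme has a concrete gap. With an arbitrary processing order, a contracted vertex $u_j$ of degree $\geq 3$ may be charged to its chosen non-adjacent pair $\{a,b\}$, and then a later \emph{uncontracted} vertex $u_i$ of degree $2$ with $N_G(u_i)=\{a,b\}$ (now a clique in $H$) is also charged to the same size-$2$ clique $\{a,b\}$. The two vertices have distinct neighbourhoods, so your ``distinct neighbourhoods $\Rightarrow$ distinct cliques'' argument does not cover this collision. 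Your global injection into cliques of size $\leq t-2$ therefore fails, and you only get $|Y|\leq C(H,\leq t-2)+C(H,2)$.

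The paper avoids this by defining $H$ non-sequentially. It takes a \emph{maximal} set $A$ with $Y_2\subseteq A\subseteq Y\setminus(Y_0\cup Y_1)$ such that each $v\in A$ has a designated pair $X_v\subseteq N_G(v)$, $|X_v|=2$, with all $X_v$ distinct; then $H$ is built by contracting each $v\in A$ into one endpoint of $X_v$. This gives $|A|\leq C(H,2)$. The crucial point is that the remainder $B:=Y\setminus(Y_0\cup Y_1\cup A)$ consists only of vertices of degree $\geq 3$ (because $Y_2\subseteq A$), and by maximality of $A$ every pair inside $N_G(w)$ for $w\in B$ is already some $X_v$, hence an edge of $H$; so $N_G(w)$ is a clique of size in $\{3,\dots,t-2\}$. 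Thus $B$ is charged injectively to cliques of size $\geq 3$, which cannot collide with the edge-charges of $A$. Your greedy scheme becomes exactly this if you process all of $Y_2$ first; with arbitrary order it does not.
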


\begin{proof}
We may assume that $N_G(u)\neq N_G(v)$ for all distinct $u,v\in Y$. 
For $i\in\NN_0$, let $Y_i:=\{v\in Y: \deg_G(v)=i\}$. 
For each $v\in Y_2$, let $X_v:=N_G(v)$. 
By assumption, $X_v\neq X_w$ for distinct $v,w\in Y_2$. 
Let $A$ be a maximal set such that:
\begin{itemize}
    \item $Y_2\subseteq A\subseteq Y\setminus(Y_0\cup Y_1)$, and 
    \item for each $v\in A$ there exists $X_v\subseteq N_G(v)$ with $|X_v|=2$ where $X_v\neq X_w$ for all distinct $v,w\in A$.
\end{itemize} 

Let $H$ be the graph obtained from $G[X\cup A]$, where for each $v\in A$, we pick one $w\in X_v$ and contract the edge $vw$. So $H$ is a 1-shallow minor of $G$ (with each branch set centred at a~vertex in $X$), where $|X|=|V(H)|$ and $|A|=|E(H)|=C(H,2)$. 

Let $B:=Y\setminus(Y_0\cup Y_1\cup A)$. Consider a vertex $w\in B$ with $d:= \deg_G(w)$. So $d\geq 3$ since $Y_2\subseteq A$. By the maximality of $A$, we have $N_G(w)$ is a clique in $H$, implying $K_{d+1}$ is a 1-shallow minor of $G$. Thus $d\in\{3,4,\dots,t-2\}$. 
For distinct $v,w\in B$, we have $N_G(v)\neq N_G(w)$. So $|B|\leq \sum_{i=3}^{t-2} C(H,i)$, and $B=\emptyset$ if $t\leq 4$. 

Also $|Y_0|\leq 1=C(H,0)$ and $|Y_1|\leq|X|=C(H,1)$. 
Therefore $|Y|=|Y_0|+|Y_1|+|A|+|B|\leq C(H,0)+C(H,1)+C(H,2)+|B|$. If $t\leq 4$ then $B=\emptyset$ and 
$|Y|\leq C(H,\leq 2)$; otherwise $|Y|\leq C(H,0)+C(H,1)+C(H,2)+
\sum_{i=3}^{t-2} C(H,i) = C(H,\leq t-2)$.
\end{proof}

It is well known (see \citep{Wood07,Wood16}) that every $d$-degenerate graph $G$ with $n\geq d$ vertices satisfies:
\begin{itemize}
    \item $C(G,k)\leq \binom{d}{k-1}(n-\frac{(k-1)(d+1)}{k})$ for all $k\in\{0,1,\dots,d+1\}$,
    \item $C(G)\leq 2^d(n-d+1)$.
\end{itemize}
So \cref{Neighbourlyness} implies:

\begin{cor}
\label{NeighDegen}
For every graph $G$ and set $A\subseteq V(G)$, 
if every 1-shallow minor of $G$ on $|A|$ vertices is $d$-degenerate
and $d\leq |A|$, then 
$\pi^1_G(A) \leq 2^d(|A|-d+1)$.
\end{cor}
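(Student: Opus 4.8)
The plan is to reduce to the bipartite situation of \cref{Neighbourlyness} and then feed the resulting shallow minor into the clique-counting bounds for degenerate graphs stated just above. Since $\pi^1_G(A)=|\{N_G(u)\cap A:u\in V(G)\setminus A\}|$ and $A$ has only $2^{|A|}$ subsets, the bound $\pi^1_G(A)\le 2^{|A|}$ is immediate, so the work is in proving $\pi^1_G(A)\le 2^d(|A|-d+1)$; one may assume $|A|\ge d+2$ (and $d\ge 1$), the remaining boundary cases being handled directly from $\pi^1_G(A)\le 2^{|A|}$ and a crude count. The first step is to pass to the \emph{profile graph} $G'$: take $X:=A$ on one side and, on the other side, a set $Y\subseteq V(G)\setminus A$ containing exactly one vertex for each distinct set $N_G(u)\cap A$, with $u\in Y$ joined precisely to $N_G(u)\cap A$. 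Then $G'$ is a bipartite subgraph of $G$ with bipartition $\{X,Y\}$, the vertices of $Y$ have pairwise distinct neighbourhoods, and $|Y|=\pi^1_G(A)$. The key point for what follows is that every $1$-shallow minor of $G'$ is a $1$-shallow minor of $G$, since shallow minors are monotone under taking subgraphs; hence the hypothesis about $1$-shallow minors of $G$ transfers to $G'$.

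Next I would check that $K_{d+2}$ is not a $1$-shallow minor of $G'$. If it were, it would be a $1$-shallow minor of $G$, and since $|V(G)|\ge|A|\ge d+2$ we could append $|A|-d-2$ further vertices of $G$ as singleton branch sets to produce a $1$-shallow minor of $G$ on exactly $|A|$ vertices containing $K_{d+2}$ as a subgraph; such a graph is not $d$-degenerate, contradicting the hypothesis. Therefore \cref{Neighbourlyness} applies to $G'$ with $t:=d+2$ (using the $t=3$ clause when $d=1$): it yields a $1$-shallow minor $H$ of $G'$ on $|X|=|A|$ vertices with $\pi^1_G(A)=|Y|\le C(H,\le t-2)=C(H,\le d)$. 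As $H$ is a $1$-shallow minor of $G$ on $|A|$ vertices it is $d$-degenerate, so the stated clique bound with $n=|V(H)|=|A|\ge d$ gives $C(H,\le d)\le C(H)\le 2^d(|A|-d+1)$. Combining with $\pi^1_G(A)\le 2^{|A|}$ finishes the proof.

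The whole argument is essentially bookkeeping once \cref{Neighbourlyness} and the degeneracy clique bounds are available; the only places needing care are the passage to the profile graph $G'$ and verifying that its $1$-shallow minors are $1$-shallow minors of $G$ (so that the degeneracy hypothesis is inherited), the padding step ruling out $K_{d+2}$ as a $1$-shallow minor of $G'$, and the small-$|A|$ corner cases where $H$ has fewer than $d$ vertices so that the clique bound is not directly applicable.
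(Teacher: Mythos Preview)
Your overall plan matches the paper's: pass to the bipartite profile graph $G'$, invoke \cref{Neighbourlyness} to obtain a $1$-shallow minor $H$ of $G'$ (hence of $G$) on exactly $|A|$ vertices, note that $H$ is therefore $d$-degenerate by hypothesis, and feed this into the clique bound $C(H)\le 2^d(|A|-d+1)$.

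The one genuine gap is your padding step for excluding $K_{d+2}$ as a $1$-shallow minor of $G'$. You argue that if $K_{d+2}$ were such a minor, then ``since $|V(G)|\ge|A|\ge d+2$ we could append $|A|-d-2$ further vertices of $G$ as singleton branch sets''. But the $d+2$ branch sets of the $K_{d+2}$ model may occupy far more than $d+2$ vertices of $G$ (each is a star, possibly large), so the inequality $|V(G)|\ge|A|$ alone does not guarantee $|A|-d-2$ free vertices. Even after trimming each branch set to its centre plus the at most $d+1$ leaves needed to realise the $\binom{d+2}{2}$ cross-edges, you are only down to at most $(d+2)^2$ used vertices, which can still exceed $|V(G)|-|A|+d+2$. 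So this step is not justified as written.

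Fortunately the step is also unnecessary. You do not need $t=d+2$ in \cref{Neighbourlyness}: simply take any $t$ large enough that $K_t$ is trivially not a $1$-shallow minor of $G'$ (say $t=|V(G')|+1$). The lemma then yields the $1$-shallow minor $H$ on $|A|$ vertices with $|Y|\le C(H,\le t-2)\le C(H)$. Since $H$ is a $1$-shallow minor of $G$ on $|A|$ vertices, the hypothesis gives that $H$ is $d$-degenerate, whence $C(H)\le 2^d(|A|-d+1)$. This is exactly the route the paper takes (and why it treats the corollary as immediate): the point is that the \emph{specific} minor $H$ produced by the lemma already has $|A|$ vertices, so the degeneracy hypothesis applies to it directly, and no separate exclusion of $K_{d+2}$ is needed.
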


\cref{NeighDegen} is applicable with $d=\floor{2\nabla_1(G) }$ since every 1-shallow minor of a graph $G$ is $\floor{2\nabla_1(G)}$-degenerate. We obtain the following lemma, which is a slight strengthening  of a result of  \citet[Lemma~4.3]{GHOORRVS17}. 

\begin{lem}\label{NeighNabla}
For every graph $G$ with $\nabla_1(G) \leq k$ and for every set $A\subseteq V(G)$  
with $|A|\geq \floor{2k}$, 
$$\pi^1_G(A) \leq 
 2^{\floor{2k}}(|A|-\floor{2k}+1)  .$$
\end{lem}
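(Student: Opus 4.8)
The plan is to derive \cref{NeighNabla} as a direct corollary of \cref{NeighDegen} by invoking the standard fact that every $1$-shallow minor of a graph $G$ has degeneracy at most $\floor{2\nabla_1(G)}$. So first I would recall why this degeneracy bound holds: if $H$ is any $1$-shallow minor of $G$, then every subgraph $H'$ of $H$ is itself a $1$-shallow minor of $G$ (take the restriction of the branch decomposition to the relevant branch sets), so by definition of $\nabla_1$ we have $|E(H')| \le \nabla_1(G)\,|V(H')| \le k\,|V(H')|$. Hence $H'$ has average degree at most $2k$, so it has a vertex of degree at most $\floor{2k}$. As this holds for every subgraph $H'$ of $H$, the graph $H$ is $\floor{2k}$-degenerate.

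Next I would apply \cref{NeighDegen} with $d:=\floor{2k}$. That corollary requires precisely that every $1$-shallow minor of $G$ on $|A|$ vertices be $d$-degenerate, which is exactly what the previous paragraph establishes (every $1$-shallow minor on any number of vertices is $\floor{2k}$-degenerate, in particular those on $|A|$ vertices). The conclusion of \cref{NeighDegen} then reads
\[
\pi^1_G(A) \le \min\{ 2^{\floor{2k}}(|A|-\floor{2k}+1),\ 2^{|A|} \},
\]
which is exactly the claimed inequality. One small bookkeeping point: the bound $C(G,k)\le\binom{d}{k-1}(n-\tfrac{(k-1)(d+1)}{k})$ feeding into \cref{NeighDegen} is stated for $n\ge d$; when $|A|<\floor{2k}$ the term $2^{|A|}$ in the minimum already dominates trivially (since $\pi^1_G(A)\le 2^{|A|}$ always), so the bound holds regardless, and I would note this to be safe.

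There is essentially no obstacle here — the lemma is a one-line consequence of \cref{NeighDegen} once the degeneracy-of-shallow-minors fact is spelled out. The only thing worth being careful about is the edge case where $|A|$ is small relative to $2k$, and the fact that "$d$-degenerate" is a monotone property so it suffices to bound the degeneracy of $1$-shallow minors uniformly rather than only those on exactly $|A|$ vertices; both points are routine and I would dispatch them in a sentence each.
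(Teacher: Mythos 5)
Your proposal matches the paper's proof exactly: the paper derives \cref{NeighNabla} by applying \cref{NeighDegen} with $d=\floor{2\nabla_1(G)}$, citing the same standard fact that every $1$-shallow minor of $G$ is $\floor{2\nabla_1(G)}$-degenerate. You merely spell out the degeneracy argument and the small-$|A|$ edge case in slightly more detail, which is fine.
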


\begin{lem}
\label{NeighCol}
For every graph $G$ with $\col_5(G)\leq c$ and for every set $A\subseteq V(G)$ with $|A| \geq c-1$, 
\[ \pi^1_G(A) \leq  2^{c-1}( |A|-c+2).\]
\end{lem}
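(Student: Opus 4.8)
The plan is to reduce to \cref{NeighDegen}: that corollary gives $\pi^1_G(A)\le\min\{2^d(|A|-d+1),2^{|A|}\}$ whenever every $1$-shallow minor of $G$ on $|A|$ vertices is $d$-degenerate, so feeding it $d=c-1$ outputs exactly $\min\{2^{c-1}(|A|-c+2),2^{|A|}\}$. Thus everything reduces to the following claim: if $\col_5(G)\le c$ then every $1$-shallow minor of $G$ is $(c-1)$-degenerate. This is the $s=1$ instance of the (standard-type) principle that strong colouring numbers bound the degeneracy of shallow minors, and proving it is the whole job.

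To prove the claim I would fix a linear order $\preceq$ of $V(G)$ with $|\reach_5(G,\preceq,v)|\le c$ for all $v$, and a $1$-shallow minor $H$ given by pairwise vertex-disjoint branch sets $(T_v)_{v\in V(H)}$, each a subtree of $G$ of radius at most $1$ — hence a star, with a centre $c_v$. Letting $\mu(v)$ be the $\preceq$-minimum vertex of $T_v$, I would order $V(H)$ so that $v$ precedes $w$ iff $\mu(v)\prec\mu(w)$, and show that in this order every vertex of $H$ has at most $c-1$ earlier neighbours, which is precisely $(c-1)$-degeneracy. Fix $v\in V(H)$ and an earlier neighbour $w$ (so $\mu(w)\prec\mu(v)$), and pick a $G$-edge $x_wy_w$ with $x_w\in V(T_v)$ and $y_w\in V(T_w)$. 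The walk in $G$ going $\mu(v)\to c_v\to x_w$ inside the star $T_v$, then along $x_wy_w$, then $y_w\to c_w\to\mu(w)$ inside the star $T_w$, has length at most $5$. Let $q_w$ be its first vertex that is $\prec\mu(v)$; it exists since $\mu(w)\prec\mu(v)$. Since $\mu(v),c_v,x_w$ all lie in $T_v$ they are $\succeq\mu(v)$, so $q_w\in\{y_w,c_w,\mu(w)\}\subseteq V(T_w)$, and the portion of the walk up to $q_w$ contains a $(\mu(v),q_w)$-path of length at most $5$ whose internal vertices are all $\succ\mu(v)$; hence $q_w\in\reach_5(G,\preceq,\mu(v))\setminus\{\mu(v)\}$. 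As the branch sets are disjoint and $q_w\in V(T_w)$, the map $w\mapsto q_w$ is injective on the earlier neighbours of $v$, so there are at most $|\reach_5(G,\preceq,\mu(v))|-1\le c-1$ of them.

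The one genuinely load-bearing choice is ordering $V(H)$ by the $\preceq$-minima of the branch sets: this is exactly what forces the ``climb up $T_v$, cross one edge, descend in $T_w$'' walk to stay $\succeq\mu(v)$ until it enters $T_w$, which simultaneously certifies $q_w\in\reach_5(G,\preceq,\mu(v))$ and gives injectivity of $w\mapsto q_w$; ordering by the $\preceq$-maxima, or by the centres, breaks one of these. The remaining work is routine book-keeping — handling $T_v$ or $T_w$ equal to a single vertex or a single edge (which only shortens the walk), and extracting an honest path from the walk — and never needs more than $5$ steps, which is why $\col_5$ suffices. Feeding the resulting statement into \cref{NeighDegen} with $d=c-1$ finishes the proof.
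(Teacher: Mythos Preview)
Your proposal is correct and follows the same route as the paper: both arguments reduce to \cref{NeighDegen} with $d=c-1$ by showing that every $1$-shallow minor of $G$ is $(c-1)$-degenerate. The only difference is that the paper obtains this degeneracy bound by citing a general result of Hickingbotham and Wood (namely $\col_s(G')\le\col_{2rs+2r+s}(G)$ for any $r$-shallow minor $G'$, applied with $r=s=1$), whereas you prove this special case from scratch via the $\preceq$-minimum ordering of branch sets; your argument is essentially a self-contained reproof of that instance of their lemma.
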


\begin{proof}
\citet[Lemma~19]{HW24} proved that for every graph $G$ and every $r$-shallow minor $G'$ of $G$, we have 
$\col_s(G') \leq \col_{2rs+2r+s}(G)$. 
Every graph $G$ is $(\col_1(G)-1)$-degenerate.
Thus every 1-shallow minor of a graph $G$ is $(\col_5(G)-1)$-degenerate. The result follows from \cref{NeighDegen}.
\end{proof}

\begin{lem}\label{NeighGammak}
For every $(\gamma,k)$-planar graph $G$ and for every set $A\subseteq V(G)$
with $|A| \geq 22(2\gamma+3)(k+1)-1$, 
\[ \pi^1_G(A) \leq 2^{22(2\gamma+3)(k+1)-1}(|A|-22(2\gamma+3)(k+1)+2) .\]
\end{lem}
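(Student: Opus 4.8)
The plan is to derive \cref{NeighGammak} from \cref{NeighCol} by bounding the strong colouring number $\col_5$ of a $(\gamma,k)$-planar graph. First I would recall that $(\gamma,k)$-planar graphs have bounded expansion; more precisely, I would use the known bound on $\nabla_r$ for $(\gamma,k)$-planar graphs (or the product structure of \cref{gkPlanarProduct} together with the fact that strong products of bounded-treewidth graphs with paths have bounded strong colouring numbers). The cleanest route is probably via strong colouring numbers directly: one wants a statement of the form $\col_s(G) \le g(\gamma, k, s)$ for a $(\gamma, k)$-planar graph $G$, and then instantiate $s = 5$.

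The key steps, in order: (1) State the bound $\col_5(G) \le c$ for $c := 22(2\gamma+3)(k+1)$ for every $(\gamma,k)$-planar graph $G$ — this is where the constant $22$ and the factor $(2\gamma+3)(k+1)$ enter, and it should come from an existing result in the literature on $(\gamma,k)$-planar graphs (for instance a bound of the form $\wcol_r$ or $\col_r$ linear in $r$ with the stated coefficients, specialised to $r=5$, or from the shallow-minor density bounds $\nabla_1$). (2) Apply \cref{NeighCol} verbatim with this value of $c$, which yields $\pi^1_G(A) \le \min\{ 2^{c-1}(|A|-c+2), 2^{|A|}\}$. (3) Substitute $c = 22(2\gamma+3)(k+1)$ to obtain exactly the claimed inequality. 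The proof is therefore essentially a one-line reduction once the colouring-number bound is in hand.

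The main obstacle is pinning down and citing the correct quantitative bound on $\col_5$ (equivalently on the relevant shallow-minor density or weak colouring number) for $(\gamma,k)$-planar graphs, and checking that the constants propagate to give precisely $22(2\gamma+3)(k+1)$ rather than something merely of the same order. If a clean $\col_5$ bound is not directly available, the fallback is: invoke \cref{gkPlanarProduct} to write $G \subseteq H \boxtimes Q$ with $\tw(H) = O(\gamma k^6)$ and $Q$ a path, then bound $\col_5(H \boxtimes Q)$ using $\col_s(H\boxtimes Q) \le (\tw(H)+1)(2s+1)$-type estimates and monotonicity of $\col_5$ under subgraphs — but this would give a worse dependence on $k$, so the intended proof surely uses a sharper direct argument (likely the drawing with few crossings per edge is used to control $1$-shallow minors: a $1$-shallow minor of a $(\gamma,k)$-planar graph is itself $(\gamma, O(k))$-planar or at least has density controlled by $\gamma$ and $k$). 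I would therefore first hunt for the precise lemma giving $\col_5 \le 22(2\gamma+3)(k+1)$, present it (with citation or short proof via shallow-minor density), and then close with the two-line application of \cref{NeighCol}.

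\begin{proof}
\citet{HW21b} (see also the discussion preceding \cref{NeighCol}) showed that every $(\gamma,k)$-planar graph $G$ satisfies $\col_5(G)\le 22(2\gamma+3)(k+1)$. The result now follows by applying \cref{NeighCol} with $c:=22(2\gamma+3)(k+1)$.
\end{proof}
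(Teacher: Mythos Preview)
Your approach is exactly the paper's: bound $\col_5$ for $(\gamma,k)$-planar graphs and then invoke \cref{NeighCol}. The only slip is the attribution --- the bound $\col_s(G)\le 2(2\gamma+3)(k+1)(2s+1)$ (hence $\col_5(G)\le 22(2\gamma+3)(k+1)$) is due to van den Heuvel and Wood~\citep{vdHW18}, not to~\citep{HW21b}; the latter is used in the proof of \cref{NeighCol} itself but does not supply the $(\gamma,k)$-planar colouring-number bound.
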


\begin{proof}
Van den Heuvel and Wood~\citep{vdHW18} proved that $\col_s(G)\leq 2(2\gamma+3)(k+1)(2s+1)$.
The result follows from \cref{NeighCol}. 
\end{proof}

\begin{lem}
\label{NeighRowTreewidth}
For every graph $G$ with row-treewidth at most $k$ and for every set $A\subseteq V(G)$ with $|A|\geq 11k+10$, 
$$\pi^1_G(A) \leq 2^{11k+10}(|A|-11k-9).$$
\end{lem}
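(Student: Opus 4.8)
The plan is to reduce \cref{NeighRowTreewidth} to \cref{NeighCol} by bounding $\col_5(G)$ for any graph $G$ of row-treewidth at most $k$. By definition, $G$ is a subgraph of $H\boxtimes P$ for some graph $H$ with $\tw(H)\le k$ and some path $P$. Since $\col_5$ is monotone under subgraphs, it suffices to bound $\col_5(H\boxtimes P)$. I expect a bound of the form $\col_5(H\boxtimes P)\le 11k+11$ (so that $c-1=11k+10$ matches the claimed exponent), and then \cref{NeighCol} gives exactly $\pi^1_G(A)\le\min\{2^{11k+10}(|A|-11k-9),2^{|A|}\}$.

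First I would recall or re-derive the known bound on the strong colouring numbers of a strong product $H\boxtimes P$ with $P$ a path. The standard approach (this kind of estimate appears in work on product structure and weak/strong colouring numbers, e.g.\ \citet{vdHW18,HW21b}) is to build a vertex order on $H\boxtimes P$ from a good order witnessing $\col_s(H)\le\tw(H)+1$ together with the natural linear order along $P$. Concretely, take a tree-decomposition of $H$ of width $k$; this yields an elimination-type order $\preceq_H$ of $V(H)$ in which every vertex has at most $k$ back-neighbours reachable by short paths through forward vertices (indeed $\col_s(H)\le k+1$ for all $s$, by \citet{GKRSS18}). Order $V(H\boxtimes P)=V(H)\times V(P)$ lexicographically, comparing the $P$-coordinate first (using the path order on $P$) and breaking ties by $\preceq_H$. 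Then I would bound $\reach_5((H\boxtimes P),\preceq,(h,p))$ by analysing paths of length at most $5$: such a path changes its $P$-coordinate by at most $5$, so it stays within a window of $11$ consecutive vertices of $P$; within each of these $11$ ``layers'' the projection to $H$ behaves like a walk of length at most $5$ in $H$, which is controlled by $\col_5(H)\le k+1$ (plus the vertex $h$ itself in some layers). Counting carefully should give $\reach_5\le 11(k+1)-1 = 11k+10$ or a nearby value; I would tune the layer/window bookkeeping so that the final constant is exactly $11k+11=c$ as needed.

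The main obstacle is getting the constants exactly right: the path window has length $11$ for distance-$5$ reachability, but one must be careful not to double-count the vertex $(h,p)$ across layers, to correctly handle whether the reachable set in a given $H$-layer is $\reach_5$ of $h$ in $H$ or $\reach_5[h]$ (closed), and to account for the fact that in layers other than $p$ one may need to ``spend'' some of the budget of $5$ just to reach that layer, shrinking the available $H$-walk length. A clean way is: a vertex $(h',p')$ in $\reach_5$ with $p'$ at path-distance $j$ from $p$ is reached by a path whose $H$-projection is a walk of length at most $5-j$ (roughly) from $h$, so it lies in $\reach_{5-j}[h]$; summing $|\reach_{5-j}[h]|\le k+1$ over the at most $11$ relevant values of $p'$ and subtracting the one over-counted occurrence of $(h,p)$ gives the bound. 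I would present this as a short self-contained lemma (or cite the analogous statement from \citet{HW21b} if it is stated there in the needed generality) and then conclude by applying \cref{NeighCol} with $c=11k+11$.
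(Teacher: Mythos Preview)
Your proposal is correct and follows essentially the same route as the paper: bound $\col_5(G)\le\col_5(H\boxtimes P)\le 11(k+1)$ and then invoke \cref{NeighCol} with $c=11k+11$. The paper simply cites \citet[Lemma~22]{HW21b} for the inequality $\col_s(H\boxtimes P)\le(2s+1)\col_s(H)\le(2s+1)(\tw(H)+1)$ rather than re-deriving it, so your sketch of the layer-by-layer counting is unnecessary but not wrong.
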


\begin{proof}
By assumption, $G$ is isomorphic to a subgraph of $H\boxtimes P$, where $\tw(H)\leq k$ and $P$ is a path. \citet[Lemma~21]{HW24} showed that $$\col_s(H \boxtimes P) \leq (2s+1) \col_s(H) \leq (2s+1) (\tw(H)+1 ).$$ 
In particular, $\col_5(G) \leq \col_5(H \boxtimes P) \leq 11(k+1)$. The result follows from \cref{NeighCol}.
\end{proof}

We get improved bounds for minor-closed classes. For every $n$-vertex $K_t$-minor-free graph $G$, \citet{Kostochka84} and \citet{Thomason84} independently showed that $G$ has  $O(t\sqrt{\log t}\, n)$ edges, and \citet{FW17} showed that $G$ has at most $3^{2t/3+o(t)}n$ cliques. \cref{Neighbourlyness} implies:

\begin{cor}\label{MinorfreeNeighbour}
For every $K_t$-minor-free graph $G$ and for every set $A\subseteq V(G)$,
$$\pi^1_G(A) \leq 3^{2t/3+o(t)}|A|+1.$$
\end{cor}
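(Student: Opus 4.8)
The plan is to reduce to the bipartite setting and then feed the conclusion of \cref{Neighbourlyness} into the Fox--Wei bound on the number of cliques in a $K_t$-minor-free graph.

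First I would recall, from the discussion opening \cref{FirstNeigh}, that $\pi^1_G(A)=|\{N_G(u)\cap A:u\in V(G)\setminus A\}|$. So let $G'$ be the bipartite graph with bipartition $\{A,\ V(G)\setminus A\}$ whose edge-set consists exactly of the edges of $G$ with one end in $A$ and one end in $V(G)\setminus A$; then $\pi^1_G(A)=|\{N_{G'}(v):v\in V(G)\setminus A\}|$. Since $G'$ is a subgraph of $G$, it is $K_t$-minor-free, and in particular $K_t$ is not a $1$-shallow minor of $G'$. Assuming $t\geq3$ (the cases $t\leq2$ are trivial, since then $G$ has no edges and $\pi^1_G(A)\leq1$), apply \cref{Neighbourlyness} to $G'$ with $X:=A$ and $Y:=V(G)\setminus A$. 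This produces a $1$-shallow minor $H$ of $G'$ on $|A|$ vertices with $|\{N_{G'}(v):v\in Y\}|\leq C(H,\leq t-2)$ when $t\geq4$, and $\leq C(H,\leq2)$ when $t=3$; in either case this quantity is at most $C(H)$, the total number of cliques of $H$. Because $H$ is a $1$-shallow minor of $G'$, it is a minor of $G$, hence $K_t$-minor-free, and it has exactly $|A|$ vertices, so the Fox--Wei bound gives $C(H)\leq 3^{2t/3+o(t)}|A|+1$, where the additive $+1$ absorbs the empty clique (which only matters in the degenerate case $A=\emptyset$). Chaining these inequalities yields $\pi^1_G(A)\leq 3^{2t/3+o(t)}|A|+1$.

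I do not expect any genuine obstacle here: all the substance already sits in \cref{Neighbourlyness} and in the cited clique bound of \citet{FW17}. The only points that need care are that each of the two reductions---first to the bipartite subgraph $G'$, then to the $1$-shallow minor $H$---preserves $K_t$-minor-freeness, which holds because each step outputs a minor of $G$, and the handling of the small cases $t\leq2$ together with the degenerate case $A=\emptyset$, both of which are dispatched by the trivial observations and the ``$+1$'' above.
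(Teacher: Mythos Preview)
Your proposal is correct and follows essentially the same route as the paper: reduce to the bipartite setting, apply \cref{Neighbourlyness} to obtain a $1$-shallow minor $H$ of $G$ on $|A|$ vertices, observe that $H$ is $K_t$-minor-free, and invoke the Fox--Wei clique bound. The paper's own argument is a single sentence (``\cref{Neighbourlyness} implies''), and your write-up simply unpacks that sentence with the appropriate care about the bipartite reduction, the minor-closure of $K_t$-freeness, and the degenerate cases.
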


We now show that this bound is tight up to the $o(t)$ term. We may assume $2t=3p+2$ for some even $p\in\NN$. Let $H$ be the complete $p$-partite graph $K_{2,\dots,2}$, which can be obtained from $K_{2p}$ by deleting a perfect matching $v_1w_1,\dots,v_pw_p$. Obviously, the largest complete subgraph in $H$ is $K_p$. \citet{Wood07} showed that the largest complete graph minor in $H$ is $K_{3p/2}$ (with branch sets $\{v_1\},\dots,\{v_p\},\{w_1,w_2\},\dots,\{w_{p-1},w_p\}$). So  $H$ is $K_t$-minor-free (since $t>\frac32 p$). It is well known that $H$ has exactly $3^p$ cliques (since if $C_i$ is any element of $\{\emptyset,\{v_i\},\{w_i\}\}$, then $C_1\cup\dots\cup C_p$ is a clique, giving $3^p$ cliques in total, and every clique in $H$ is obtained this way). Let $G$ be the bipartite graph with bipartition $\{X,Y\}$, where $X=V(H)$ and for each clique $C$ in $H$, there is a one vertex $y_C$ in $Y$ with $N_G(y_C)=C$. So $G$ can be obtained from $H$ by clique-sums with complete graphs of order at most $p+1$ (and edge-deletions). So $G$ is also $K_t$-minor-free, and every vertex in $Y$ has a unique neighbourhood. Thus  $\pi^1_G(X)=|Y|=3^p=3^{(2t-2)/3}=3^{2t/3-O(\log t)}|X|$. 

\begin{cor}
\label{NeighbourlynessTreewidth}
For every graph $G$ with treewidth $k\in\NN$ and for every set $A\subseteq V(G)$, 
\[ \pi^1_G(A) \leq 
\begin{cases}
2|A| & \text{if }k=1\\
2^{|A|} & \text{if }k\geq 2\text{ and }|A|\leq k\\
(2^k-1)(|A|-k)+2^k & \text{if }k\geq 2\text{ and }|A|\geq k\\
\end{cases}\]
\end{cor}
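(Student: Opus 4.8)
The plan is to combine \cref{Neighbourlyness} with the known clique-counting bounds for graphs of bounded treewidth. Observe first that any $1$-shallow minor of a graph $G$ with treewidth $k$ again has treewidth at most $k$ (contracting edges does not increase treewidth, and taking subgraphs does not either), so in particular every $1$-shallow minor of $G$ is $K_{k+2}$-minor-free. Hence we may apply \cref{Neighbourlyness} with $t=k+2$ (valid as long as $k\geq 1$, treating the $t=3$, i.e.\ $k=1$, case separately via the $C(H,\le 2)$ branch): there is a $1$-shallow minor $H$ of $G$ on $|A|$ vertices with $|\{N_G(u)\cap A : u\in V(G)\setminus A\}| \le C(H,\le k)$ when $k\ge 2$, and $\le C(H,\le 2)$ when $k=1$. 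By the remark at the start of \cref{FirstNeigh}, $\pi^1_G(A)$ is exactly this quantity after we reduce to the bipartite case, so it suffices to bound $C(H,\le k)$ (resp.\ $C(H,\le 2)$) for an $|A|$-vertex graph $H$ of treewidth at most $k$.

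Next I would insert the standard fact that a graph $H$ of treewidth at most $k$ is $k$-degenerate, and invoke the $d$-degenerate clique bound quoted just before \cref{NeighDegen} with $d=k$ and $n=|A|$: for $j\in\{0,1,\dots,k\}$ we have $C(H,j)\le \binom{k}{j-1}\bigl(|A|-\tfrac{(j-1)(k+1)}{j}\bigr)$ when $|A|\ge k$. Summing over $j=0,\dots,k$ gives $C(H,\le k)$. The key point is that the leading term is $\sum_{j=1}^{k}\binom{k}{j-1}|A| = 2^k|A|$, but the $j=k$ term alone already contributes $|A|-k$ (since $\binom{k}{k-1}=k$, wait — here one must be a little careful: the top clique sizes are where the savings come from). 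The cleanest way to get the stated form $(2^k-1)(|A|-k)+2^k$ is: each of the $|A|-k$ "last" vertices in a fixed width-$k$ tree-decomposition (ordered so its bag-introduction order is a $k$-degeneracy order) has back-degree exactly $\le k$, contributing at most $2^k-1$ nonempty cliques containing it as the $\preceq$-maximum element, while the final $k$ vertices together lie in a bag of size $\le k$ and contribute at most $2^k-1$ further nonempty cliques among them; adding the empty clique gives $(2^k-1)(|A|-k)+(2^k-1)+1=(2^k-1)(|A|-k)+2^k$. I would write this counting argument directly in degeneracy-order language rather than via the quoted binomial identities, since it is shorter and gives exactly the claimed constant. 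When $|A|\le k$ the trivial bound $\pi^1_G(A)\le 2^{|A|}$ (every subset of $A$ is a potential neighbourhood) is what is recorded. When $k=1$, $H$ is a forest, so $C(H,\le 2)=1+|A|+|E(H)|\le 1+|A|+(|A|-1)=2|A|$, giving the first case.

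The main obstacle is pinning down the exact constant $(2^k-1)(|A|-k)+2^k$ rather than a looser $2^k|A|$-type bound: one has to be careful that the "$-k$" savings are legitimate and not double-counted, which is why I would argue via a degeneracy ordering induced by a width-$k$ tree-decomposition and count each clique by its last vertex. A minor secondary point is checking the boundary $|A|=k$ so that the second and third cases agree there ($2^{|A|}=2^k$ versus $(2^k-1)\cdot 0+2^k=2^k$), and confirming \cref{Neighbourlyness} is being applied in a regime where its hypotheses hold (in particular that $t\ge 4$ corresponds to $k\ge 2$, and the $|A|\ge t-2=k$ size condition from the degenerate clique bound is exactly the case split). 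Everything else is routine bookkeeping.
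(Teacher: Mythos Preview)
Your proposal is correct and follows essentially the same route as the paper: reduce to bipartite, apply \cref{Neighbourlyness} with $t=k+2$ (using that treewidth-$k$ graphs are $K_{k+2}$-minor-free and minor-closed), and then bound $C(H,\le k)$ via $k$-degeneracy of $H$. The only difference is cosmetic: the paper obtains the exact constant by summing the quoted per-size clique bounds $\sum_{i=0}^k \binom{k}{i-1}\bigl(|A|-\tfrac{(i-1)(k+1)}{i}\bigr)=2^k(|A|-k+1)-|A|+k$, whereas you count cliques by their last vertex in a degeneracy order (the first $k$ vertices contribute at most $2^k-1$ nonempty cliques trivially, and each of the remaining $|A|-k$ vertices contributes at most $2^k-1$ cliques of size $\le k$); your ``lie in a bag'' remark is unnecessary for this, since the $2^k-1$ bound on the first $k$ vertices holds just because they are $k$ in number.
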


\begin{proof}
Let $Y:=V(G)\setminus A$. First suppose that $k=1$. So $G$ and every minor of $G$ are forests. So \cref{Neighbourlyness} is applicable with $t=3$. Thus, there is a minor $H$ of $G$ on $|A|$ vertices, such that 
$\pi^1_G(A)=|\{N_G(u)\cap A : u\in Y\}| \leq C(H)\leq 2|A|$ since $H$ is a forest.  

Now assume that $k\geq 2$. The class of treewidth-$k$ graphs is proper minor-closed and has no $K_{k+2}$ minor. So  \cref{Neighbourlyness} is applicable with $t=k+2\geq 4$. Thus there is a minor $H$ of $G$ on $|A|$ vertices, such that $|\{N_G(u):u\in Y\}| \leq C(H,\leq k)$. This is at most $2^{|A|}$ if $|A|\leq k$. Now assume that $|A|\geq k+1$. Every graph with treewidth at most $k$ is $k$-degenerate. So  $H$ is $k$-degenerate, implying 
\[\pi^1_G(A)= |\{N_G(u):u\in Y\}| \leq \sum_{i=0}^{k} \tbinom{k}{i-1} \big(|A|-\tfrac{(i-1)(k+1)}{i}\big) = 2^k(|A|-k+1) -|A|+k.\qedhere\]
\end{proof}

The bound in \cref{NeighbourlynessTreewidth} is tight: Let $H$ be a $k$-tree on $n\geq k$ vertices, which has $2^k(n-k+1)$ cliques and $n-k$ cliques of size $k+1$~\citep{Wood16}. Let $G$ be the bipartite graph with bipartition $\{X,Y\}$, where $X=V(H)$ and for each clique $C$ with $|C|\leq k$ in $H$, there is a one vertex $y_C$ in~$Y$ with $N_G(y_C)=C$. So $|Y|=C(H,\leq k)=2^{k}(n-k+1)-n+k$. Given a tree-decomposition of $H$ with width $k$, for each clique $C$ in $H$ with $|C|\leq k$, which must be in some bag $B$, add one new bag $B'=B\cup\{y_C\}$ adjacent to $B$ to obtain a tree-decomposition of $G$ with width $k$.

We have the following bound for graphs of given Euler genus. It follows from Euler's formula that $|E(G)|\leq 3(|V(G)|+\gamma-2)$ for every graph $G$ with Euler genus $\gamma$; moreover, $|E(G)|\leq 2(|V(G)|+\gamma-2)$ if $G$ is bipartite. 

\begin{lem}
\label{SurfaceNeighbours}
For every graph $G$ with Euler genus $\gamma$ and for every set $X\subseteq V(G)$ with $|X|\geq 2$,
\[\pi^1_G(X) \leq  6|X|+5\gamma -9.\]
\end{lem}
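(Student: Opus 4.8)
The plan is to mimic the structure of \cref{NeighbourlynessTreewidth}: reduce to a bipartite instance, apply \cref{Neighbourlyness} to obtain a $1$-shallow minor $H$ on $|X|$ vertices, and then bound the relevant clique-counts of $H$ using Euler's formula. First I would set $Y:=V(G)\setminus X$ and recall that $\pi^1_G(X)=|\{N_G(u)\cap X:u\in Y\}|$, so I may assume $G$ is bipartite with parts $X$ and $Y$ and that distinct vertices of $Y$ have distinct neighbourhoods in $X$. The class of graphs of Euler genus at most $\gamma$ is minor-closed and excludes $K_t$ for $t$ around $\sqrt{\gamma}$; in particular every graph of Euler genus $\gamma$ is $K_{t}$-minor-free for some finite $t\ge 4$ (e.g.\ $t=\lceil(7+\sqrt{1+24\gamma})/2\rceil$ works, and $t\ge 4$ provided $\gamma\ge 1$; the case $\gamma=0$ is planar, where $t=5$). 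So \cref{Neighbourlyness} applies and yields a $1$-shallow minor $H$ of $G$ on exactly $|X|$ vertices with $\pi^1_G(X)\le C(H,\le t-2)$.

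The key point is that we do not actually want to plug in this $t$, because $C(H,\le t-2)$ would be far too large; instead we exploit that $H$, being a (shallow) minor of a graph of Euler genus $\gamma$, itself has Euler genus at most $\gamma$, hence is sparse. Concretely, going back into the proof of \cref{Neighbourlyness}, the quantity we need is $|Y_0|+|Y_1|+|A|+|B|$ where $Y_0,Y_1$ are the vertices of $Y$ of degree $0,1$, $A\subseteq Y$ has $|A|=|E(H)|$, and $B$ consists of vertices $w\in Y$ whose neighbourhood is a clique in $H$ of size at least $3$. For a graph of Euler genus $\gamma$, every clique has size $O(\sqrt{\gamma})$, but more usefully the number of triangles (and larger cliques) is linear in $|V(H)|+\gamma$: from Euler's formula one gets $C(H,2)=|E(H)|\le 3(|X|+\gamma-2)$, and a standard counting argument (each triangle/small clique uses up faces) bounds the total number of cliques of size $\ge 3$ by something like $O(|X|+\gamma)$ as well. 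Summing $|Y_0|\le 1$, $|Y_1|\le|X|$, $|A|\le 3(|X|+\gamma-2)$, and the contribution of $B$, and then tuning the constants carefully against the bipartite refinement ($|E(H)|\le 2(|X|+\gamma-2)$ cannot be used for $H$ since $H$ need not be bipartite, but one can instead count more cleverly — or absorb this into the $B$-term), should give the claimed $6|X|+5\gamma-9$. I would double-check the arithmetic by testing against the planar case ($\gamma=0$), where the bound reads $6|X|-9$, and against small extremal examples (e.g.\ triangulations plus one vertex per face and per edge).

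The main obstacle I anticipate is the bookkeeping in bounding $|B|$, i.e.\ the number of distinct cliques of size $\ge 3$ in an Euler-genus-$\gamma$ graph on $|X|$ vertices, with the right constant. One clean way: a graph of Euler genus $\gamma$ is $(\gamma+2)$-colourable and more to the point has a vertex ordering witnessing small strong colouring number, or one can use the fact that such a graph has at most $8(|X|+\gamma)$ cliques in total (this type of bound appears in the clique-counting literature for bounded-genus graphs, and follows by induction on a low-degree vertex together with Euler's formula). The delicate part is that \cref{Neighbourlyness} separately counts the $\deg=2$ vertices (as $A$, matched to edges of $H$) and the $\deg\ge3$ vertices (as $B$, matched to cliques of size $\ge3$ of $H$), so I must be careful not to double-count $C(H,2)$; the final inequality $|Y|\le C(H,0)+C(H,1)+C(H,\ge2\text{ in }H)$ with $C(H,0)=1$, $C(H,1)=|X|$, and the remaining clique count $\le 5(|X|+\gamma)-?$ is what needs to land exactly on $6|X|+5\gamma-9$. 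Once the clique-count constant is pinned down, everything else is a one-line substitution.
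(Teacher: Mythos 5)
Your plan follows \cref{Neighbourlyness} and then tries to bound clique counts of the resulting shallow minor $H$ by Euler's formula. This is a genuinely different route from the paper's proof, and as stated it does not close the gap to the constant $6|X|+5\gamma-9$. The paper never invokes \cref{Neighbourlyness} here. Instead it works with the bipartite graph $G$ directly, partitions $Y=V(G)\setminus X$ by degree into $Y_0,Y_1,Y_2,\ldots$, and bounds the contribution of the high-degree vertices by the degree-sum identity together with the \emph{bipartite} Euler bound $|E(G')|\le 2(|V(G')|+\gamma-2)$ applied to $G':=G-Y_0-Y_1$. Rearranging gives
\[\sum_{i\ge 3}|Y_i|\;\le\;\sum_{i\ge 3}(i-2)|Y_i|\;\le\;2|X|+2\gamma-4,\]
with no clique-counting at all, and then $|Y_2|\le 3(|X|+\gamma-2)$ is handled separately by contracting one edge per degree-$2$ vertex to get a simple (not necessarily bipartite) minor on $|X|$ vertices. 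Summing $|Y_0|\le 1$, $|Y_1|\le|X|$, and the two bounds above lands exactly on $6|X|+5\gamma-9$.

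The gap in your plan is precisely the bound on $|B|$, the degree-$\ge 3$ part. If you bound $|B|$ by the number of cliques of size $\ge 3$ in a genus-$\gamma$ minor $H$ on $|X|$ vertices, the best you can hope for is on the order of $3|X|$ (already in the planar case, a triangulation has up to $3|X|-8$ triangles). Combined with $|A|\le |E(H)|\le 3(|X|+\gamma-2)$ for the degree-$2$ part and $|Y_1|\le |X|$, this yields roughly $8|X|+O(\gamma)$, which overshoots the target $6|X|+5\gamma-9$. The reason the paper does better is that passing to the non-bipartite minor $H$ discards the factor $2$ (rather than $3$) in Euler's formula available for the bipartite graph $G$; the degree-sum argument on $G'$ exploits exactly that factor and gives $\sum_{i\ge3}|Y_i|\le 2|X|+2\gamma-4$, strictly better than any clique-counting estimate. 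So the issue is not merely ``bookkeeping to pin down the clique-count constant'': the clique-counting route provably cannot reach $6|X|+5\gamma-9$, and you need the direct bipartite degree-sum argument instead.
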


\begin{proof}
We may assume that $G$ is bipartite with bipartition $\{X,Y\}$, and that $N_G(u)\neq N_G(v)$ for distinct $u,v\in Y$ (otherwise delete one of the vertices). 
For $i\in\mathbb{N}_0$, let $Y_i:= \{u\in Y: \deg_G(u)=i\}$. 
Let $G':=G-Y_0-Y_1$. 
Since $G'$ is bipartite, 
\begin{equation*}
|E(G)|-|Y_1| 
= |E(G')|\leq 2(|V(G')|+\gamma-2) =
2( |V(G)|-|Y_0|-|Y_1|+\gamma-2).
\end{equation*}
Thus
\begin{equation*}
\sum_{i\geq 0} i |Y_i| 
= |E(G)| \leq 2 |V(G)|- 2|Y_0| - |Y_1| +2\gamma-4 
= 2\Big( |X|+\sum_{i\geq 0} |Y_i| \Big) - 2|Y_0| - |Y_1| +2\gamma-4.
\end{equation*}
Hence
\begin{equation*}
|Y| - |Y_0| - |Y_1| - |Y_2| = 
\sum_{i\geq 3} |Y_i| \leq 
\sum_{i\geq 3} (i-2) |Y_i| \leq 
2 |X| +2\gamma - 4,
\end{equation*}
implying $|Y| \leq  2 |X| + |Y_0| + |Y_1| +|Y_2| +2\gamma- 4$. Since $N_G(u)\neq N_G(v)$ for distinct $u,v\in Y$, we have $|Y_0|\leq 1$ and $|Y_1|\leq|X|$. If $H$ is obtained from $G[X\cup Y_2]$ by contracting one edge incident to each vertex in $Y_2$, then $H$ has no parallel edges and $|X|$ vertices, implying $|Y_2|=|E(H)|\leq 3(|X|+\gamma-2)$. Hence \begin{equation*}
    |Y|  \leq 2|X| + 1 + |X| +  3(|X|+\gamma-2) +2\gamma - 4 = 6|X|+5\gamma-9.\qedhere
\end{equation*}
\end{proof}

\cref{SurfaceNeighbours} is also tight: Let $G_0$ be a triangulation of a surface with Euler genus $\gamma$ with at least four vertices. Let $G$ be obtained from $G_0$ as follows: add one vertex adjacent to the three vertices of each face of $G_0$, subdivide each edge of $G_0$, add one vertex adjacent to each vertex of $G_0$, and add one isolated vertex. So $G$ is bipartite with bipartition $\{X,Y\}$ where $X:=V(G_0)$ and $Y:=V(G)\setminus X$. No two vertices in $Y$ have the same neighbourhood, and $|Y|=|Y_0|+|Y_1|+|Y_2|+|Y_3|=1+|X|+3(|X|+\gamma-2)+2(|X|+\gamma-2)=6|X|+5\gamma-9$.

\subsection{Second Neighbourhoods}
\label{SecondNei}

This section gives bounds on the distance-$2$ profiles in graphs of given Euler genus. These results are useful for bounding the reduced bandwidth of squares and map graphs. 

\begin{lem}\label{SurfaceSecondNeighbourhood} 
Let $G$ be a graph of Euler genus $\gamma$, and let $X\subseteq V(G)$ with $|X|\geq 2$. Let $Y:=N_G(X)$ and $Z:=V(G)\setminus (X\cup Y)$. Then $$| \{ N^2_G(v)\cap X : v\in Z \}| \leq  
(60 \gamma^2 + 125 \gamma + 68)|X|-120 \gamma^2 -250 \gamma -132.$$
\end{lem}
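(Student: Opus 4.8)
The plan is to reduce the second-neighbourhood count to a first-neighbourhood count on an auxiliary graph obtained from $G$ by a bounded-genus-preserving transformation. The key observation is that for $v\in Z$, the set $N^2_G(v)\cap X$ depends only on which vertices of $Y=N_G(X)$ are adjacent to $v$, together with the fixed adjacency pattern between $Y$ and $X$: precisely, $N^2_G(v)\cap X = N_G(N_G(v)\cap Y)\cap X$. So if $v,v'\in Z$ have the same neighbourhood in $Y$ then they have the same second neighbourhood in $X$, and hence $|\{N^2_G(v)\cap X : v\in Z\}| \le |\{N_G(v)\cap Y : v\in Z\}| = \pi^1_{G'}(Y)$, where $G'$ is (a subgraph of) $G$ with the relevant bipartition. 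The natural move is therefore to bound $\pi^1_{G}(Y)$ via \cref{SurfaceNeighbours}, which gives $\pi^1_G(Y)\le 6|Y|+5\gamma-9$, and then to bound $|Y|=|N_G(X)|$ in terms of $|X|$.

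First I would bound $|N_G(X)|$. The subgraph of $G$ induced between $X$ and $Y=N_G(X)$, after deleting vertices of $Y$ with duplicate neighbourhoods in $X$ (which does not change $N^2_G(v)\cap X$ for $v\in Z$, since we only care about which such classes are hit — one has to be slightly careful here and keep one representative of each neighbourhood-on-$X$ class), has at most $\pi^1_G(X)\le 6|X|+5\gamma-9$ vertices in $Y$ by \cref{SurfaceNeighbours}. Wait — that is not quite the bound I want, because two vertices of $Z$ adjacent to distinct vertices of $Y$ with the \emph{same} neighbourhood on $X$ still get the same second neighbourhood on $X$, so it is legitimate to collapse $Y$ to one representative per $X$-neighbourhood class. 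After this reduction $|Y|\le \pi^1_G(X)\le 6|X|+5\gamma-9$. Then apply \cref{SurfaceNeighbours} again to the pair $(Y, Z)$ in the reduced graph to get $|\{N_G(v)\cap Y: v\in Z\}|\le 6|Y|+5\gamma-9 \le 6(6|X|+5\gamma-9)+5\gamma-9 = 36|X|+35\gamma-63$. This is linear in $\gamma$, not quadratic, so evidently the intended argument is more delicate and the quadratic dependence in the statement must come from a different (and presumably sharper-in-some-other-respect, or more honest) accounting — most likely one must \emph{not} collapse $Y$ so crudely, because collapsing changes the graph in a way that loses control, or because one needs a refined split of $Y$ and $Z$ by degree as in the proof of \cref{SurfaceNeighbours}.

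So the real plan, mirroring the proof of \cref{SurfaceNeighbours}, is: partition $Z$ by $|N_G(v)\cap Y|$, handle the low-degree classes ($0$, $1$, $2$ neighbours in $Y$) separately using that vertices with a common small neighbourhood in $Y$ still split according to what that small set sees in $X$ — e.g. a vertex $v\in Z$ with exactly one neighbour $y\in Y$ contributes $N_G(y)\cap X$, of which there are at most $\pi^1_G(X)$ possibilities; a vertex with exactly two neighbours $y_1,y_2\in Y$ contributes $(N_G(y_1)\cup N_G(y_2))\cap X$, bounded by the number of edges of a bounded-genus graph on the $\le \pi^1_G(X)$ "types", i.e. $O((\pi^1_G(X)+\gamma))$ — and then bound the number of $v\in Z$ with $\ge 3$ neighbours in $Y$ having pairwise distinct second neighbourhoods by an Euler-formula argument on an appropriate bounded-genus minor, forcing them to be few. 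Multiplying the $O(|X|+\gamma)$ type count into these quadratic-in-that-count terms is exactly what produces the $60\gamma^2+125\gamma+68$ coefficient and the $-120\gamma^2-\dots$ correction.

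The main obstacle is the bookkeeping in the middle: one must argue that distinct second neighbourhoods $N^2_G(v)\cap X$ for $v\in Z$ correspond to distinct "sets of $X$-types", and that the number of such sets arising from $2$-element neighbourhoods in $Y$ is controlled by edge-counts in a genus-$\gamma$ graph whose vertex set is the set of $X$-neighbourhood-types of $Y$ — this requires exhibiting that graph as a (1-shallow) minor of $G$ (or a bounded-genus graph derived from it), so that Euler's formula applies, and then optimising the degenerate/degree split exactly as in \cref{SurfaceNeighbours} to get the stated constants. Getting the additive constant (the $-120\gamma^2-250\gamma-132$) right will just be arithmetic once the structural inequalities are in place, but the structural step of simultaneously controlling the $Y$-side count and the $Z$-side count through two nested applications of the surface-neighbourhood bound, with the degree-$\le 2$ exceptional classes threaded through, is where the care is needed.
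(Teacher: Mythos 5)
Your initial observation is sound and is essentially the paper's \cref{Nu2}: $N^2_G(v)\cap X$ is determined by $N_G(v)\cap Y$. You also correctly sense that your first plan (collapse $Y$ to one representative per $X$-neighbourhood class, then apply \cref{SurfaceNeighbours} a second time to the pair $(Y,Z)$) cannot be right because it would yield a bound linear in $\gamma$, but you do not diagnose \emph{why} it fails: identifying non-adjacent vertices of $Y$ with the same $X$-type is not a minor operation and does not preserve Euler genus, so \cref{SurfaceNeighbours} is simply not applicable to the collapsed graph; and deleting all but one representative per type destroys the $Z$--$Y$ adjacencies you need. The only type classes you can safely contract are the \emph{size-one} classes, since $\{x\}\cup Y^1_x$ induces a star — the paper does exactly this and nothing more.

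The genuine gap in your revised plan is that you have no mechanism to control the multiplicity of a size-two $X$-neighbourhood class in $Y$. A $K_{2,m}$ is planar for every $m$, so arbitrarily many $y\in Y$ can share the same two-element $X$-neighbourhood even when $\gamma=0$; these cannot be contracted to one vertex, cannot be deleted, and form no connected branch set, so the ``type graph'' you want to invoke Euler's formula on is not exhibited as a bounded-genus minor. This is not bookkeeping — it is the heart of the proof, and the paper's answer is a separate inductive lemma (\cref{SurfaceSecondNeighbourhoodInduction}) that uses \cref{FindContractiblecycle}: whenever a size-two class has $\ge 2\gamma+2$ members, a $K_{2,2\gamma+2}$ embeds and hence contains a \emph{contractible} $4$-cycle, along which $G$ splits into two pieces of genus $\le\gamma$ on which one recurses (inducting on $|Y|$). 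Only after that reduction are all size-two classes of size $\le 2\gamma+1$ (and size-$\ge 3$ classes of size $\le 2\gamma+2$ via $K_{3,2\gamma+3}$), and one can build the auxiliary graph $H$ and invoke \cref{SurfaceNeighbours}. Note also that the paper's decomposition is by $|N_G(y)\cap X|$ for $y\in Y$ and by $|N^2_G(v)\cap X|$ for $v\in Z$, not by $|N_G(v)\cap Y|$; the latter split does not isolate the size-two multiplicity problem, which lives on the $Y$-side.
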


\begin{proof}
We may assume that $N^2_G(v)\cap X\neq N^2_G(w)\cap X$ for all distinct $v,w\in Z$. Let $Z^0:=\{v\in Z: |N^2_G(v)\cap X|=0\}$ and $Z^1:=\{v\in Z: |N^2_G(v)\cap X|=1\}$ and $Z^2:=\{v\in Z: |N^2_G(v)\cap X|\geq 2\}$.  
Thus $|Z^0|\leq 1$ and $|Z^1|\leq|X|$. 
By \cref{SurfaceSecondNeighbourhoodInduction} below, 
$|Z^2|\leq (60 \gamma^2 + 125 \gamma + 67)(|X|-2)+1$. 
In total, $|Z|\leq 
(60 \gamma^2 + 125 \gamma + 67)(|X|-2)+|X|+2
= 
(60 \gamma^2 + 125 \gamma + 68)|X|-120 \gamma^2 -250 \gamma -132$, as desired.
\end{proof}

The proof of \cref{SurfaceSecondNeighbourhoodInduction} uses the next lemma, which follows from  \citep[Proposition~4.2.7]{MoharThom} and the discussion after it.

\begin{lem}
\label{FindContractiblecycle}
If $K_{2,2\gamma+2}$ is embedded in a surface of Euler genus $\gamma$, then some 4-cycle in $K_{2,2\gamma+2}$ is contractible.
\end{lem}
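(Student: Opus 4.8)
The plan is to appeal directly to the theory of embeddings of graphs with large representativity (face-width), as developed in Mohar and Thomassen's book. Recall that the \emph{representativity} (or face-width) of an embedded graph is the minimum number of points in which a noncontractible closed curve in the surface meets the graph. The key structural fact I would use is \citep[Proposition~4.2.7]{MoharThom}: if a graph $H$ is embedded in a surface of Euler genus $\gamma$ with representativity $\rho$, then $H$ contains a ``large grid-like'' or ``large cylinder-like'' subgraph whose size grows with $\rho$; more to the point for us, there is a bound stating that an embedding of $K_{2,n}$ in a surface of Euler genus $\gamma$ has representativity at most some function of $\gamma$ (in fact the discussion after Proposition~4.2.7 gives that the representativity of any embedding of $K_{2,n}$ is at most $2\gamma+1$, independent of $n$).

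First I would fix an embedding of $K_{2,2\gamma+2}$ in a surface $\Sigma$ of Euler genus $\gamma$, writing $\{a,b\}$ for the side of size $2$ and $u_1,\dots,u_{2\gamma+2}$ for the side of size $2\gamma+2$. Each pair $a u_i b u_i$ wait---rather, for each $i\neq j$ the vertices $a,u_i,b,u_j$ span a $4$-cycle $C_{ij} = a u_i b u_j a$, and these $4$-cycles are exactly the $4$-cycles of $K_{2,2\gamma+2}$. The second step is to invoke the representativity bound: since the embedding has representativity at most $2\gamma+1 < 2\gamma+2$, there is a noncontractible closed curve meeting the graph in at most $2\gamma+1$ points, and one can choose it to meet only vertices. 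Such a curve must avoid at least one of the $u_i$, say $u_{i_0}$, and at least one more vertex on the large side; but the more useful consequence is a counting/pigeonhole argument: the $2\gamma+2$ cycles $C_{i,i+1}$ (indices cyclically) are ``parallel'' in the sense of sharing only the edges at $a$ and $b$, and if every one of them were noncontractible one could extract from them a noncontractible curve hitting the graph in too few points, or use them to witness representativity exceeding $2\gamma+1$---contradiction. Hence at least one $C_{ij}$ is contractible.

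The cleanest route, and the one I would actually write, is to quote the consequence that Mohar and Thomassen draw explicitly: among a family of $2\gamma+2$ pairwise ``homotopically independent'' $4$-cycles of a $K_{2,2\gamma+2}$-embedding, not all can be noncontractible, because a maximal family of pairwise-noncontractible, pairwise-disjoint-up-to-endpoints cycles in a surface of Euler genus $\gamma$ has size at most $2\gamma+1$ (this is the surface analogue of the fact that $\gamma$ handles/crosscaps support only boundedly many independent cycles). So the argument reduces to: (i) the $4$-cycles $C_{1,2}, C_{2,3}, \dots, C_{2\gamma+2,1}$ (or a suitable subfamily) are pairwise independent in the relevant sense; (ii) therefore at most $2\gamma+1$ of them are noncontractible; (iii) hence one is contractible. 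I expect step~(i)---checking that these $4$-cycles really are independent in whatever precise sense Proposition~4.2.7 and its corollaries require (disjointness modulo the two shared vertices $a,b$, plus the fact that contracting a contractible one does not interfere)---to be the only subtle point, and it is exactly the content that the cited ``discussion after'' the proposition supplies, so in the write-up I would simply cite it rather than reprove it.
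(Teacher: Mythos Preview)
Your bottom-line plan---to simply cite Proposition~4.2.7 of Mohar--Thomassen and the discussion following it---is exactly what the paper does: the lemma is stated with no argument beyond that citation. So on the level of what is actually written, your approach and the paper's coincide.

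However, your sketch of what Proposition~4.2.7 says is not right, and the representativity detour does not lead anywhere. The proposition is not a statement that high representativity forces grid- or cylinder-like subgraphs, nor does the discussion after it bound the representativity of embeddings of $K_{2,n}$. The relevant content concerns families of internally disjoint $(a,b)$-paths (equivalently, $4$-cycles through two fixed vertices) in an embedded graph: on a surface of Euler genus $\gamma$ one cannot have $2\gamma+2$ such paths with every pair $P_i \cup P_j$ noncontractible. Your third paragraph gestures at this, but the objects you should be counting are not ``pairwise-disjoint-up-to-endpoints cycles'' through a single point---the $4$-cycles here all share \emph{two} vertices $a$ and $b$---and the bound does not come via representativity. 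If you intend to unpack the citation rather than merely invoke it, you should first pin down which statement in Mohar--Thomassen is actually being used.
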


\begin{lem}
\label{SurfaceSecondNeighbourhoodInduction}
Let $G$ be a graph of Euler genus $\gamma$, and let $X\subseteq V(G)$ with $|X|\geq 2$. Let $Y:=N_G(X)$ and $Z:=V(G)\setminus(X\cup Y)$. 
Assume that $|N^2_G(v)\cap X| \geq 2$ for every vertex $v\in Z$, and that $N^2_G(v)\cap X\neq N^2_G(w)\cap X$ for all distinct $v,w\in Z$. 
Then $|Z|\leq (60 \gamma^2 + 125 \gamma + 67)(|X|-2)+1$. 
\end{lem}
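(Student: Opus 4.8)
The statement is an inductive/structural bound on how many distinct second-neighbourhood traces on a fixed set $X$ can be realised by vertices two steps away in a bounded-genus graph. My plan is to prove it by induction on $|X|$, mimicking the structure of the first-neighbourhood arguments (\cref{SurfaceNeighbours}) but with an extra layer to handle paths of length two. The base case $|X|=2$ should be handled directly: by \cref{FindContractiblecycle}, we cannot have too many vertices in $Z$ each joined by two internally disjoint paths of length $\leq 2$ to both vertices of $X$ (too many would force a large $K_{2,m}$-type configuration and a contractible 4-cycle), which pins down $|Z|\leq 1$ in the base, matching the formula $(60\gamma^2+125\gamma+67)(0)+1$.

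**Inductive step.** For $|X|\geq 3$, I would pick a suitable vertex $x\in X$ and delete it, set $X':=X\setminus\{x\}$, and compare the second-neighbourhood profiles on $X$ with those on $X'$. Vertices $v\in Z$ whose trace $N^2_G(v)\cap X$ still has $\geq 2$ elements inside $X'$ and remains distinct after deleting $x$ are controlled by the inductive hypothesis applied to $G-x$ (noting $G-x$ still has Euler genus $\leq\gamma$, and $Z$ shrinks appropriately). The overhead to bound is: (i) vertices whose trace on $X$ is not distinct on $X'$, i.e.\ pairs $v,w$ with $N^2_G(v)\cap X\neq N^2_G(w)\cap X$ but $N^2_G(v)\cap X'=N^2_G(w)\cap X'$ — these differ precisely on whether $x$ is reached, so they are organised into a bipartite-like incidence that I would bound via a genus/Euler-formula count of how many vertices can reach $x$ by a path of length $\leq 2$ whose ``fingerprint'' on $X'$ is fixed; and (ii) vertices that had $\geq 2$ neighbours in $X$ but only $\leq 1$ in $X'$ — these essentially reach $x$ plus at most one other vertex of $X$, and counting them is again a bounded-genus bipartite count. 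The constant $60\gamma^2+125\gamma+67$ will come out of carefully summing an $O(\gamma)$ contribution from a $K_{2,2\gamma+2}$-avoidance argument (the quadratic term is the telltale sign that a factor like $2\gamma+2$ gets multiplied into another linear-in-$\gamma$ face/edge count).

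**The key genus tool.** The crucial combinatorial input is \cref{FindContractiblecycle}: whenever two vertices are each connected to many common ``targets'' by length-$\leq2$ paths, we get an embedded $K_{2,2\gamma+2}$ and hence a contractible 4-cycle, which lets us cut along it (or use the resulting planar disc) to reduce genus or to derive a contradiction with distinctness of profiles. I would use this to argue that the ``bipartite incidence'' between $Z$-vertices and the pairs-of-$X$-vertices-reached-via-a-common-intermediate-vertex is sparse — roughly, each intermediate vertex $y\in Y$ can be the midpoint witnessing $\geq 2\gamma+2$ profiles only finitely often, so the total count is linear in $|Y|$, and $|Y|$ itself is controlled in terms of $|X|$ and $\gamma$ (since $Y\subseteq N_G(X)$ and we may assume no two $Y$-vertices have the same neighbourhood on $X$, reusing \cref{SurfaceNeighbours}-type reasoning).

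**Main obstacle.** I expect the hardest part to be bookkeeping the case analysis in the inductive step so that the constants actually close up to $(60\gamma^2+125\gamma+67)(|X|-2)+1$ rather than something larger. Specifically, one must be careful that the ``overhead'' vertices counted at the step involving $x$ contribute only a bounded amount \emph{per unit increase in $|X|$} — i.e.\ the deletion of a single well-chosen vertex must absorb all the new profiles, which requires choosing $x$ cleverly (e.g.\ a vertex of $X$ of small degree in an auxiliary sparse graph, guaranteed by a genus/degeneracy bound). The danger is that a naive choice of $x$ loses a factor of $|X|$; avoiding this, and simultaneously keeping the genus argument via $K_{2,2\gamma+2}$ clean, is where the real work lies. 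A secondary subtlety is that paths of length two can be ``routed'' through vertices of $X$ itself or through $Y$, and one must make sure the induction on $G-x$ does not destroy such routes in an uncontrolled way — handled by accounting separately for $v$ with a neighbour in $X$ versus $v$ all of whose $X$-routes pass through $Y$.
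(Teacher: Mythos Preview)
Your proposal diverges from the paper's proof in its induction scheme, and this is where the gap lies. The paper does \emph{not} induct on $|X|$ by deleting a vertex; it inducts on $|Y|$ and uses \cref{FindContractiblecycle} in a much more structural way. The dichotomy is: either some pair $x_1,x_2\in X$ has at least $2\gamma+2$ common neighbours in $Y$, or not. In the first case, \cref{FindContractiblecycle} yields a contractible 4-cycle $C=(x_1,y_1,x_2,y_2)$, and the paper physically cuts the surface along $C$ into $G_1$ and $G_2$ (identifying $y_1,y_2$ in each), each of Euler genus at most~$\gamma$. Both pieces have strictly smaller $|Y|$, and since $|X'|+|X''|=|X|+2$ (with $X'\cap X''=\{x_1,x_2\}$), the two inductive bounds $c(|X'|-2)+1$ and $c(|X''|-2)+1$ sum exactly to $c(|X|-2)+1$ after a $-1$ correction from a shared profile. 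In the second case, every pair of $X$-vertices has at most $2\gamma+1$ common $Y$-neighbours, and the paper bounds $|Z|$ directly (no induction) by building an auxiliary bipartite graph $H$ on $Z$ versus a contracted version of $Y$, and applying \cref{SurfaceNeighbours} twice; the constant $c=60\gamma^2+125\gamma+67$ is chosen precisely so this terminal estimate is at most $c(|X|-2)+1$ when $|X|\geq 3$.

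Your vertex-deletion scheme has two concrete problems. First, the base case $|X|=2$ is trivial and does not need \cref{FindContractiblecycle}: the only subset of $X$ of size at least $2$ is $X$ itself, so distinctness of profiles forces $|Z|\leq 1$ immediately. Second, and more seriously, inducting on $|X|$ by removing one vertex $x$ requires the overhead (profiles that collapse or drop below size $2$) to be at most $c$ per step, and you give no mechanism to enforce this --- a generic $x$ could be in $\Theta(|Z|)$ profiles, and your ``choose $x$ of small degree in an auxiliary sparse graph'' idea is not developed. Even with a clever choice, the argument would still need the $2\gamma+1$ cap on common $Y$-neighbours to make the auxiliary graph sparse, and at that point you are essentially in the paper's non-cutting case anyway, but without the cutting case to fall back on when the cap fails. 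The cut-along-a-contractible-cycle move is the missing idea: it is what allows the induction to proceed without any per-vertex overhead accounting, and it is why the induction is on $|Y|$ rather than $|X|$.
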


\begin{proof}
We prove that $|Z|\leq c(|X|-2)+1$ by induction on $|Y|$, where $c:=60 \gamma^2 + 125 \gamma + 67$. (This choice of $c$ will become clear at the end of the proof.)\ In the base case, if $|X|=2$ then $|Z|\leq 1=c(|X|-2)+1$. Now assume that $|X|\geq 3$. We may assume that each of $X$, $Y$ and $Z$ are independent sets. Consider an embedding of $G$ into a surface of Euler genus $\gamma$. 

First suppose that there is a set $Y_0\subseteq Y$ and distinct vertices $x_1,x_2\in X$ such that $|Y_0|\geq 2\gamma+2$ and $N_G(y)\cap X=\{x_1,x_2\}$ for each $y\in Y_0$. Thus $G[\{x_1,x_2\}\cup Y_0]$ contains a $K_{2,2\gamma+2}$ subgraph. By \cref{FindContractiblecycle}, there is a contractible 4-cycle $C=(x_1,y_1,x_2,y_2)$ in $G$, for some $y_1,y_2\in Y_0$. So $C$ bounds a disc $D$. Let $G_1$ be the subgraph of $G$ induced by the vertices embedded in $D$, and let $G_2$ be the subgraph of $G$ induced by the vertices embedded in the boundary of $D$ or not in $D$. Thus $G=G_1\cup G_2$ and $G_1\cap G_2=C$, where $C$ is the boundary of a face of both $G_1$ and $G_2$. 

Let $G'$ be the graph obtained from $G_1$ by identifying $y_1$ and $y_2$ into a vertex $y'$. Let $G''$ be the graph obtained from $G_2$ by identifying $y_1$ and $y_2$ into a vertex $y''$. Since $y_1$ and $y_2$ are on a common face before their identification, $G'$ and $G''$ have Euler genus at most $\gamma$. 

Let $X':=V(G')\cap X$ and $X'':=V(G'')\cap X$. 
Note that $X'\cap X''=\{x_1,x_2\}$.
Let $Y':=(V(G')\cap Y)\cup\{y'\}$ and 
$Y'':=(V(G'')\cap Y)\cup\{y''\}$. 
Note that $|Y'|<|Y|$ and $|Y''|<|Y|$. 
Let $Z':=V(G')\cap Z$ and $Z'':=V(G'')\cap Z$. 
Note that $Z'$ and $Z''$ partition $Z$. 

We claim that $|N^2_{G'}(v)\cap X'|\geq 2$ for each $v\in Z'$. Suppose that $|N^2_{G'}(v)\cap X'|\leq 1$. Then there is a vertex from $N^2_G(v)\cap X$ in $G''-V(G')$. Since $C$ is separating, $v$ is adjacent to $y_1$ or $y_2$ in~$G$, implying $x_1,x_2\in N^2_{G'}(v)$ and $|N^2_{G'}(v)\cap X'|\geq 2$, as desired. Similarly, $|N^2_{G''}(v)\cap X''|\geq 2$ for each $v\in Z''$. 

Since $N_{G'}(y')\cap X'=N_G(y_1)\cap X=N_G(y_2)\cap X=\{x_1,x_2\}$, we have $N^2_{G'}(v)\cap X' = N^2_G(v)\cap X$ for every $v\in Z'$. Hence $N^2_{G'}(v)\cap X'\neq N^2_{G'}(w)\cap X'$ for distinct vertices $v,w\in Z'$. Similarly, $N^2_{G''}(v)\cap X''\neq N^2_{G''}(w)\cap X''$ for distinct vertices $v,w\in Z''$. 

By assumption, there is at most one vertex $v\in Z$ with  $N^2_G(v)\cap X=N_G(y_1)\cap X$. Without loss of generality, if such a vertex $v$ exists, then $v$ is not in $Z''$. Add a new vertex $z$ to $Z''$ and to $G''$ only adjacent to $y''$ in $G''$. Then $x_1,x_2\in N^2_{G''}(z)\cap X''$ and $|N^2_{G''}(z)\cap X''|\geq 2$ as required. If $N^2_{G''}(z)\cap X'' = N^2_{G''}(v)\cap X''$ for some vertex $v\in Z''\setminus\{z\}$, then $N^2_{G''}(v)\cap X'' = N^2_{G''}(z)\cap X''=N_G(y_1)$, which contradicts the above property of $Z''$. Hence $N^2_{G''}(z)\cap X'' \neq N^2_{G''}(v)\cap X''$ for every vertex $v\in Z''\setminus\{z\}$.

Now $|Z_1|+|Z_2|=|Z|+1$. We have shown that $X'$, $Y'$ and $Z'$ satisfy the assumptions of the inductive hypothesis within $G'$. Since $|Y'|<|Y|$, by induction, $|Z'|\leq c(|X'|-2)+1$. Similarly, $|Z''|\leq c(|X''|-2)+1$. Hence
\begin{align*}
|Z|\,=\,|Z'|+|Z''|-1  & \leq c(|X'|-2)+1 +  c(|X''|-2)+1-1  \\
&= c( |X'|+|X''|-4)+1 \\
& = c(|X|-2)+1,
\end{align*}
as desired. Now assume that there are no such vertices $x_1,x_2\in X$ and set $Y_0$. 

As illustrated in \cref{Hgraph}, let $Y^1$ be the set of vertices in $Y$ with exactly one neighbour in $X$. Let $Y^1_1,\dots,Y^1_p$ be the partition of $Y^1$, where for all $v\in Y^1_i$ and $w\in Y^1_j$ we have $N_G(v)\cap X=N_G(w)\cap X$ if and only if $i=j$. Let $y^1_i$ be a vertex in $Y^1_i$ and let $x_i$ be the neighbour of $y^1_i$ in $X$. 

Let $Y^2$ be the set of vertices in $Y$ with exactly two neighbours in $X$. 
Let $Y^2_1,\dots,Y^2_q$ be the partition of $Y^2$, where for all $v\in Y^2_i$ and $w\in Y^2_j$ we have $N_G(v)\cap X=N_G(w)\cap X$ if and only if $i=j$. As shown above, $|Y^2_i|\leq 2\gamma+1$ for each $i\in\{1,\dots,q\}$. Let $y^2_i$ be a vertex in $Y^2_i$. 

Let $Y^3$ be the set of vertices in $Y$ with at least three neighbours in $X$. Let $Y^3_1,\dots,Y^3_r$ be the partition of $Y^3$, where for all $v\in Y^3_i$ and $w\in Y^3_j$ we have $N_G(v)\cap X=N_G(w)\cap X$ if and only if $i=j$. Since $K_{3,2\gamma+3}$ has Euler genus greater than $\gamma$, $|Y^3_i|\leq 2\gamma +2$ and $|Y^3|\leq (2\gamma+2)r$. Let $y^3_i$ be a vertex in $Y^3_i$. 

By construction, the vertices $y^1_1,\dots,y^1_p,y^2_1,\dots,y^2_q,y^3_1,\dots,y^3_r$ have pairwise distinct non-empty neighbourhoods in $X$. By \cref{SurfaceNeighbours} applied to the bipartite graph between $X$ and $\{y^1_1,\dots,y^1_p,y^2_1,\dots,y^2_q,y^3_1,\dots,y^3_r\}$, we have $p+q+r\leq (6|X|+5\gamma-9)-1=6|X|+5\gamma-10$, because $y^1_1,\dots,y^1_p,y^2_1,\dots,y^2_q,y^3_1,\dots,y^3_r$ have non-empty neighbours in $X$. In fact, $p\leq |X|$ and $q+r\leq 5|X|+5\gamma-10$.

\begin{figure}
\centering
\includegraphics{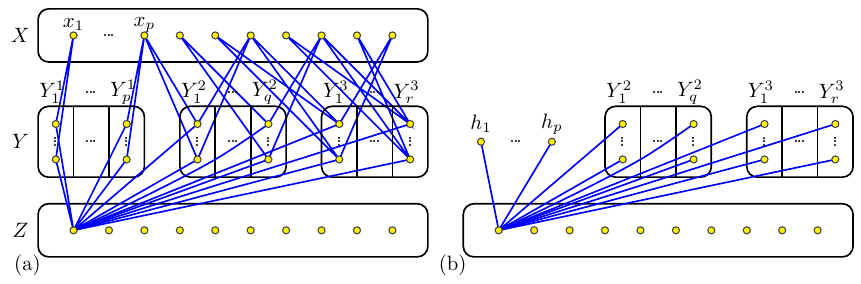}
\caption{(a) The sets $X,Y^1_1,\dots,Y^1_p,Y^2_1,\dots,Y^2_q,Y^3_1,\dots,Y^3_r,Z$ in $G$. (b) The graph $H$.}
\label{Hgraph}
\end{figure}

Let $H$ be the graph obtained from $G$ as follows: 
delete $X\setminus\{x_1,\dots,x_p\}$, 
delete any edges between $\{x_1,\dots,x_p\}$ and $Y^2\cup Y^3$, and 
for each $i\in\{1,\dots,p\}$ contract $\{x_i\}\cup Y^1_i$ (which induces a star) into a new vertex $h_i$. Note that $H$ is bipartite and planar, with one colour class $Z$ and the other colour class $\{h_1,\dots,h_p\}\cup Y^2\cup Y^3$, which has size at most 
$$p+|Y^2|+|Y^3| 
\leq p + (2\gamma+1)q + (2\gamma+2)r 
\leq  p + (2\gamma+2)(q+r)
\leq  |X| + (2\gamma+2)(5|X|+5\gamma-10).$$ 
For each $v\in Z$, 
$$N^2_G(v)\cap X = 
\{ x_i : h_i\in N_H(v) \} \cup 
\Big( \bigcup_{u\in N_H(v)\setminus\{h_1,\dots,h_p\}} \!\!\!\!\!\!\!\!\!\!\!\! N_G(u)\cap X \Big).$$
Thus $N^2_G(v)\cap X$ is determined by $N_H(v)$. 
For all distinct $v,w\in Z$, since
$N^2_G(v)\cap X\neq N^2_G(w)\cap X$, we have $N_H(v)\neq N_H(w)$.
By \cref{SurfaceNeighbours} applied to $H$, 
\begin{align*}
    |Z|
&\leq 6 (p+|Y^2|+|Y^3|) +5\gamma - 10 \\
&\leq 6\big( |X| + (2\gamma+2)(5|X|+5\gamma-10) \big) +5\gamma - 10  \\
& \leq c(|X|-2)+1,
\end{align*} 
since $|X|\geq 3$. Indeed, $c$ is defined so that this final inequality holds. 
\end{proof}

\begin{lem}
\label{Nu2}
Let $f:\NN\to\NN$ be a function and let $\GG$ be a monotone class, such that for every $G\in\GG$ and $X\subseteq V(G)$ and $Z\subseteq V(G)\setminus N_G[X]$,
\[| \{ N^2_G(v)\cap X : v\in Z \}| \leq f(|X|).\]
Then $\pi^2_G(X)\leq \pi^1_G(X)\, f(|X|)$. 
\end{lem}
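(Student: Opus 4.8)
The plan is to combine the distance-$1$ profile of a vertex on $X$ with its set of distance-$2$ neighbours in $X$, and show that together these two pieces of data determine the full distance-$2$ profile $\pi^2_G(v,X)$. First I would fix $G\in\GG$ and $X\subseteq V(G)$, and let $Y:=N_G(X)$ and $Z:=V(G)\setminus N_G[X]$, so that $V(G)\setminus X=Y\cup Z$ and $Y\cap Z=\emptyset$. For a vertex $v\in V(G)\setminus X$ and $w\in X$, the value $\dist^2_G(v,w)$ is one of $1$, $2$, or $\infty$; it equals $1$ precisely when $vw\in E(G)$, it equals $2$ precisely when $v$ is non-adjacent to $w$ but $w\in N^2_G(v)$, and it equals $\infty$ otherwise. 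Hence $\pi^2_G(v,X)$ is completely determined by the pair $\bigl(N_G(v)\cap X,\ N^2_G(v)\cap X\bigr)$. Indeed, given these two sets one recovers $\dist^2_G(v,w)$ for each $w\in X$ by the case analysis just described.

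The next step is the counting. The number of distinct pairs $\bigl(N_G(v)\cap X,\ N^2_G(v)\cap X\bigr)$ over $v\in V(G)\setminus X$ is at most the number of distinct first coordinates times the maximum number of distinct second coordinates compatible with a fixed first coordinate. For vertices in $Y$, the second coordinate $N^2_G(v)\cap X$ contains $N_G(v)\cap X$, but I will simply bound crudely: the number of possible first coordinates $N_G(v)\cap X$ over all $v\in V(G)\setminus X$ is exactly $\pi^1_G(X)$ by definition. To bound the second coordinate, note that if $v\in Y$ then $v\in N_G[X]$ and there is no restriction given by the hypothesis; however for such $v$ the set $N^2_G(v)\cap X$ is determined within a bounded range — but rather than split cases, the cleaner route is: apply the hypothesis to the set $Z=V(G)\setminus N_G[X]$ to get $|\{N^2_G(v)\cap X:v\in Z\}|\le f(|X|)$, and for $v\in Y$ observe that $N^2_G(v)\cap X\subseteq X$ while $N_G(v)\cap X$ already determines a lot. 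The honest accounting that gives exactly the stated bound $\pi^1_G(X)\,f(|X|)$ is: partition $V(G)\setminus X$ according to the value of $N_G(v)\cap X$ (at most $\pi^1_G(X)$ classes); within each class, I claim the number of distinct values of $N^2_G(v)\cap X$, and hence of $\pi^2_G(v,X)$, is at most $f(|X|)$. For $v\in Z$ this is immediate from the hypothesis applied with $Z$ itself (or any subset). The one subtlety is the $v\in Y$ case, which I address next.

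For $v\in Y$, I would argue that $N^2_G(v)\cap X$ is still determined by $N_G(v)\cap X$ together with a choice from at most $f(|X|)$ possibilities: consider the graph $G':=G-X'$ where $X':=N_G(v)\cap X$; then in $G'$ the vertex $v$ lies in $V(G')\setminus N_{G'}[X\setminus X']$ only if $v$ has no neighbour in $X\setminus X'$, which holds by definition of $X'$. Since $\GG$ is monotone, $G'\in\GG$, and applying the hypothesis to $G'$ with the set $X\setminus X'$ bounds the number of values of $N^2_{G'}(v)\cap(X\setminus X')$ by $f(|X\setminus X'|)\le f(|X|)$ (using that $f$ can be taken monotone, or just that $|X\setminus X'|\le|X|$ and re-reading the hypothesis with the set $X$ — actually the hypothesis as stated quantifies over all $X$, so $f(|X\setminus X'|)$ suffices). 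Finally, $N^2_G(v)\cap X = X' \cup \bigl(N^2_{G'}(v)\cap(X\setminus X')\bigr)$ up to the contribution of paths through $X'$, but paths of length $2$ from $v$ through a vertex of $X'$ reach only further vertices of $X'$ or of $N_G(X')$, not new vertices of $X\setminus X'$ unless — and here is precisely where one must be careful — $X$ is not independent. The clean fix is to note that $X'\subseteq N^2_G(v)\cap X$ always, and $(N^2_G(v)\cap X)\setminus X'$ equals exactly the set of $w\in X\setminus X'$ at distance $2$ from $v$, which is $N^2_{G}(v)\cap(X\setminus X')$ restricted to non-neighbours, determined by at most $f(|X|)$ choices via the monotone deletion argument above. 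Combining, each of the $\pi^1_G(X)$ classes contributes at most $f(|X|)$ distinct distance-$2$ profiles, giving $\pi^2_G(X)\le\pi^1_G(X)\,f(|X|)$.

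The main obstacle is exactly the handling of vertices $v\in Y=N_G(X)$, since the hypothesis only directly controls vertices outside $N_G[X]$; the resolution is the monotone-deletion trick of removing $N_G(v)\cap X$ to move $v$ into the "outside" regime relative to the remaining part of $X$, together with the elementary observation that $\pi^2_G(v,X)$ factors through the pair $(N_G(v)\cap X,\ N^2_G(v)\cap X)$. Everything else is routine bookkeeping.
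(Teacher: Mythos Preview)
Your overall strategy is exactly the paper's: partition $V(G)\setminus X$ into classes according to $N_G(v)\cap X$, pass to a subgraph of $G$ (using that $\GG$ is monotone) in which each class lies outside the closed neighbourhood of $X$, apply the hypothesis within each class, and multiply. The observation that $\pi^2_G(v,X)$ is determined by the pair $\bigl(N_G(v)\cap X,\,N^2_G(v)\cap X\bigr)$ is correct and is what drives the argument.

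The one genuine gap is your choice to delete the \emph{vertices} $X':=N_G(v)\cap X$ rather than the \emph{edges} between the class and $X$. With $G'=G-X'$ the hypothesis can only be applied to the set $X\setminus X'$, yielding the bound $f(|X\setminus X'|)$ for that class. Summing over classes gives $\sum_i f(|X\setminus X'_i|)$, and since $f$ is not assumed monotone this need not be at most $\pi^1_G(X)\,f(|X|)$. You noticed the issue (``using that $f$ can be taken monotone\dots'') but did not resolve it; you cannot ``re-read the hypothesis with the set $X$'' in $G'$, because $X'$ is no longer there. The paper instead lets $G_i$ be $G$ with all edges between the class $Y_i$ and $X$ deleted: then $Y_i\subseteq V(G_i)\setminus N_{G_i}[X]$ while $|X|$ is unchanged, so the hypothesis gives exactly $f(|X|)$. (Equivalently: in your $G'$, add $X'$ back as isolated vertices.) With this one change your argument goes through, and your hedged discussion of paths through $X'$ and through fellow class-members becomes the clean identity
\[
N^2_G(v)\cap X \;=\; A_i \,\cup\, \bigl(N_G(A_i)\cap X\bigr) \,\cup\, \bigl(N^2_{G_i}(v)\cap X\bigr),
\]
where $A_i$ is the common value of $N_G(v)\cap X$ on $Y_i$; the first two terms depend only on the class, so the number of distinct values of the left side on $Y_i$ is at most $f(|X|)$.
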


\begin{proof}
Let $s:= \pi^1_G(X)$ and $t:= f(|X|)$. 
Let $Y:=V(G)\setminus X$. 
Let $Y_1,\dots,Y_s$ be a partition of $Y$ where $v,w\in Y_i$ if and only if $N_G(v)\cap X=N_G(w)\cap X$.
For each $i\in \{1, \ldots, s\}$, let $G_i$ be the graph obtained from $G$ by deleting the edges between $Y_i$ and $X$. 
Since $\GG$ is monotone, $G_i\in\GG$, implying
$|\{ N^2_{G_i}(v) \cap X : v \in Y_i \}| \leq t$.
Let $Y_{i,1},\dots,Y_{i,t}$ be a partition of $Y_i$ where
$v,w\in Y_{i,j}$ if and only if $N^2_{G_i}(v)\cap X = N^2_{G_i}(w)\cap X$. It follows that for each $i\in \{1, \ldots, s\}$ and $j\in\{1, \ldots, t\}$, 
we have $\pi_2^G(v,X) = \pi_2^G(w,X)$ for all $v,w\in Y_{i,j}$. Thus $\pi^2_G(X)\leq st$, as desired. 
\end{proof}

\cref{SurfaceNeighbours,SurfaceSecondNeighbourhood,Nu2} imply the following bound on $\pi^2_G$ for graphs of given Euler genus. 

\begin{cor}\label{SurfaceNu2}  
For every graph $G$ of Euler genus $\gamma$ and for every set $X\subseteq V(G)$ with $|X|\geq 2$, 
\[ \pi^2_G(X) \leq (6 |X| + 5\gamma -9)
( (60 \gamma^2 + 125 \gamma + 68)|X|-120 \gamma^2 -250 \gamma -132). \]
\end{cor}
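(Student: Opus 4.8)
The plan is to simply compose \cref{SurfaceNeighbours}, \cref{SurfaceSecondNeighbourhood} and \cref{Nu2}. Let $\GG_\gamma$ denote the class of graphs of Euler genus at most $\gamma$. This class is monotone, since deleting a vertex or an edge cannot increase the Euler genus, so $\GG_\gamma$ is closed under taking (induced) subgraphs; hence \cref{Nu2} is applicable to $\GG_\gamma$.

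Next I would exhibit the function $f$ required by \cref{Nu2}. Define $f\colon\NN\to\NN$ by $f(m):=(60\gamma^2+125\gamma+68)m-120\gamma^2-250\gamma-132$ for integers $m\ge 2$ and $f(1):=2$. For $m\ge 2$ this value is a positive integer (it equals $4$ when $m=2$ and is increasing in $m$ because the coefficient $60\gamma^2+125\gamma+68$ is positive). One checks that this $f$ satisfies the hypothesis of \cref{Nu2} for $\GG_\gamma$: if $G\in\GG_\gamma$ and $X\subseteq V(G)$ with $|X|=1$, then $|\{N^2_G(v)\cap X:v\in Z\}|\le 2=f(1)$ holds trivially for any $Z$; and if $|X|\ge 2$, writing $Y:=N_G(X)$ and noting that $V(G)\setminus N_G[X]=V(G)\setminus(X\cup Y)$, \cref{SurfaceSecondNeighbourhood} gives $|\{N^2_G(v)\cap X:v\in V(G)\setminus(X\cup Y)\}|\le f(|X|)$, and therefore the same bound holds for every subset $Z\subseteq V(G)\setminus N_G[X]$.

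Applying \cref{Nu2} then yields $\pi^2_G(X)\le\pi^1_G(X)\,f(|X|)$ for every $G\in\GG_\gamma$ and every $X\subseteq V(G)$. Restricting to sets $X$ with $|X|\ge 2$, \cref{SurfaceNeighbours} bounds the first factor by $\pi^1_G(X)\le 6|X|+5\gamma-9$, and substituting the two bounds gives
\[\pi^2_G(X)\le(6|X|+5\gamma-9)\bigl((60\gamma^2+125\gamma+68)|X|-120\gamma^2-250\gamma-132\bigr),\]
which is the claimed inequality.

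There is essentially no obstacle in this step: all the substantive work has already been done in \cref{SurfaceSecondNeighbourhoodInduction} (the delicate induction on $|Y|$ that splits $G$ along a contractible $4$-cycle supplied by \cref{FindContractiblecycle}) and in \cref{SurfaceNeighbours}. The only points that need a moment's care are the monotonicity of $\GG_\gamma$, so that \cref{Nu2} is legitimately applicable, and the bookkeeping that the vertex set $Z=V(G)\setminus(X\cup Y)$ appearing in \cref{SurfaceSecondNeighbourhood} is exactly $V(G)\setminus N_G[X]$, matching the shape of the hypothesis demanded by \cref{Nu2}.
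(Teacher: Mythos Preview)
Your proposal is correct and follows exactly the paper's approach: the paper states the corollary as an immediate consequence of \cref{SurfaceNeighbours}, \cref{SurfaceSecondNeighbourhood} and \cref{Nu2}, and you have filled in the straightforward verification that the class of Euler-genus-$\gamma$ graphs is monotone and that the set $V(G)\setminus(X\cup N_G(X))$ in \cref{SurfaceSecondNeighbourhood} matches the $V(G)\setminus N_G[X]$ required by \cref{Nu2}. Your extra care in defining $f(1)$ is harmless though not strictly needed, since the proof of \cref{Nu2} only invokes the hypothesis for the given set $X$.
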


\section{Bounded Row-Treewidth Classes}
\label{BoundedRowTreewidth}
\label{PlanarGraphs}

This section presents upper bounds on reduced bandwidth and twin-width for powers of graphs with bounded row-treewidth, which includes planar graphs, graphs with Euler genus $\gamma$, $(\gamma,k)$-planar graphs, and map graphs. The heart of the proof is \cref{ProductPath} below, which depends on the following definition. As illustrated in \cref{fig:sxq}, for $x,q\in\NN$ with $q\geq 2$, let \defn{$S^*_{x,q}$} be the graph where:
\begin{itemize}
    \item $V(S^*_{x,q})$ is the disjoint union of sets $Q,A_1,\dots,A_x,B_1,\dots,B_x,C_1,\dots,C_x$ with $|Q|=2q-1$ and $|A_i|=|B_i|=|C_i|=q$ for all $i\in\{1,\dots,x\}$, and
    \item $Q\cup A_1$ is a clique, 
$Q\cup B_1$ is a clique, 
$Q\cup C_1$ is a clique, and for all $\in\{1,\dots,x-1\}$, 
$A_i\cup A_{i+1}$ is a clique, 
$B_i\cup B_{i+1}$ is a clique, and
$C_i\cup C_{i+1}$ is a clique, and 
\item any two vertices that do not belong to the same clique among the cliques above are non-adjacent.
\end{itemize}  
For $x,q,r\in\NN$ with $q\geq 2$, let $S_{x,q,r}$ be the graph obtained from $(S^*_{x,q})^r$ by removing all edges between $B_1\cup\dots\cup B_x$ and $C_1\cup\dots\cup C_x$. We call $Q$ the \defn{center} of $S_{x,q,r}$. See Figure~\ref{fig:s423} for an illustration of $S_{4,2,3}$. Note that the vertices in $Q$ have maximum degree in $S_{x,q,r}$. Thus 
\begin{equation}
\label{DeltaS}
\Delta(S_{x,q,r})\le (3r+2)q-2.
\end{equation}
\begin{figure}[!ht]
\centerline{\includegraphics[scale=0.3]{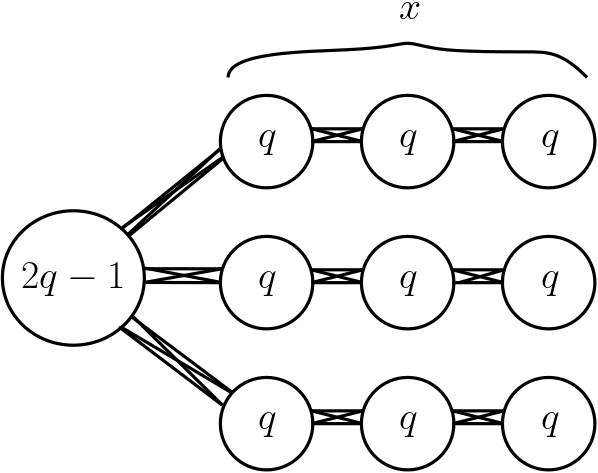} }
\caption{An illustration of $S_{x,q}^*$.}
\label{fig:sxq}
\end{figure}

\begin{figure}
    \centering
    \begin{tikzpicture}[scale=0.8]
        \tikzstyle{v}=[circle, draw, solid, fill=black, inner sep=0pt, minimum width=3pt]
        \tikzstyle{w}=[circle, draw, thick, inner sep=8pt]

         \node[v] (a1) at (0,.2) {};
        \node[v] (a2) at (-.2,-.2) {};
        \node[v] (a3) at (.2,-.2) {};
         \node[w] (A) at (0, 0) {};

        \foreach \x in {1,2,3,4} 
        {
         \node[w] (U\x) at (-\x * 2, 0) {};
         \node[w] (V\x) at (\x * 2, .7) {};
         \node[w] (W\x) at (\x * 2, -.7) {};
         \node[v] (u\x1) at (-\x * 2,.2) {};
         \node[v] (u\x2) at (-\x * 2,-.2) {};
         \node[v] (v\x1) at (\x * 2,.5) {};
         \node[v] (v\x2) at (\x * 2,.9) {};
         \node[v] (w\x1) at (\x * 2,-.5) {};
         \node[v] (w\x2) at (\x * 2,-.9) {};
         
        \draw[thick] (v\x1)--(v\x2);
        \draw[thick] (w\x1)--(w\x2);
        \draw[thick] (u\x1)--(u\x2);
        }
    \draw[thick] (a1)--(a2)--(a3)--(a1);

        \draw[very thick] (A)--(U1);
        \draw[very thick] (U1)--(U2);
        \draw[very thick] (U2)--(U3);
        \draw[very thick] (U3)--(U4);
       \draw[very thick] (A)--(W1);
        \draw[very thick] (W1)--(W2);
        \draw[very thick] (W2)--(W3);
        \draw[very thick] (W3)--(W4);
       \draw[very thick] (A)--(V1);
        \draw[very thick] (V1)--(V2);
        \draw[very thick] (V2)--(V3);
        \draw[very thick] (V3)--(V4);
        
        \draw[bend left=30, very thick] (U4) to (U2);
        \draw[bend left=30, very thick] (U3) to (U1);
        \draw[bend left=30, very thick] (U2) to (A);
        \draw[bend left=30, very thick] (U1) to (V1);
        \draw[bend left=50, very thick] (A) to (V2);
        \draw[bend left=30, very thick] (V1) to (V3);
        \draw[bend left=30, very thick] (V2) to (V4);

        \draw[bend right=30, very thick] (U1) to (W1);
        \draw[bend right=50, very thick] (A) to (W2);
        \draw[bend right=30, very thick] (W1) to (W3);
        \draw[bend right=30, very thick] (W2) to (W4);        
        
        \draw[bend left=40, very thick] (U4) to (U1);
        \draw[bend left=40, very thick] (U3) to (A);
        \draw[bend left=40, very thick] (U2) to (V1);
        \draw[bend left=40, very thick] (U1) to (V2);
        \draw[bend left=60, very thick] (A) to (V3);
        \draw[bend left=40, very thick] (V1) to (V4);

        \draw[bend right=40, very thick] (U2) to (W1);
        \draw[bend right=40, very thick] (U1) to (W2);
        \draw[bend right=60, very thick] (A) to (W3);
        \draw[bend right=40, very thick] (W1) to (W4);
    \end{tikzpicture}
    \caption{The graph $S_{4,2,3}$.}
    \label{fig:s423}
\end{figure}
Considering the vertex-ordering $A_x,A_{x-1},\dots,A_1,Q,B_1,C_1,B_2,C_2,\dots,B_x,C_x$ we see that 
\begin{equation}
\label{bwS}
\bw(S_{x,q,r})\leq |Q|+\sum_{i=1}^r(|B_i|+|C_i|)-1=(2q-1)+2qr-1=(2r+2)q-2.
\end{equation}

\begin{lem}\label{ProductPath}
Let $k, q, r\in\NN$ with $q\ge k+1$. Let $f$ be a monotone and union-closed graph parameter and let $g:\mathbb{N}\times \mathbb{N}\to\mathbb{R}$ be a function such that $f(S_{x,q,r})\le g(q,r)$ for all $x\in \NN$.
Let $P$ be a path and let $H$ be a graph admitting a $(k,q)$-rooted tree-decomposition $(T, \cB)$. Let $F$ be a trigraph with  $V(F)\subseteq V(H\boxtimes P)$ (not necessarily a subgraph of $H\boxtimes P$) such that: 
 \begin{itemize}
    \item \textup{(\defn{red edge condition})} for every red edge $vw$ of $F$, there is a leaf bag $B$ with parent $B'$ in $(T, \cB)$ such that $v,w\in (B\setminus B')\times V(P)$;
     \item \textup{(\defn{separation condition})} for every rooted separation $(C, D)$ of $H$ from $(T, \cB)$ and every $z\in V(P)$, we have
 $\left| \left\{N_{F}(v)\cap (D\times V(P)):v\in ((C\setminus D)\times \{z\})\cap V(F)\right\}  \right| \le q$; and
   \item \textup{(\defn{neighbourhood condition})} for every $z\in V(P)$ and $v\in (V(H)\times \{z\}) \cap V(F)$, we have $N_{F}[v]\subseteq V(H)\times N^r_P[z]$.
 \end{itemize}
Then $\reduced{f}(F)\le g(q, r)$.
\end{lem}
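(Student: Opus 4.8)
The plan is to build a reduction sequence $F=F_n,F_{n-1},\dots,F_1$ by a bottom-up sweep of the rooted tree-decomposition $(T,\cB)$, maintaining the invariant that the red graph $\widetilde F_i$ of every $F_i$ is a vertex-disjoint union of graphs, each a subgraph of $S_{x,q,r}$ with $x:=|V(P)|$. Given this invariant we will be done: since $f$ is monotone, $f(C)\le f(S_{x,q,r})$ for each component $C$ of $\widetilde F_i$, and since $f$ is union-closed, $f(\widetilde F_i)\le f(S_{x,q,r})\le g(q,r)$ for every $i$.

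First I would normalise. By replacing each node of $T$ having more than two children by a binary tree of copies of its bag, we may assume every internal node has at most two children; this keeps $(T,\cB)$ a $(k,q)$-rooted tree-decomposition and preserves the three hypotheses (the leaf bags, and the rooted separations up to the obvious correspondence, are unchanged). Every internal bag then has at most $k+1\le q$ vertices, and for every leaf bag $B$ with parent $B'$ we have $|B\setminus B'|\le q$. By the neighbourhood condition, the red edges of $F$ lying inside a leaf column $(B\setminus B')\times V(P)$ form a graph with at most $q$ vertices in every $P$-level and edges only between $P$-levels at distance at most $r$; such a graph is a subgraph of the part of $S_{x,q,r}$ induced by one arm $A_1\cup\dots\cup A_x$ (map the $P$-levels carrying vertices, in order, to $A_1,A_2,\dots$). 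With the red edge condition and union-closedness this shows the invariant holds for $F=F_n$.

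Next I would do the sweep: process the non-root nodes of $T$ in postorder. When a node $t$ with parent $t'$ is reached, the part of $F$ inside the subtree rooted at each child of $t$ will already have been contracted to at most $q$ vertices in every $P$-level; there are at most two such ``child columns'', and I would merge them, together with the at most $k+1\le q$ new vertices of $B_t$, into a single column with at most $q$ vertices per $P$-level. This is done by sweeping the levels of $P$ one at a time: while processing level $z$, repeatedly identify two vertices of $F$ that both lie at level $z$ inside the subtree at $t$ and have the same neighbourhood towards $\bigl(V(H)\setminus(\text{subtree strictly below }t)\bigr)\times V(P)$; by the separation condition applied to the rooted separation at $B_t$ with all children of $t$, there are at most $q$ such classes, so level $z$ collapses to at most $q$ vertices. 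Two things preserve the invariant. First, we only ever identify vertices with the same neighbourhood outside the subtree at $t$, so no red edge ever crosses the boundary of that subtree; hence distinct ``dormant'' pieces (already-processed subtrees, or still-untouched leaf columns) remain non-red-adjacent to one another and to the ``active'' one, and each dormant piece is a subgraph of one arm of $S_{x,q,r}$ by the leaf-column argument. Second, we only identify vertices of equal $P$-level, so the neighbourhood condition survives and every new red edge joins $P$-levels at distance at most $r$; moreover, by carrying out the identifications in a suitable order --- feeding the $\le q$ new vertices of $B_t$ into the moving frontier level --- at any moment at most one level of the active red component will be ``heavy'', carrying at most $2q-1$ vertices, while the already-swept levels form one ``tail'' of at most $q$ vertices per level and the two not-yet-merged child columns form two further such tails, mutually non-adjacent since the two child subtrees of $t$ are separated by $B_t$ in $H$. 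Hence the active red component is a subgraph of $S_{x,q,r}$: heavy level $\mapsto$ centre $Q$, swept tail $\mapsto$ arm $A$, the two child columns $\mapsto$ the mutually non-adjacent arms $B,C$, the $r$-th power accounting for the distance-$\le r$ edges. Once all non-root nodes have been processed, the root bag is empty, $F$ has at most $q$ vertices per $P$-level with only distance-$\le r$ edges, and a final sweep of $P$ (collapsing each level to one vertex, then contracting consecutive levels) completes the reduction sequence with the red graph remaining inside one arm of $S_{x,q,r}$. This gives $\reduced{f}(F)\le g(q,r)$.

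The hard part will be the bookkeeping in that last step: verifying that the rule ``identify only equal-$P$-level, equal-outside-neighbourhood vertices'' never pushes a red edge across a subtree boundary (this is exactly where the red edge and separation conditions, and the assumption $q\ge k+1$, are used, so that a whole bag fits inside a width-$q$ column), and --- more delicately --- organising the identifications so that the active red component is a subgraph of the \emph{particular} graph $S_{x,q,r}$: a centre of size exactly $2q-1$ rather than a larger multiple of $q$, exactly three arms, the non-adjacency of two of them, and the $r$-th-power adjacency, all simultaneously, while absorbing each bag's own new vertices without breaking the ``two separate child columns / one merged tail'' structure.
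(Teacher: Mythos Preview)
Your outline follows the paper's strategy: a bottom-up sweep of $(T,\cB)$, identifying at each step two vertices at a common $P$-level that share the same neighbourhood towards the ``outside'' of the current subtree, while maintaining the invariant that every red component embeds in $S_{x,q,r}$ (swept tail $\to$ arm $A$, two unmerged child columns $\to$ the mutually non-adjacent arms $B,C$, heavy level $\to$ centre $Q$). This is exactly the paper's argument, phrased as a postorder traversal rather than induction on the number of bags.

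There is, however, a genuine gap in how you absorb the vertices of $B_t$ itself. You appeal to the rooted separation $(C,D)$ at $B_t$, for which $C\cap D=B_t$ and $C\setminus D$ is the union of the children's subtrees. The separation condition bounds only $|\{N_F(v)\cap(D\times V(P)):v\in(C\setminus D)\times\{z\}\}|$; it says nothing about $v\in B_t\times\{z\}$. So your identification rule collapses the \emph{children's} part of level $z$ to at most $q$ vertices, but the $\le k{+}1$ vertices of $B_t\times\{z\}$ are not covered, and the claim ``level $z$ collapses to at most $q$ vertices'' is unjustified. Carrying those extra $\le k{+}1$ vertices per level forward breaks the next step: two such child columns at $t'$ would put up to $2(q{+}k{+}1)$ vertices into the heavy level, overflowing the $2q{-}1$ centre of $S_{x,q,r}$. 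The paper avoids this by splitting the work at a deepest internal bag $B$ into two separate inductive steps: first merge the children of $B$ into a single leaf of $\le q$ per level using the separation at $B$; then, with $B$ now having a single child, merge $(B\cup\text{child})\setminus(B\cap B')$ --- at most $q{+}k{+}1\le 2q$ per level --- down to $\le q$ using the separation at the \emph{parent} $B'$. Your sweep can be repaired the same way (two phases per internal node, switching separations for the second phase). You should also check, as the paper does explicitly, that the separation condition survives each round of identifications, since you need it for the intermediate trigraphs and not just for $F$.
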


\begin{proof}
We may assume that $V(F)=V(H\boxtimes P)$ because adding isolated vertices preserves the above three conditions and $\reduced{f}(F)$. Say $P=(w_1,w_2,\dots,w_{\ell})$. Let $\cT:=(T, \mathcal{B})$ and let $R$ be the root bag of $\cT$.
We proceed by induction on the number of bags of $\cT$. Since the root bag is empty, $\cT$ has at least two bags.

First suppose that $\cT$ consists of exactly two bags. 
Since $(T,\mathcal{B})$ is $(k,q)$-rooted, $|V(H)|\leq q$. 
By the neighbourhood condition, the underlying graph of $F$  is isomorphic to a subgraph of $S_{\ell,q,r}$. By assumption, $f(S_{\ell,q,r})\le g(q,r)$, and since $f$ is monotone, for every subgraph $Y$ of $S_{\ell,q,r}$, $f(Y)\le g(q, r)$. We obtain a reduction sequence of $F$ as follows. For $i=1,\dots,\ell-1$, arbitrarily identify $V(H)\times \{w_i\}$ into a vertex,  and then identify the resulting vertex with a vertex in $V(H)\times \{w_{i+1}\}$. Lastly, we identify $V(H)\times \{w_\ell\}$ into a vertex. The underlying graph of every trigraph in this reduction sequence is isomorphic to a subgraph of $S_{\ell, q,r}$, which shows that $\reduced{f}(F)\le g(q,r)$.

Now assume that $\cT$ has at least three bags. Let $B$ be an internal bag at maximum distance in $\cT$ from $R$. So all the children of $B$ are leaf bags. If $B=R$, then let $Y:=\emptyset$; otherwise, let $Y:=B\cap B'$ where $B'$ is the parent of $B$. 

First suppose that $B$ has at least two child bags $Q$ and $Q'$. 
If $\abs{(Q\cup Q')\setminus B}\le q$, then 
we obtain a tree-decomposition from $\cT$ by removing $Q$ and $Q'$ and attaching a leaf bag $Q\cup Q'$ to $B$. This results in a new tree-decomposition that satisfies the given conditions and has one fewer bag. So we are done by  induction. Now assume that 
$\abs{(Q\cup Q')\setminus B}> q$.

Since  $(T, \cB)$ is $(k,q)$-rooted, $\abs{(Q\cup Q')\setminus B}\le  2q$.
Furthermore, since $V(F)=V(H\boxtimes P)$, for each $w\in V(P)$, we have $\abs{((Q\cup Q')\setminus B)\times \{w\}}=\abs{(Q\cup Q')\setminus B}> q$.
Let 
$C:=Q\cup Q'\cup B$ and 
$D:=(V(H)\setminus (Q\cup Q')) \cup B$.
Note that $(C, D)$  is a rooted separation of $H$ at $B$. 
By the separation condition, for each $w\in V(P)$, 
 \[\left| \left\{N_{F}(v)\cap (D\times V(P)):v\in ((C\setminus D)\times \{w\})\cap V(F)\right\}  \right| \le q.\] 
So, for each $w\in V(P)$, there are distinct vertices $y$ and $z$ in $((Q\cup Q')\setminus B)\times \{w\}$ having the same neighbourhood on $D\times V(P)$. 

For $i=1,2,\dots,\ell$, reduce $((Q\cup Q')\setminus B)\times \{w_i\}$ into a set of $q$ vertices, by repeatedly choosing two vertices having the same neighbourhood on $D\times V(P)$. Note that we create no red edge incident with $D\times V(P)$.

Suppose that $U$ is the red graph constructed immediately  after identifying some vertices of $((Q\cup Q')\setminus B)\times \{w_i\}$ for some $i$. We claim that  $f(U)\le g(q,r)$.

\begin{figure}[!ht]
\centerline{\includegraphics[scale=0.3]{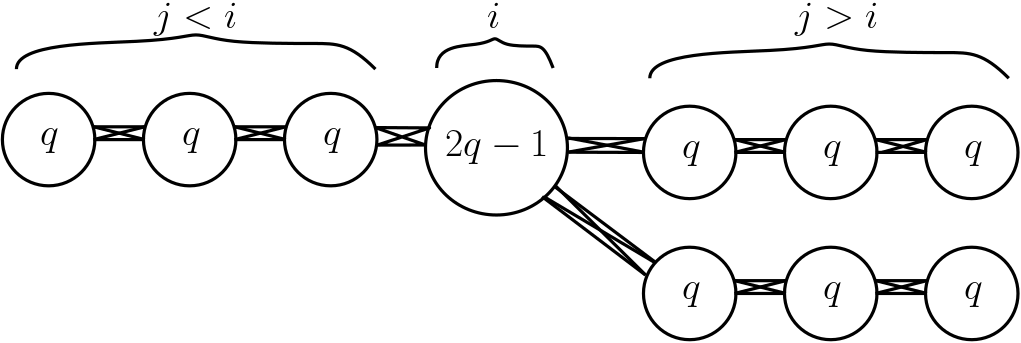} }
\caption{Identifications on $\bigcup_{j\in \{1, 2, \ldots, \ell\}} ((Q\cup Q')\setminus B)\times \{w_j\}$, when $r=1$.}
\label{fig:sxq2}
\end{figure}

For each $j<i$, $((Q\cup Q')\setminus B)\times \{w_j\}$ has been identified to a set of $q$ vertices, say $W_j$. For each $j>i$, $(Q\setminus B)\times \{w_j\}$ and $(Q'\setminus B)\times \{w_j\}$ are not yet identified, and so there are only black edges between $\bigcup_{j>i} (Q\setminus B)\times \{w_j\}$ and $\bigcup_{j>i} (Q'\setminus B)\times \{w_j\}$ by the red edge condition.  Also, $((Q\cup Q')\setminus B)\times \{w_i\}$ has been identified to at most $2q-1$ vertices, because at least one pair of vertices is identified. Call it $W_i$.  See \cref{fig:sxq2} for an illustration. 

Let 
\[A_1:=\bigcup_{j\le i}W_i\cup \left(\bigcup_{j>i} (Q\setminus B)\times \{w_j\}\right) \cup \left(\bigcup_{j>i} (Q'\setminus B)\times \{w_j\}\right), \]
and let $A_2:=V(U)\setminus A_1$. Observe that $A_2=D\times V(P)$.
Since we create no red edge incident with $D\times V(P)$ during the identifications, there is no edge between $A_1$ and $A_2$ in $U$. Furthermore, by the red edge condition and the neighbourhood condition, for each component of $U[A_2]$, its underlying graph is a subgraph of $S_{\ell,q,r}$.
Thus, $f(U[A_2])\le g(q,r)$. 
Also, the underlying graph of $U[A_1]$ is a subgraph of $S_{\ell, q, r}$ where $W_i$ is a subset of the center of $S_{\ell, q,r}$. So, $f(U[A_1])\le g(q,r)$. Since $f$ is union-closed, $f(U)\le g(q,r)$.

Now we explain how to apply the induction hypothesis to the resulting trigraph.
Let $F'$ be the resulting trigraph. Let $H'$ be the graph obtained from $H$ by removing $(Q\cup Q')\setminus B$ and adding a clique $Z$ of size $q$ that is complete to $B$. We obtain a tree-decomposition $(T', \cB')$ of $H'$ from $(T, \cB)$ by removing bags $Q$ and $Q'$, and adding a new bag $Z\cup B$ incident with $B$ (and $|(Z\cup B)\setminus B|\leq q$ as desired). Observe that $F'$ is a trigraph with $V(F')\subseteq V(H'\boxtimes P)$ satisfying the red edge and neighbourhood conditions. 

We verify that 
$F'$, $H'$, and $(T', \cB')$ satisfy the separation condition. Let $(C', D')$ be a rooted separation of $F'$ of $H'$ from $(T', \cB')$, and let $z\in V(P)$. 

\textbf{Case 1.} $Z\subseteq C'$: 

Let $(C_{pre}, D')$ be the rooted separation of $H$ obtained from $(C', D')$ by removing $Z$ from $C'$ and adding $(Q\cup Q')\setminus B$. Since we identified two vertices in $(C_{pre}\setminus D')\times \{z\}$ that have the same neighbourhood on $D' \times V(P)$, 
\begin{align*}
    &\left| \left\{N_{F'}(v)\cap (D'\times V(P)):v\in ((C'\setminus D')\times \{z\})\cap V(F')\right\}  \right| \\
    \le &\left| \left\{N_{F}(v)\cap (D'\times V(P)):v\in ((C_{pre}\setminus D')\times \{z\})\cap V(F)\right\}  \right| \le q.
\end{align*}

\textbf{Case 2.} $Z\subseteq D'$:

Let $(C', D_{pre})$ be the rooted separation of $H$ obtained from $(C', D')$ by removing $Z$ from $D'$ and adding $(Q\cup Q')\setminus B$. Again, since we identified two vertices in $(D_{pre}\setminus C')\times \{z\}$ that have the same neighbourhood on $C'\times V(P)$, 
\begin{align*}
    &\left| \left\{N_{F'}(v)\cap (D'\times V(P)):v\in ((C'\setminus D')\times \{z\})\cap V(F')\right\}  \right| \\
    \le &\left| \left\{N_{F}(v)\cap (D_{pre}\times V(P)):v\in ((C'\setminus D_{pre})\times \{z\})\cap V(F)\right\}  \right| \le q.
\end{align*}

Thus, $F'$, $H'$, and $(T', \cB')$ satisfy the separation condition.

Since $(T', \cB')$ has one fewer bag than $(T, \cB)$, by induction, there is a reduction sequence $L'$ of~$F'$, where for every trigraph $G$ in $L'$, $f(\widetilde{G})\le g(q,r)$. Together with the partial reduction sequence producing $F'$ from $F$, this gives the desired reduction sequence for $F$.

To finish the proof, it remains to consider the case in which $B$ has exactly one child  bag $Q$. Since $\cT$ has at least three bags, $B$ has its parent $B'$ and $Y=B\cap B'$. Now, $(B\cup Q)\setminus Y$ has at most $q+k+1\le 2q$ vertices, and we can do the same procedure in Cases 1 or 2 to reduce $((B\cup Q)\setminus Y)\times \{w\}$ (for each $w\in V(P)$) to a set of at most $q$ vertices by identifying two vertices having the same neighbourhood on $\bigcup_{j\in \{1, 2, \ldots, \ell\}} Y\times \{w_j\}$. This will correspond to vertices of $H$ forming one bag with $Y$.
Note that at the beginning, all edges between $\bigcup_{j\in \{1, 2, \ldots, \ell\}} ((B\cup Q)\setminus Y)\times \{w_j\}$ and $\bigcup_{j\in \{1, 2, \ldots, \ell\}} Y\times \{w_j\}$ are black.
So, the underlying graphs of red graphs constructed from $\bigcup_{j\in \{1, 2, \ldots, \ell\}} ((B\cup Q)\setminus Y)\times \{w_j\}$ will be subgraphs of $S_{\ell,q,r}$.

Let $F'$ be the resulting trigraph. Let $H'$ be the graph obtained from $H$ by removing $(B\cup Q)\setminus Y$ and adding a clique $Z$ of size $q$ that is complete to $Y$.
We obtain a tree-decomposition $(T', \cB')$ of $H'$ from $(T, \cB)$ by removing bags $Q$ and $B$, and adding a new bag $Z\cup Y$ incident with $B'$ (and $|Z\setminus B'|\leq q$ as desired). 
It is not difficult to see that $F', H', (\cT', \cB')$ satisfy the red edge, separation, and neighbourhood conditions. 
So, we can apply induction, which completes the proof of the lemma.
 \end{proof}

We now rewrite \cref{ProductPath} in a more useful form. 

\begin{lem}
\label{RowTreewidthNeighbourhoodPowers} 
Let $k,r,\pi^*\in\mathbb{N}$. Let $G$ be a graph with row-treewidth $k$, such that $\pi^r_G(X) \leq \pi^* $ for every set $X\subseteq V(G)$ with $|X|\leq (2r+1)(k+1)$. Then 
\[\reduced{\bw}(G^r) \leq (2r+2)\pi^*-2
\quad\text{and}\quad
\tww(G^r) \leq (3r+2)\pi^*-2.\]
\end{lem}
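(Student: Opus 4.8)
The strategy is to apply \cref{ProductPath} twice, once with $f=\bw$ and once with $f=\Delta$, using the bounds \eqref{bwS} and \eqref{DeltaS} on $S_{x,q,r}$ together with the choice $q:=\pi^*$ (noting $\pi^*\ge (2r+1)(k+1)\ge k+1$, since $\pi^r_G(X)\ge |X|$ whenever $|X|\le(2r+1)(k+1)$, or at worst by first enlarging $\pi^*$; this is the kind of routine bookkeeping I would not belabour). Indeed, $\bw$ and $\Delta$ are both monotone and union-closed graph parameters, \eqref{bwS} gives $\bw(S_{x,q,r})\le(2r+2)q-2=:g_{\bw}(q,r)$ for all $x$, and \eqref{DeltaS} gives $\Delta(S_{x,q,r})\le(3r+2)q-2=:g_{\Delta}(q,r)$ for all $x$. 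So once I produce an $(H,P,F)$ meeting the three hypotheses of \cref{ProductPath} with $q=\pi^*$ and with underlying graph $G^r$, I immediately get $\reduced{\bw}(G^r)\le(2r+2)\pi^*-2$ and $\tww(G^r)=\reduced{\Delta}(G^r)\le(3r+2)\pi^*-2$.

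First I would fix a product-structure witness: by hypothesis $G$ is a subgraph of $H_0\boxtimes P$ for some $P$ a path and $H_0$ with $\tw(H_0)\le k$. I need a \emph{$(k,q)$-rooted} tree-decomposition of a suitable host graph $H$, so the next step is to pass from an ordinary width-$k$ tree-decomposition of $H_0$ to a $(k,q)$-rooted one. Take a width-$k$ tree-decomposition of $H_0$; root it arbitrarily, add an empty root bag above the old root, and (since $q\ge k+1$) every internal bag already has $\le k+1$ vertices and every leaf bag $B$ with parent $B'$ satisfies $|B\setminus B'|\le k+1\le q$, so after trivial padding this is $(k,q)$-rooted. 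Set $H:=H_0$ and let $F$ be the trigraph $(V(G^r),E(G^r),\emptyset)$, viewed with $V(F)\subseteq V(H\boxtimes P)$ via the embedding $G\hookrightarrow H\boxtimes P$; adding isolated vertices, assume $V(F)=V(H\boxtimes P)$, which does not change $\reduced{\bw}$ or $\reduced\Delta$.

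Now I verify the three conditions of \cref{ProductPath}. The \textbf{red edge condition} is vacuous since $F$ has no red edges. For the \textbf{neighbourhood condition}: if $v=(a,z)\in V(H)\times\{z\}$, then any $G^r$-neighbour $u$ of $v$ is within distance $r$ of $v$ in $G\subseteq H\boxtimes P$, hence its $P$-coordinate lies in $N^r_P[z]$ (projecting a walk of length $\le r$ in $H\boxtimes P$ onto $P$ gives a walk of length $\le r$); so $N_F[v]\subseteq V(H)\times N^r_P[z]$. The \textbf{separation condition} is where the hypothesis $\pi^r_G(X)\le\pi^*$ is used, and this is the step I expect to require the most care. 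Fix a rooted separation $(C,D)$ of $H$ from the tree-decomposition with $C\cap D=B_t$, and fix $z\in V(P)$. For $v=(a,z)$ with $a\in C\setminus D$, I claim $N_F(v)\cap(D\times V(P))$ is determined by the distance-$r$ profile $\pi^r_G(a,B_t)$ of $a$ on the bag $B_t$ inside $G$: any $u=(b,z')\in D\times V(P)$ adjacent to $v$ in $G^r$ is joined to $v$ by a path of length $\le r$ in $G$, and since $(C\times V(P),\,D\times V(P))$ is a separation of $H\boxtimes P$ with intersection $B_t\times V(P)$, that path must pass through some $(b',z'')\in B_t\times V(P)$; the portion from $v$ to that crossing vertex lives in $G[C\times V(P)]$, so whether such a $u$ exists — and, more sharply, the whole set of $G^r$-neighbours of $v$ in $D\times V(P)$ — depends only on the distances from $a$ to $B_t$ in $G$, i.e.\ on $\pi^r_G(a,B_t)$. (Here one uses that within $C\times V(P)$ distances are computed in a fixed graph independent of $v$.) Hence the number of distinct such neighbourhood-traces is at most $\pi^r_G(B_t)$, and since $B_t$ is an internal bag we have $|B_t|\le k+1\le(2r+1)(k+1)$, so $\pi^r_G(B_t)\le\pi^*=q$, giving the separation condition. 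With all three conditions checked, \cref{ProductPath} applied with $f=\bw$, $g=g_{\bw}$ and with $f=\Delta$, $g=g_{\Delta}$ yields the two claimed bounds. The one delicate point — making the ``$N_F(v)\cap(D\times V(P))$ is a function of $\pi^r_G(a,B_t)$'' argument fully rigorous, in particular handling the path-coordinate interaction in the strong product and the case where the crossing happens at the same $P$-coordinate versus a shifted one — is the main obstacle, and I would write it out carefully by analysing shortest paths in $H\boxtimes P$ projected onto the two factors.
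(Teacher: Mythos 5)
Your overall plan---invoke \cref{ProductPath} twice, once with $f=\bw$, $g(q,r)=(2r+2)q-2$ and once with $f=\Delta$, $g(q,r)=(3r+2)q-2$, using the $(k,q)$-rooted decomposition obtained by rooting a width-$k$ tree-decomposition of $H$ and adding an empty root bag---is exactly the paper's route, and your checks of the red-edge and neighbourhood conditions are fine. But there are two problems, one minor and one substantive.

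Minor: the claim $\pi^r_G(X)\ge|X|$ is false in general. In an edgeless graph every $v\notin X$ has the same profile $\{(w,\infty):w\in X\}$, so $\pi^r_G(X)=1$. So the hypothesis does not automatically give $\pi^*\ge k+1$; that has to be assumed separately (the paper's applications all satisfy it, but the bookkeeping you wave off is not justified by the reason you give).

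Substantive: your verification of the separation condition is where the argument actually fails. You need to bound $\bigl|\{N_F(v)\cap(D\times V(P)) : v\in((C\setminus D)\times\{z\})\cap V(F)\}\bigr|$ by $\pi^*$, and the hypothesis only controls $\pi^r_G(X)$ for sets $X\subseteq V(G)$ with $|X|\le(2r+1)(k+1)$. You propose to use ``$\pi^r_G(a,B_t)$'', but $a\in V(H)$ and $B_t\subseteq V(H)$, so this is not a distance-$r$ profile in $G$ at all; read charitably as the profile of $v$ on $B_t\times V(P)$, that set has size $(k+1)\cdot|V(P)|$, which is unbounded, so the hypothesis gives nothing. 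The missing observation---which is the entire content of this step in the paper---is that any $G$-path of length $\le r$ starting at $(a,z)$ keeps its $P$-coordinate inside $N^r_P[z]$, because each step in a subgraph of $H\boxtimes P$ changes the $P$-coordinate by at most $1$. Hence a path of length $\le r$ from $v$ to $D\times V(P)$ must cross the separator inside $(C\cap D)\times N^r_P[z]$, a set of size at most $(k+1)(2r+1)$, and $N_F(v)\cap(D\times V(P))$ is determined by the distance-$r$ profile of $v$ on \emph{that} set. You explicitly label ``the path-coordinate interaction in the strong product'' as the delicate point and postpone it, but that is precisely the observation that makes the hypothesis applicable; without it, the plan does not close.
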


\begin{proof}
We may assume that $G\subseteq H\boxtimes P$, where $\tw(H)\leq k$ and $P$ is a path. We now show that \cref{ProductPath} is applicable, where $F$ is the trigraph obtained from $G^r$ with no red edges. The red edge condition holds trivially. 

Consider a  rooted tree-decomposition $(T, \cB)$ of $H$ with width at most $k$. Let $(C, D)$ be a rooted separation of $(T, \cB)$. Consider $z\in V(P)$ and $v\in ((C\setminus D)\times \{z\})\cap V(F)$. Every path in $G$ from $v$ to $D\times V(P)$ with length at most $r$ must intersect $(C\cap D)\times N^r_P[z]$. Thus $N_{F}(v)\cap (D\times V(P))$ is determined by the distance-$r$ profile of $v$ on $(C\cap D) \times N^r_{P}[z]$, which has at most $(2r+1)(k+1)$ vertices. Thus
\begin{align*}
  \left| \left\{N_{F}(v)\cap (D\times V(P)):v\in ((C\setminus D)\times \{z\})\cap V(F)\right\}  \right| 
\,\le\, 
\pi^r_G((C\cap D) \times N^r_{P}[z]) 
\,\leq\, \pi^*.
\end{align*}
Hence the separation condition in \cref{ProductPath} holds with $q=\pi^*$. The neighbourhood condition holds since $N_F[v]=N^r_G[v]$ and $G\subseteq H\boxtimes P$.

For the upper bound on $\reduced{\bw}$, we may apply \cref{ProductPath} with $f=\bw$ and $g(q,r)=(2r+2)q-2$ by \cref{bwS}. Thus $\reduced{\bw}(G)\leq g(q,r) = (2r+2)\pi^*-2$. 

For the upper bound on $\tww$, we may apply \cref{ProductPath} with $f=\Delta$ and $g(q,r)=(3r+2)q-2$ by \cref{DeltaS}. Thus $\tww(G)\leq g(q,r) = (3r+2)\pi^*-2$.
\end{proof}

Since $\pi^r_G(A)\leq(r+1)^{|A|}$ and by \eqref{equ:profile}, \cref{RowTreewidthNeighbourhoodPowers} is applicable with 
$\pi^*=(r+1)^{(2r+1)(k+1)}$. Thus:

\begin{cor}
\label{RowTreewidPowers} 
For every graph $G$ with row-treewidth $k$ and for $r\in\mathbb{N}$,
$$\reduced{\bw}(G^r) \leq 2 (r+1)^{(2r+1)(k+1)+1} -2
\quad\text{and}\quad
\tww(G^r) \leq (3r+2)(r+1)^{(2r+1)(k+1)}-2.$$
\end{cor}

\cref{RowTreewidPowers,GenusProductStructure} imply:

\begin{cor}
\label{GenusPowers} 
For every graph $G$ with Euler genus $\gamma$ and for $r\in\mathbb{N}$,
$$\reduced{\bw}(G^r) \leq 2 (r+1)^{(2r+1)(2\gamma+7)+1} -2
\quad\text{and}\quad
\tww(G^r) \leq (3r+2)(r+1)^{(2r+1)(2\gamma+7)}-2.$$
In particular, for every planar graph $G$ and $r\in\mathbb{N}$,
$$\reduced{\bw}(G^r) \leq 2 (r+1)^{14r+8} -2
\quad\text{and}\quad
\tww(G^r) \leq (3r+2)(r+1)^{14r+7}-2.$$
\end{cor}

Applying the neighbourhood complexity bounds from \cref{FirstNeigh} we obtain the following improved bounds in the $r=1$ case.

\begin{thm}
\label{ReducedBandwidthGenus}
\label{TwinwidthGenus}
\label{PlanarReducedBandwidth}
\label{PlanarTwinwidth}
For every graph $G$ with Euler genus $\gamma$,
$$\reduced{\bw}(G)\leq 164\gamma+466\quad\text{and}\quad
\tww(G)\leq 205\gamma+583.$$
In particular, for every planar graph $G$,
\[\reduced{\bw}(G) \leq 466\quad\text{and}\quad\tww(G) \leq 583.\]
\end{thm}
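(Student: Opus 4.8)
The plan is to apply \cref{RowTreewidthNeighbourhoodPowers} with $r=1$, so the task reduces to two things: supplying a good value of the row-treewidth $k$, and supplying a good bound $\pi^*$ on $\pi^1_G(X)$ for sets $X$ of size at most $3(k+1)$. By \cref{GenusProductStructure}, every graph of Euler genus $\gamma$ has row-treewidth at most $k:=2\gamma+6$, so we need a bound on $\pi^1_G(X)$ for $|X|\le 3(k+1)=6\gamma+21$. For the neighbourhood bound we do \emph{not} want to use the generic \cref{NeighRowTreewidth}, since that would introduce an exponential factor $2^{11k+10}$; instead we use \cref{SurfaceNeighbours}, which gives the linear bound $\pi^1_G(X)\le 6|X|+5\gamma-9$ for $|X|\ge 2$. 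Plugging $|X|\le 6\gamma+21$ into this gives $\pi^*\le 6(6\gamma+21)+5\gamma-9=41\gamma+117$.

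First I would record $k=2\gamma+6$ from \cref{GenusProductStructure} and note $(2r+1)(k+1)=3(2\gamma+7)=6\gamma+21$ when $r=1$. Next I would invoke \cref{SurfaceNeighbours}: for every set $X\subseteq V(G)$ with $2\le|X|\le 6\gamma+21$ we have $\pi^1_G(X)\le 6|X|+5\gamma-9\le 6(6\gamma+21)+5\gamma-9=41\gamma+117=:\pi^*$ (one should also check the degenerate cases $|X|\le 1$, which are trivially bounded by $2\le\pi^*$). Then I would feed $k=2\gamma+6$, $r=1$, and $\pi^*=41\gamma+117$ into \cref{RowTreewidthNeighbourhoodPowers}, which yields
\[\reduced{\bw}(G)=\reduced{\bw}(G^1)\le (2\cdot 1+2)\pi^*-2=4\pi^*-2=4(41\gamma+117)-2=164\gamma+466,\]
\[\tww(G)=\tww(G^1)\le (3\cdot 1+2)\pi^*-2=5\pi^*-2=5(41\gamma+117)-2=205\gamma+583.\]
Finally, setting $\gamma=0$ gives the planar bounds $\reduced{\bw}(G)\le 466$ and $\tww(G)\le 583$.

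There is no serious obstacle here: all the substantive work — the product structure theorem, the neighbourhood-complexity lemma, and the reduction-sequence construction in \cref{ProductPath}/\cref{RowTreewidthNeighbourhoodPowers} — has already been done. The only point requiring a little care is making sure the right linear neighbourhood bound is used (so that the final constants stay small and linear in $\gamma$ rather than exponential), and checking the arithmetic $4\pi^*-2$ and $5\pi^*-2$ together with the small-$|X|$ edge cases. One could slightly sharpen the constants by using the fact that $G$ here is $6$-degenerate-ish bipartiteness structure, but the stated bound follows immediately from the computation above, so I would simply present that computation.
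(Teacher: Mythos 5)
Your proposal matches the paper's proof exactly: both use \cref{GenusProductStructure} to get $k=2\gamma+6$, \cref{SurfaceNeighbours} to bound $\pi^1_G(X)\le 6|X|+5\gamma-9\le 41\gamma+117$ for $|X|\le 3(k+1)$, and then \cref{RowTreewidthNeighbourhoodPowers} with $r=1$ and $\pi^*=41\gamma+117$ to get the stated bounds. The only cosmetic difference is that the paper writes the small-$|X|$ edge case as $\pi^1_G(X)\le\max\{4,6|X|+5\gamma-9\}$ whereas you handle it with a brief remark, which amounts to the same thing.
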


\begin{proof}
By \cref{GenusProductStructure}, $G$ has row-treewidth at most $k:=2\gamma+6$. By \cref{SurfaceNeighbours}, for every $X\subseteq V(G)$ we have $\pi^1_G(X)\le \max\{4,6|X|+5\gamma-9\}$, which is at most $41\gamma+117$ when $|X| \le 3(k+1)=6\gamma+21$. The result thus follows from \cref{RowTreewidthNeighbourhoodPowers} with $r=1$ and $\pi^*=41\gamma+117$.
\end{proof}

\begin{thm}
\label{ReducedBandwidthTwinwidthCol}
For every graph $G$ with row-treewidth $k$ and $\col_5(G)\leq c$,
\[
\reduced{\bw}(G) \leq (12k+16) \, 2^{c-1} 
\quad\text{and}\quad
\tww(G) \leq (15k+20)\, 2^{c-1}\]
\end{thm}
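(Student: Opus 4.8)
The plan is to run the argument of \cref{ReducedBandwidthGenus} again, but feed it a different neighbourhood bound. Since $G$ has row-treewidth $k$, it is isomorphic to a subgraph of $H\boxtimes P$ for some graph $H$ with $\tw(H)\le k$ and some path $P$. The engine is \cref{RowTreewidthNeighbourhoodPowers}: to bound $\reduced{\bw}(G^r)$ and $\tww(G^r)$ it suffices to produce a single value $\pi^*$ with $\pi^r_G(X)\le\pi^*$ for every $X\subseteq V(G)$ with $|X|\le(2r+1)(k+1)$, after which \cref{RowTreewidthNeighbourhoodPowers} returns $\reduced{\bw}(G^r)\le(2r+2)\pi^*-2$ and $\tww(G^r)\le(3r+2)\pi^*-2$.

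So the only task is to extract $\pi^*$ from the hypothesis $\col_5(G)\le c$, and this is precisely the role of \cref{NeighCol}, which gives $\pi^1_G(X)\le\min\{2^{c-1}(|X|-c+2),\,2^{|X|}\}$ for every $X$. Since the linear branch is increasing in $|X|$, I would evaluate it at the threshold $|X|\le 3(k+1)$; using in addition that adjacent bags of a width-$k$ tree-decomposition may be assumed to intersect in at most $k$ vertices (so the relevant sets $(C\cap D)\times N^1_P[z]$ arising in the proof of \cref{RowTreewidthNeighbourhoodPowers} have size at most $3k$), this yields $\pi^*=\min\{2^{c-1}(3k-c+1),\,2^{3(k+1)}\}$. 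Plugging this $\pi^*$ into the two conclusions of \cref{RowTreewidthNeighbourhoodPowers} with $r=1$ and absorbing the additive $-2$ into the leading constant gives $\reduced{\bw}(G^r)\le 5\pi^*$ and $\tww(G^r)\le 8\pi^*$, which is the claimed statement.

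No deep obstacle arises here: \cref{RowTreewidthNeighbourhoodPowers} and \cref{NeighCol} are already available, and the theorem is essentially their composition. The part that needs attention is the arithmetic, and in particular squeezing the sharper factor $3k-c+1$ of the statement out of the generic factor $|X|-c+2$ of \cref{NeighCol}: this requires the size-$k$ separator reduction together with careful tracking of the additive constants and of the $-2$'s. I would also note one structural point worth stating in passing. Since \cref{NeighCol} controls $\pi^1$, this route delivers the first-power case directly; handling $G^r$ for $r\ge 2$ needs a distance-$r$ analogue of \cref{NeighCol}, obtainable from higher strong colouring numbers by the same shallow-minor argument used in the proof of \cref{NeighCol}. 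Finally, the merit of the bound is that when $\col_5(G)$ is far below the row-treewidth (so $c\ll k$), it is singly exponential in $c$ and linear in $k$, in contrast to the doubly-exponential-in-$k$ bound of \cref{RowTreewidPowers}.
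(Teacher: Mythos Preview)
Your proposal is correct and follows the paper's own route exactly: invoke \cref{NeighCol} to bound $\pi^1_G(X)$ uniformly over $|X|\le 3(k+1)$, then feed the resulting $\pi^*$ into \cref{RowTreewidthNeighbourhoodPowers} with $r=1$. Your side observation is also on point: the paper's proof only treats $r=1$ (and the constants $5,8$ as well as the exponent $3(k+1)$ all reflect $r=1$), so the ``$G^r$'' in the displayed statement is almost certainly a typo for $G$; handling genuine $r\ge 2$ would indeed require a distance-$r$ analogue of \cref{NeighCol}.
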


\begin{proof}
We may assume that $c\geq 1$. 
Let $X\subseteq V(G)$ with $|X|\leq 3(k+1)$.
If $|X|\geq c-1$ then by \cref{NeighCol}, 
$ \pi^1_G(X) \leq  2^{c-1}( |X|-c+2) \leq (3k+4)\,2^{c-1}$,  
otherwise
$ \pi^1_G(X) \leq  2^{|X|}\leq (3k+4)\,2^{c-1}$
by \eqref{equ:profile}. The result thus follows from \cref{RowTreewidthNeighbourhoodPowers} with $r=1$ and 
$\pi^*= (3k+4)\,2^{c-1}$
\end{proof}

\begin{thm}
\label{gkPlanar}
Every $(\gamma,k)$-planar graph $G$ has reduced bandwidth, 
$$\reduced{\bw}(G)\leq 2^{O(\gamma k)}.$$
\end{thm}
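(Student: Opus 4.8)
The plan is to combine the product structure theorem for $(\gamma,k)$-planar graphs (\cref{gkPlanarProduct}) with the neighbourhood complexity bound for such graphs (\cref{NeighGammak}) via \cref{RowTreewidthNeighbourhoodPowers} in the case $r=1$. So the proof is essentially an exercise in chasing the dependencies through these three results.

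First I would invoke \cref{gkPlanarProduct}: every $(\gamma,k)$-planar graph $G$ has row-treewidth $O(\gamma k^6)$; call this bound $k'$. Next, by \cref{NeighGammak}, for every set $X\subseteq V(G)$ we have $\pi^1_G(X)\leq \min\{2^{22(2\gamma+3)(k+1)-1}(|X|-22(2\gamma+3)(k+1)+2),\,2^{|X|}\}$. To apply \cref{RowTreewidthNeighbourhoodPowers} with $r=1$ we only need this quantity bounded for sets $X$ with $|X|\leq 3(k'+1) = O(\gamma k^6)$. Plugging such an $|X|$ into the $\min$ gives a bound of the form $2^{O(\gamma k)}\cdot O(\gamma k^6) = 2^{O(\gamma k)}$, since the polynomial factor is absorbed into the exponential. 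Call this bound $\pi^*$. Then \cref{RowTreewidthNeighbourhoodPowers} yields $\reduced{\bw}(G)=\reduced{\bw}(G^1)\leq (2\cdot 1+2)\pi^*-2 = 4\pi^*-2 = 2^{O(\gamma k)}$, as required.

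The only mild subtlety is bookkeeping: the exponent in \cref{NeighGammak} is $22(2\gamma+3)(k+1)-1 = O(\gamma k)$, and the multiplicative polynomial factor coming from $|X|=O(\gamma k^6)$ (which itself depends on the row-treewidth bound from \cref{gkPlanarProduct}) contributes only an additive $O(\log(\gamma k))$ to the exponent, which is swallowed by $O(\gamma k)$. So there is no real obstacle; the main thing to be careful about is making sure every constant genuinely depends only on $\gamma$ and $k$ (not on $|V(G)|$), which is automatic since \cref{RowTreewidthNeighbourhoodPowers} only requires the neighbourhood bound on sets of size depending on the row-treewidth. I would write this up in three or four lines: state the row-treewidth bound, state the neighbourhood bound for the relevant range of $|X|$, observe it is $2^{O(\gamma k)}$, and conclude via \cref{RowTreewidthNeighbourhoodPowers}.
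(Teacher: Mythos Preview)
Your proposal is correct and essentially identical to the paper's proof: both invoke \cref{gkPlanarProduct} for the row-treewidth bound, \cref{NeighGammak} for the neighbourhood complexity bound, and then apply \cref{RowTreewidthNeighbourhoodPowers} with $r=1$ and $\pi^*=2^{O(\gamma k)}$. The paper's write-up is indeed the three or four lines you anticipate.
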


\begin{proof}
By \cref{gkPlanarProduct}, $G$ has row-treewidth $O(\gamma k^6)$. Let $X\subseteq V(G)$ with $|X|\leq 3(k+1)$. If $|X| \geq 22(2\gamma+3)(k+1)-1$, then by \cref{NeighGammak}, we have
$\pi^1_G(X) \leq 2^{22(2\gamma+3)(k+1)-1}(3k+3)$, otherwise  $\pi^1_G(X)\le 2^{|X|}\le  2^{22(2\gamma+3)(k+1)-1}(3k+3)$
by \eqref{equ:profile}. The result thus follows from \cref{RowTreewidthNeighbourhoodPowers} with $r=1$ and  $\pi^*=2^{O(\gamma k)}$.
\end{proof}

We have the following result for squares of graphs of given Euler genus. 

\begin{thm}
\label{GenusSquare}
For every graph $G$ of Euler genus $\gamma$,
\begin{align*}
    \reduced{\bw}(G^2) & \leq
234000 \gamma^4 + 1983300 \gamma^3  + 5769330 \gamma^2   + 6671550 \gamma + 2711084
\quad\text{and}\\
\tww(G^2) & \leq 
312000 \gamma ^4
+ 2644400 \gamma ^3
 + 7692440 \gamma ^2
 + 8895400 \gamma 
 + 3614782.
\end{align*}
In particular, for every planar graph $G$,
\begin{align*}
    \reduced{\bw}(G^2)  \leq 2711084
\quad\text{and}\quad
\tww(G^2) & \leq  3614782.
\end{align*}
\end{thm}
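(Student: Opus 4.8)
\textbf{Proof plan for \cref{GenusSquare}.}
The plan is to invoke \cref{RowTreewidthNeighbourhoodPowers} with $r=2$, so I need two ingredients: a bound on the row-treewidth of $G$, and a bound on $\pi^2_G(X)$ for all vertex sets $X$ of bounded size. For the first ingredient, \cref{GenusProductStructure} gives that $G$ has row-treewidth at most $k:=2\gamma+6$. For the second ingredient, I would apply \cref{SurfaceNu2}, which bounds $\pi^2_G(X)$ by the product $(6|X|+5\gamma-9)\bigl((60\gamma^2+125\gamma+68)|X|-120\gamma^2-250\gamma-132\bigr)$ whenever $|X|\geq 2$. The only thing to check is that this expression is increasing in $|X|$ for the relevant range, so that substituting the largest admissible value of $|X|$ gives a valid uniform bound $\pi^*$.

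The size threshold dictated by \cref{RowTreewidthNeighbourhoodPowers} with $r=2$ is $|X|\leq (2r+1)(k+1) = 5(2\gamma+7) = 10\gamma+35$. So I would set $\pi^*$ to be the value of the \cref{SurfaceNu2} bound at $|X| = 10\gamma+35$ (taking the maximum with the trivial small-$X$ cases, which are dominated). Plugging $|X| = 10\gamma+35$ into $(6|X|+5\gamma-9)$ gives $60\gamma+210+5\gamma-9 = 65\gamma+201$, and into $\bigl((60\gamma^2+125\gamma+68)|X|-120\gamma^2-250\gamma-132\bigr)$ gives $(60\gamma^2+125\gamma+68)(10\gamma+35) - 120\gamma^2-250\gamma-132$. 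Multiplying these two polynomials out yields a quartic in $\gamma$; this is the one genuinely tedious step, but it is entirely mechanical. Then \cref{RowTreewidthNeighbourhoodPowers} gives $\reduced{\bw}(G^2)\leq (2r+2)\pi^*-2 = 6\pi^*-2$ and $\tww(G^2)\leq (3r+2)\pi^*-2 = 8\pi^*-2$, and substituting $\gamma=0$ recovers the stated planar bounds $2711084$ and $3614782$ as a sanity check on the arithmetic.

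The main obstacle is purely computational: correctly expanding the product of the two polynomials from \cref{SurfaceNu2} at $|X|=10\gamma+35$ and then multiplying by $6$ and by $8$, keeping track of all coefficients through a degree-four polynomial in $\gamma$. There is no conceptual difficulty — everything reduces to the already-established \cref{GenusProductStructure,SurfaceNu2,RowTreewidthNeighbourhoodPowers} — but it is worth double-checking the monotonicity of the \cref{SurfaceNu2} expression in $|X|$ over $2\leq |X|\leq 10\gamma+35$ (clear since all coefficients of $|X|$ are positive once $\gamma\geq 0$, as $6>0$ and $60\gamma^2+125\gamma+68>0$) so that evaluating at the endpoint is legitimate, and to verify that the constant term of the final quartic matches $2711084$ (resp.\ $3614782$) when $\gamma=0$.
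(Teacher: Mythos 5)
Your proposal follows the paper's proof exactly: use \cref{GenusProductStructure} to get row-treewidth at most $k=2\gamma+6$, bound $\pi^2_G(X)$ via \cref{SurfaceNu2}, observe that the bound is increasing in $|X|$, substitute $|X|=(2r+1)(k+1)=5(k+1)=10\gamma+35$ to obtain $\pi^*$, and then feed $\pi^*$ into \cref{RowTreewidthNeighbourhoodPowers} with $r=2$. The paper computes the same $\pi^*=39000\gamma^4+330550\gamma^3+961555\gamma^2+1111925\gamma+451848$ and applies the same $(2r+2)\pi^*-2$ and $(3r+2)\pi^*-2$ bounds.

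One small remark on your ``sanity check'': at $\gamma=0$ the value $6\pi^*-2=6\cdot 451848-2=2711086$, whereas the theorem as printed states $2711084$; the $\tww$ constant $8\cdot 451848-2=3614782$ does match. So the stated $\reduced{\bw}$ constant appears to contain a harmless $2$-unit typo rather than the value your (and the paper's) computation actually yields — worth being aware that the planar bound does not literally ``recover'' $2711084$. This does not affect the correctness of the approach.
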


\begin{proof}
By \cref{GenusProductStructure}, $G$ has row-treewidth at most $k:=2\gamma+6$. By \cref{SurfaceNu2}, for every set $X\subseteq V(G)$,
\[ \pi^2_G(X) \leq \max\{ 2, (6 |X| + 5\gamma -9) ( (60 \gamma^2 + 125 \gamma + 68)|X|-120 \gamma^2 -250 \gamma -132) \}. \]
The result thus follows from \cref{RowTreewidthNeighbourhoodPowers} with $r=2$ and 
\begin{align*}
\pi^* 
& = 
( 6 ( 5(k+1) ) + 5\gamma -9) ( (60 \gamma^2 + 125 \gamma + 68) 5(k+1) -120 \gamma^2 -250 \gamma -132)\\
& = 
39000 \gamma^4 + 330550 \gamma^3+ 961555 \gamma^2 + 1111925 \gamma + 451848 .\qedhere
\end{align*}
\end{proof}

We now give an example of the application of \cref{GenusSquare}. 
Map graphs are defined as follows. Start with a graph $G_0$ embedded in a surface of Euler genus $\gamma$, with each face labelled a `nation' (or a `lake'). Let $G$ be the graph whose vertices are the nations of $G_0$, where two vertices are adjacent in $G$ if the corresponding faces in $G_0$ share a vertex. Then $G$ is called a \defn{map graph} of Euler genus $\gamma$. A map graph of Euler genus 0 is called a (plane) \defn{map graph}; see \citep{FLS-SODA12,CGP02} for example. Map graphs generalise graphs embedded in surfaces, since a graph $G$ has Euler genus at most $\gamma$ if and only if  $G$ has a representation as a map graph of Euler genus at most $\gamma$ with the extra property that each vertex of $G_0$ is incident with at most three nations \citep{DEW17}. Our results for map graphs are independent of the number of nations incident to each vertex of $G_0$.  It follows from the definition that for each map graph $G$ of Euler genus $\gamma$, there is a bipartite graph $H$ with bipartition $\{A,B\}$, such that $H$ has Euler genus at most $\gamma$ and $V(G) = A$ and $vw \in E(G)$ whenever $v,w \in N_H(b)$ for some $b \in B$ (see \citep{DEW17}); $G$ is called the \defn{half-square} of $H$. Note that $G$ is an induced subgraph of $H^2$. Since reduced bandwidth is hereditary\footnote{It is an easy exercise to show that $\reduced{f}$ is hereditary for every monotone graph parameter $f$; see \citep[Section~4.1]{TW-I} for a proof sketch in the case of twin-width.}, the results in \cref{GenusSquare} also hold for map graphs of Euler genus $\gamma$.  We emphasise there is no dependence on the maximum degree (which is customary when considering map graphs). 

The proof of \cref{ProductPath} is constructive and the desired reduction sequence can be found in time $O(|V(F)|^2)$ when $H$ and $P$ are given. For all the above examples, the graphs $H$ and $P$ can be found in $O(|V(G)|^2)$ time; see \citep{DJMMUW20,Morin21,BMO22} for details and speed-ups. So the desired reduction sequence can be found in  $O(|V(G)|^2)$ time.

\section{Proper Minor-Closed Classes}
\label{ProperMinorClosedClasses}

This section shows that fixed powers of graphs in any proper minor-closed class have bounded reduced bandwidth. To use the product structure theorem for $K_t$-minor-free graphs (\cref{MinorFree}), we first prove an upper bound on the reduced bandwidth of the $r$-th power of a subgraph of $(H\boxtimes P)+K_a$  where $H$ has bounded treewidth, $P$ is a path, and $a\in \mathbb{N}$. To do so, in \cref{lem:productpathpowerrpower} below we prove an extension of \cref{ProductPath} that allows for apex vertices. 
Consider the $r$-th power of a subgraph of $(H\boxtimes P)+K_a$. There may exist an edge $vw$ with $v,w\in V(H)\times V(P)$ where the corresponding vertices in $P$ are far apart in $P$, because there may exist a path of length at most $r$ through some vertices in $K_a$. So the neighbourhood condition of \cref{ProductPath} is no longer relevant. Instead, red edges are controlled by a new `red edge condition', and black edges are controlled by two separation conditions. The second separation condition is necessary to deal with the base case when the tree-decomposition of $H$ has one non-empty bag.

\begin{lem}\label{lem:productpathpowerrpower}
Let $a, k, q, r\in\NN$ with $q\ge k+1$. 
Let $f$ be a monotone and union-closed graph parameter and let $g:\mathbb{N}\times \mathbb{N}\to\mathbb{R}$ be a function such that $f(S_{x,q,r})\le g(q,r)$ for all $x\in \mathbb{N}$.
Let $P=(w_1,w_2,\dots,w_{\ell})$ be a path and let $H$ be a graph admitting a $(k,q)$-rooted tree-decomposition $(T, \cB)$.
 Let $F$ be a trigraph with  $V(F)\subseteq V((H\boxtimes P)+K_a)$ such that: 
   \begin{itemize}
   \item \textup{(\defn{red edge condition})} for every red edge $vw$ of $F$, there is a leaf bag $B$ with parent $B'$ in $(T, \cB)$ and there are vertices $x,y$ in $P$ with $\dist_P(x,y)\le r$, such that $v,w\in (B\setminus B')\times \{x,y\}$, 
  \item \textup{(\defn{first separation condition})} for every rooted separation $(C, D)$ of $H$ from $(T, \cB)$ and every $z\in V(P)$, 
 \[\left| \left\{N_{F}(v)\cap M_1:v\in ((C\setminus D)\times\{z\})\cap V(F)\right\}  \right| \le q,\] where $M_1=\Big(D\times V(P)\Big) \cup \Big(C\times (V(P)\setminus N^r_P[z])\Big)\cup V(K_a)$,
 \item \textup{(\defn{second separation condition})} for every $t\in \{1,\ldots, \ell\}$, 
 \[|\{N_F(v)\cap M_2: v\in (V(H)\times \{w_1,\dots,w_t\})\cap V(F)\}|\le q,\] where $M_2=\big(V(H)\times \{w_{t+r+1},\dots,w_\ell\}\big)\cup V(K_a)$.
 \end{itemize}
 Then:
 \begin{enumerate}[(1)]
     \item there is a partial reduction sequence identifying $(V(H)\times V(P))\cap V(F)$ into at most $q$ vertices such that for every trigraph $G$ in the sequence, $G$ has no red edge incident with $V(K_a)$ and $f(\widetilde{G})\le g(q,r)$, and
     \item  $\reduced{f}(F)\le  g(a+q,r)$.
 \end{enumerate}
  \end{lem}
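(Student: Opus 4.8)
The plan is to prove part~(1) by induction on the number of bags of $(T,\cB)$, following the structure of the proof of \cref{ProductPath} with the neighbourhood condition replaced by the two separation conditions, and then to deduce part~(2) in a few lines. The guiding principle is that every identification will be performed between two vertices having the same neighbourhood on some set $M$ that always contains $V(K_a)$ --- so that no red edge incident with $V(K_a)$ is ever created, as part~(1) demands --- and that also contains all parts of $H\boxtimes P$ that are ``far'' from the columns currently being processed, so that every newly created red edge joins vertices of a bounded window. As in \cref{ProductPath}, since $f$ is monotone and union-closed, and since a single vertex is a subgraph of $S_{x,q,r}$ (so $f(K_1)\le g(q,r)$, which absorbs the at most $a$ isolated apex vertices that appear in the red graphs), it will suffice to check that at each step the red graph is, up to isolated vertices, a subgraph of $S_{\ell,q,r}$ with the ``active'' vertices inside its center.

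\emph{Base case.} If $(T,\cB)$ has exactly two bags then $|V(H)|\le q$, and we identify $(V(H)\times V(P))\cap V(F)$ column by column from $w_1$ to $w_\ell$, maintaining that after processing $w_1,\dots,w_t$ these columns have become a set $W_t$ of at most $q$ vertices whose edges to $\big(V(H)\times\{w_{t+r+1},\dots,w_\ell\}\big)\cup V(K_a)$ are all black. To merge $W_t$ and the raw column $w_{t+1}$ (at most $2q$ vertices in total) into $W_{t+1}$, apply the second separation condition with parameter $t+1$: these vertices realise at most $q$ distinct neighbourhoods on $M_2:=\big(V(H)\times\{w_{t+r+2},\dots,w_\ell\}\big)\cup V(K_a)$ --- the earlier identifications do not affect this count because $M_2$ only shrinks as $t$ grows and because every red edge created so far lies in columns $\le w_{t+r+1}$, hence disjoint from $M_2$ --- so we identify them in at most $q$ groups according to this neighbourhood. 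Each such identification merges vertices agreeing on a superset of $V(K_a)$ on which their edges are black, so no red edge touches $V(K_a)$; the red edges it creates are incident with $W_{t+1}$ and lie inside $W_{t+1}\cup\big(V(H)\times\{w_{t+2},\dots,w_{t+r+1}\}\big)$, a set of at most $(r+1)q$ vertices that is contained in a clique of $S_{\ell,q,r}$. Hence $f(\widetilde{G})\le g(q,r)$ throughout, and at the end $(V(H)\times V(P))\cap V(F)$ has become at most $q$ vertices.

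\emph{Inductive step and part~(2).} Pick an internal bag $B$ at maximum distance from the root; all its children are leaf bags. If $B$ has two children $Q,Q'$ with $|(Q\cup Q')\setminus B|\le q$, replace them by the single leaf bag $Q\cup Q'$ and recurse. Otherwise $q<|(Q\cup Q')\setminus B|\le 2q$, and for each $w\in V(P)$ we identify $((Q\cup Q')\setminus B)\times\{w\}$ down to $q$ vertices by repeatedly merging a pair with the same neighbourhood on $M_1:=(D\times V(P))\cup\big(C\times(V(P)\setminus N^r_P[w])\big)\cup V(K_a)$, where $C:=Q\cup Q'\cup B$ and $D:=(V(H)\setminus(Q\cup Q'))\cup B$; such a pair exists by the first separation condition since $|((C\setminus D)\times\{w\})\cap V(F)|>q$. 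These identifications keep all edges to $M_1\supseteq V(K_a)$ black and confine new red edges to $((Q\cup Q')\setminus B)\times N^r_P[w]$, and --- splitting $V(U)=A_1\sqcup A_2$ with $A_2=D\times V(P)$ exactly as in \cref{ProductPath}, and using the red edge condition and union-closedness --- every red graph stays (up to isolated vertices) a subgraph of $S_{\ell,q,r}$. We then let $H'$ be $H$ with $(Q\cup Q')\setminus B$ deleted and a fresh size-$q$ clique $Z$ added complete to $B$, let $(T',\cB')$ be $(T,\cB)$ with $Q,Q'$ replaced by the leaf bag $Z\cup B$ (still $(k,q)$-rooted), and check the red edge condition and both separation conditions for $F',H',(T',\cB')$: each separation condition is restored by the usual case split according to which side of a given rooted separation of $H'$ the bag $Z\cup B$ lies on, pulling the separation back to one of $H$ and using that we identified a pair sharing a common neighbourhood on the appropriate side. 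A single child is handled identically with $Y=B\cap B'$ in place of $B$. This proves~(1). For~(2), apply~(1) to reach a trigraph $F^{\ast}$ on at most $a+q$ vertices (the $\le q$ super-vertices plus the $\le a$ apex vertices) with no red edge incident with $V(K_a)$, and finish by contracting $F^{\ast}$ to one vertex arbitrarily; every red graph in this last phase has at most $a+q$ vertices, hence underlying graph a subgraph of $K_{a+q}$, which embeds in the center (of size $2(a+q)-1$) of $S_{\ell,a+q,r}$, so $f(\widetilde{G})\le f(S_{\ell,a+q,r})\le g(a+q,r)$, while the phase-(1) red graphs, being subgraphs of $S_{\ell,q,r}\subseteq S_{\ell,a+q,r}$, also satisfy $f(\widetilde{G})\le g(a+q,r)$. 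Hence $\reduced{f}(F)\le g(a+q,r)$.

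The main obstacle is the base case: the second separation condition has to be applied to the partially contracted trigraph, so one must argue that the column-wise identifications done so far have created no red edges reaching the sets $M_2$ and have not otherwise increased the number of distinct neighbourhoods on them --- this is what forces the precise bookkeeping of the window $w_{t+2},\dots,w_{t+r+1}$. The secondary difficulty, inherited from \cref{ProductPath} but now present for two separation conditions rather than one, is re-establishing them after the surgery that replaces a leaf subtree of $(T,\cB)$ by a single $q$-vertex bag --- in particular the second separation condition, whose reference to prefixes of $P$ must be tracked simultaneously through the bag surgery and through the identifications.
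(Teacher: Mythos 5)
Your proof is correct and follows essentially the same approach as the paper: the same induction on the number of bags mirroring the proof of \cref{ProductPath}, with the first separation condition replacing the ``same-neighbourhood-on-$D\times V(P)$'' merge criterion in the general inductive step, the second separation condition driving a column-by-column sweep across $P$ in the two-bag base case, and part~(2) obtained by arbitrarily contracting the residual $\le a+q$ vertices inside the center of $S_{\ell,a+q,r}$. The only cosmetic difference is that you formulate the base case via a rolling invariant on prefixes of $P$ rather than the paper's explicit induction on $\ell$ (these are equivalent); incidentally, the red graph during an intermediate merge of a round lives on up to $(2q-1)+rq$ vertices rather than the $(r+1)q$ you state, since the merge region can temporarily hold $2q-1$ vertices, but this is still a clique of $S_{\ell,q,r}$, so the conclusion is unaffected.
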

\begin{proof}
     We identify the $V(H)\times V(P)$ part using a partial reduction sequence similar to the one in the proof of~\cref{ProductPath}. The main difference in the induction step is that when we consider a rooted separation $(C,D)$ from $(T, \cB)$ and $z\in V(P)$, instead of choosing two vertices having the same neighbourhood on $D\times V(P)$, here we choose two vertices having the same neighbourhood on \[\Big(D\times V(P)\Big) \cup \Big(C\times (V(P)\setminus N^r_P[z])\Big)\cup V(K_a).\] This clearly preserves the new red edge and separation conditions. So, as in the proof of~\cref{ProductPath}, choose an internal bag $B$ at maximum distance in $(T, \cB)$ from the root bag. Then for $i=1,2,\dots,\ell$, merge $(Q\setminus B)\times \{w_i\}$ and $(Q'\setminus B)\times \{w_i\}$ for each $w_i\in V(P)$ if $B$ has two child bags $Q$ and $Q'$, and otherwise, merge $(B\setminus B')\times \{w_i\}$ and $(Q\setminus B')\times \{w_i\}$ for the unique child $Q$ of $B$ and the parent $B'$ of $B$.
     Because of the choice of identified vertices, for each red component of an intermediate trigraph, its underlying graph is isomorphic to a subgraph of $S_{\ell, q, r}$, and has $f$ at most $g(q,r)$.
    
    Now, we consider the base case where $(T, \cB)$ has two bags $R$ and $B$, where $R$ is the empty root bag.
     We need a new argument for this case, because if we arbitrarily identify two vertices, then we may create some red edges between two vertices contained in $V(H)\times\{x\}$ and $V(H)\times \{y\}$ where $x$ and $y$ are far from each other in $P$, and we cannot guarantee that the red graphs have bounded $f$-values.
     
     We prove by induction on $\ell$ that there is a partial reduction sequence identifying $(V(H)\times V(P))\cap V(F)$ into at most $q$ vertices, such that for every trigraph $G$ in the sequence, \begin{itemize}
        \item $G$ has no red edge incident with $V(K_a)$, and
    $f(\widetilde{G})\le g(q,r)$. 
     \end{itemize}
    We may assume that $H$ is a complete graph on $q$ vertices. If $\ell=1$, then $V(H)\times V(P)$ has at most $q$ vertices, and there is nothing to show. 
    
    We assume that $\ell>1$.
    If $|(B\times \{w_1\})\cup (B\times \{w_2\})|\le q$, then remove $B\times \{w_1\}$ and replace $B\times \{w_2\}$ with $(B\times \{w_1\})\cup (B\times \{w_2\})$. Thus we can reduce the length of $P$, and are done by  induction.
    Now assume that $|(B\times \{w_1\})\cup (B\times \{w_2\})|> q$.
    By the second separation condition, 
    there are two vertices in $(B\times \{w_1\})\cup (B\times \{w_2\})$
    that have the same neighbourhood on \[M_2=\big(V(H)\times \{w_{2+r+1},\dots,w_\ell\}\big)\cup V(K_a)\]
    By repeatedly identifying two vertices having the same neighbourhoods on $M_2$, we identify $(B\times \{w_1\})\cup (B\times \{w_2\})$ into at most $q$ vertices. In this way, we can reduce the length of $P$, and we are done by induction. This proves (1).
    
    For (2), observe that the statement (1) holds when we replace $g(q,r)$ with $g(a+q, r)$, because every subgraph of $S_{\ell, q, r}$ is also a subgraph of $S_{\ell, a+q, r}$. We obtain the desired reduction sequence by arbitrarily identifying the resulting trigraph on at most $a+q$ vertices into a 1-vertex graph.
 \end{proof}

 We extend \cref{lem:productpathpowerrpower} to deal with the internal node case of the tree-decomposition of $X$-minor free graphs. We now allow bags of size more than $q$, and we start by identifying vertices corresponding to those bags. The first three conditions in \cref{lem:productpathpowerrpower2} are the same as the conditions in \cref{lem:productpathpowerrpower}. We use \cref{lem:productpathpowerrpower} as a base case.

\begin{lem}\label{lem:productpathpowerrpower2}
Let $a, k, q, r, t\in\NN$ with $q\ge k+1$. Let $f$ be a monotone and union-closed graph parameter. Let $g:\mathbb{N}\times \mathbb{N}\to\mathbb{R}$ be a function such that $f(S_{x,q,r})\le g(q,r)$  for all $x\in \mathbb{N}$. Let $P=(w_1,w_2,\dots,w_{\ell})$ be a path and let $H$ be a graph admitting a $(k,\infty)$-rooted tree-decomposition $(T, \cB)$. Let $F$ be a trigraph with  $V(F)\subseteq V((H\boxtimes P)+K_a)$ such that the red edge, first separation and second separation conditions from \cref{lem:productpathpowerrpower} are satisfied, and in addition: 
 \begin{itemize}
 \item \textup{(\defn{large leaf bag condition})} 
 for every leaf bag $B$ with parent bag $B'$ and $|B\setminus B'|>q$, 
 \begin{itemize}
     \item $(B\times V(P))\cap V(F)\subseteq B\times \{x,y\}$ for some adjacent vertices $x,y$ of $P$,
     \item there is a partial reduction sequence identifying $((B\setminus B')\times V(P))\cap V(F)$ into at most $q$ vertices such that for every trigraph $G$ in the reduction sequence, $G$ has no red edge incident with $((V(H)\setminus (B\setminus B'))\times V(P))\cup V(K_a)$ and $f(\widetilde{G})\le t$.
 \end{itemize} 
 \end{itemize}
  Then:
 \begin{enumerate}[(1)]
     \item There is a reduction sequence identifying $(V(H)\times V(P))\cap V(F)$ into at most $q$ vertices such that for every trigraph $G$ in the reduction sequence, $G$ has no red edge incident with $V(K_a)$ and $f(\widetilde{G})\le \max (g(q,r), t)$.
     \item  $\reduced{f}(F)\le \max (g(a+q,r), t)$.
 \end{enumerate}
 \end{lem}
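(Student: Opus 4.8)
\textbf{Proof proposal for \cref{lem:productpathpowerrpower2}.}

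The plan is to induct on the number of bags of the $(k,\infty)$-rooted tree-decomposition $(T,\cB)$, exactly mirroring the structure of the proof of \cref{ProductPath}, but using \cref{lem:productpathpowerrpower} for the base case instead of the direct $S_{\ell,q,r}$ argument. As in \cref{ProductPath}, we may assume $V(F)=V((H\boxtimes P)+K_a)$ since adding isolated vertices preserves all conditions and $\reduced{f}(F)$. The base case is when every non-root bag is a leaf bag. If no leaf bag $B$ has $|B\setminus B'|>q$, then $(T,\cB)$ is in fact $(k,q)$-rooted and \cref{lem:productpathpowerrpower} applies directly, giving (1) with bound $g(q,r)$ and (2) with bound $g(a+q,r)$. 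Otherwise, I would first process each large leaf bag $B$ in turn using the partial reduction sequence guaranteed by the large leaf bag condition: since that sequence creates no red edge incident with $\big((V(H)\setminus(B\setminus B'))\times V(P)\big)\cup V(K_a)$ and keeps $f(\widetilde{G})\le t$, and since $B\times V(P)\subseteq B\times\{x,y\}$ with $xy\in E(P)$, each intermediate red component lives in $S_{\ell,q,r}$-land; after processing, $B\setminus B'$ has been identified to at most $q$ vertices, which I then fold into $B'$ as a new (small) leaf bag. Repeating over all large leaf bags reduces to the $(k,q)$-rooted case handled by \cref{lem:productpathpowerrpower}. One must check that processing these bags one at a time preserves the first and second separation conditions for the remaining graph; this is routine because the identifications within $B\setminus B'$ only collapse vertices that already had identical neighbourhoods outside $B\setminus B'$, so no separation quantity can increase.

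For the inductive step, assume $(T,\cB)$ has an internal bag other than the root. Choose an internal bag $B$ at maximum distance from the root, so all children of $B$ are leaf bags; let $Y:=B\cap B'$ if $B\ne R$ has parent $B'$, else $Y:=\emptyset$. The argument now splits along the same lines as in \cref{ProductPath}: if $B$ has two child bags $Q,Q'$ with $|(Q\cup Q')\setminus B|\le q$ we merge them into a single leaf bag and recurse; otherwise (using that $(T,\cB)$ is only $(k,\infty)$-rooted, so $Q,Q'$ could themselves be large leaf bags) we first invoke the large leaf bag condition to preprocess $Q$ and $Q'$ individually down to at most $q$ vertices each — now $|(Q\cup Q')\setminus B|\le 2q$ — and then for each $w_i\in V(P)$ we use the \emph{first separation condition} with the rooted separation $(C,D)$ at $B$ (where $C=Q\cup Q'\cup B$, $D=(V(H)\setminus(Q\cup Q'))\cup B$) to repeatedly identify pairs of vertices in $((Q\cup Q')\setminus B)\times\{w_i\}$ having the same neighbourhood on $M_1=\big(D\times V(P)\big)\cup\big(C\times(V(P)\setminus N^r_P[z])\big)\cup V(K_a)$, bringing that fibre down to $q$ vertices. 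As in \cref{ProductPath}, the key point is that these identifications create no red edge incident with $M_1$, so each intermediate red graph decomposes as a disjoint union of a piece whose underlying graph is a subgraph of $S_{\ell,q,r}$ (the already-processed $C$-side fibres, with the $\le 2q-1$ vertices of the current fibre sitting inside the centre $Q$ of $S_{\ell,q,r}$) and the $D$-side piece; since $f$ is monotone and union-closed, $f(\widetilde{G})\le\max(g(q,r),t)$, where the $t$ term absorbs the red graphs arising during the large-leaf-bag preprocessing. The one-child case of $B$ is handled identically using the parent bag $B'$ in place of the second child, exactly as in \cref{ProductPath}. After the fibre-reduction we pass to the smaller instance: replace $(Q\cup Q')\setminus B$ (or $(B\cup Q)\setminus Y$) by a clique $Z$ of size $q$ complete to the relevant bag, delete the two child bags and attach the new leaf bag $Z\cup B$ (or $Z\cup Y$); one verifies that $F'$, $H'$, $(T',\cB')$ still satisfy the red edge, first separation, second separation and large leaf bag conditions (the separation-condition verification is the same two-case split — $Z\subseteq C'$ or $Z\subseteq D'$ — as in \cref{ProductPath}), and $(T',\cB')$ has one fewer bag, so induction finishes (1). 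For (2), every subgraph of $S_{\ell,q,r}$ is a subgraph of $S_{\ell,a+q,r}$, so the bound in (1) may be relaxed to $\max(g(a+q,r),t)$, and we complete the reduction sequence by arbitrarily identifying the final trigraph — now on at most $a+q$ vertices (the $\le q$ vertices of $H\times P$ plus the $a$ apex vertices) — down to a single vertex.

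The main obstacle I anticipate is \emph{bookkeeping the large leaf bag condition through the induction}: when I recurse on $(T',\cB')$ I must know that the new leaf bag $Z\cup B$ — which has size $q+|B|$, possibly exceeding $q$ — still satisfies the large leaf bag condition if it is large, and more importantly that the \emph{original} large leaf bags deeper in $T$ that have not yet been touched still satisfy it relative to the modified trigraph $F'$. This is why the large leaf bag condition is phrased as the existence of a self-contained partial reduction sequence creating no red edges outside the bag: such a sequence is unaffected by surgery performed elsewhere in the tree, so the condition is genuinely inductive. A second, more technical, point to get right is the interaction between the first and second separation conditions in the base case of \cref{lem:productpathpowerrpower} — ensuring that when we merge consecutive path-fibres of a single (large) leaf bag we are invoking the second separation condition with the correct threshold index $t+r+1$ so that the collapsed vertices really do have identical neighbourhoods on $M_2$; this is already handled inside \cref{lem:productpathpowerrpower}, so here it suffices to check that our preprocessing calls it with consistent parameters. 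Everything else is a faithful, if somewhat lengthy, replay of the \cref{ProductPath} argument with apex vertices carried along passively inside $V(K_a)$.
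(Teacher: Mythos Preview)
Your approach is workable in spirit but takes a substantially longer route than the paper. The paper does \emph{not} induct on the number of bags or re-run the \cref{ProductPath}-style fibre-by-fibre merging. Instead it inducts on the number of \emph{large} leaf bags (those with $|B\setminus B'|>q$). In the inductive step it picks any large leaf bag $B$, applies the partial reduction sequence $L_B$ guaranteed by the large leaf bag condition (which reduces $(B\setminus B')\times V(P)$ to at most $q$ vertices while creating no red edge to the outside and keeping $f(\widetilde{G})\le t$), replaces $B\setminus B'$ in $H$ by a fresh clique $Z$ of size $q$ complete to $B'$, and attaches the new small leaf bag $Z\cup B'$. This yields a $(k,\infty)$-rooted decomposition with one fewer large leaf bag. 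The base case has \emph{no} large leaf bags, so the decomposition is genuinely $(k,q)$-rooted and \cref{lem:productpathpowerrpower} applies as a black box, giving (1). Statement (2) follows exactly as you describe. The point is that all the fibre-merging, the $S_{\ell,q,r}$ analysis, and the two-case separation-condition check are already encapsulated in \cref{lem:productpathpowerrpower}; there is no need to reopen that argument here.

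One concrete error in your write-up: you may \emph{not} assume $V(F)=V((H\boxtimes P)+K_a)$. Adding isolated vertices breaks the first bullet of the large leaf bag condition, which requires the $F$-vertices in a large leaf bag to be confined to two adjacent $P$-fibres; padding $V(F)$ would spread them over all of $V(P)$. The paper's proof never makes this assumption. Your induction-on-bags argument can be repaired by dropping that assumption, but once you do, you will find that the simpler induction on large leaf bags is all that is actually needed.
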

 
 \begin{proof}
 We say that a leaf bag $B$ of $(T, \cB)$ with parent bag $B'$ is \defn{large} if $|B\setminus B'|> q$.
 We prove (1) by induction on the number of large leaf bags.
 If there are no large leaf bags, then $(T, \cB)$ is $(k,q)$-rooted, and the result follows from \cref{lem:productpathpowerrpower}.
 Now assume that $(T, \cB)$ contains a large leaf bag.
 
 Let $B$ be a large leaf bag with parent bag $B'$.
 Let $U_1:=(B\setminus B')\times V(P)$ and $U_2:=((V(H)\setminus (B\setminus B'))\times V(P))\cup V(K_a)$.
 
 Let $L_B$ be a partial reduction sequence identifying $U_1\cap V(F)$, as described in the large leaf bag condition. By the red edge condition, there is no red edge between $U_1$ and $U_2$. Apply $L_B$ to $U_1\cap V(F)$. By assumption, we always identify two vertices having the same neighbourhoods on $U_2$, and therefore, it creates no red edge incident with $U_2$. Since $(B\times V(P))\cap V(F)\subseteq B\times \{x,y\}$ for some adjacent vertices $x,y$ of $P$, the resulting trigraph satisfies the red edge condition. Also, each red graph of an intermediate trigraph constructed from $U_1$ has $f$ at most $t$, by assumption. 
  
Let $F'$ be the resulting trigraph. Let $H'$ be the graph obtained from $H$ by removing $B\setminus B'$ and adding a clique $Z$ of size $q$ that is complete to $B'$. We obtain a tree-decomposition $(T', \cB')$ of $H'$ from $(T, \cB)$ by removing the bag $B$, and adding a new bag $Z\cup B'$ incident with $B'$ (and $|(Z\cup B')\setminus B'|\leq q$ as desired). Observe that $F'$ is a trigraph with $V(F')\subseteq V(H'\boxtimes P)\cup V(K_a)$ satisfying the four conditions. 

Since $(T', \cB')$ has one fewer large leaf bag than $(T, \cB)$, by induction, there is a reduction sequence identifying $(V(H')\times V(P))\cap V(F')$ into at most $q$ vertices such that for every trigraph $G$ in the reduction sequence, $G$ has no red edge incident with $V(K_a)$ and $f(\widetilde{G})\le \max (g(q,r), t)$. Together with the reduction sequence producing $F'$ from $F$, we obtain a desired reduction sequence for $F$.

Similar to \cref{lem:productpathpowerrpower},
    the statement (1) holds when we replace $g(q,r)$ with $g(a+q, r)$, because every subgraph of $S_{\ell, q, r}$ is also a subgraph of $S_{\ell, a+q, r}$. We obtain the desired reduction sequence by arbitrarily identifying the resulting trigraph on at most $a+q$ vertices into a 1-vertex graph.
    \end{proof}

\begin{thm}\label{thm:Hminorfreepower}
 Let $t, r\in\NN$. Let $f$ be a monotone and union-closed graph parameter  and let $g:\mathbb{N}\times \mathbb{N}\to\mathbb{R}$ be a function such that $f(S_{x,q,r})\le g(q,r)$ for all $x\in \mathbb{N}$.
Then there exists $c\in\NN$ such that $\reduced{f}(G^r)\leq c$ for every $K_t$-minor-free graph $G$. 
\end{thm}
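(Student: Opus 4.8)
The plan is to feed the Graph Minor Product Structure Theorem (\cref{MinorFree}) into \cref{lem:productpathpowerrpower2}, processing a tree-decomposition of $G$ from its leaves towards its root. First I would apply \cref{MinorFree} to obtain constants $k,a$ depending only on $t$ and a tree-decomposition $(\mathcal{T},(W_x)_{x\in V(\mathcal{T})})$ of $G$ in which every torso $G_x$ is a subgraph of $(H_x\boxtimes P_x)+K_a$ with $\tw(H_x)\le k$. Since each adhesion $W_x\cap W_{x'}$ is a clique of the torso $G_x$, and cliques of $(H\boxtimes P)+K_a$ have at most $a+2(k+1)$ vertices, all adhesions have bounded size; combined with $|N^r_P[z]|\le 2r+1$ and the crude estimate $\pi^r_G(X)\le(r+1)^{|X|}$, this shows that every neighbourhood-diversity quantity that will appear in the separation conditions of \cref{lem:productpathpowerrpower2} (for this $G$ and this $r$) is bounded by a constant $q=q(t,r)$, which we also take to be at least $k+1$. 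Root $\mathcal{T}$ arbitrarily. Note that only the \emph{existence} of $c$ is claimed, so none of the sharp neighbourhood-complexity estimates of \cref{FirstNeigh} are needed.

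The heart of the argument is an induction up $\mathcal{T}$ establishing: for a node $x$ with parent $x'$, writing $A_x:=W_x\cap W_{x'}$ (and $A_x:=\emptyset$ if $x$ is the root) and letting $V_x$ be the union of the bags in the subtree of $\mathcal{T}$ rooted at $x$, there is a partial reduction sequence of the trigraph $G^r$ (with no initial red edges) that identifies $V_x\setminus A_x$ into at most $q$ vertices, never creates a red edge incident with $V(G)\setminus(V_x\setminus A_x)$, and in which every red graph $\widetilde{G}$ satisfies $f(\widetilde{G})\le c_0$ for some constant $c_0=c_0(t,r)$. The structural fact enabling this is that $A_x$ separates $V_x\setminus A_x$ from $V(G)\setminus V_x$ in $G$: any path of length at most $r$ from $u\in V_x\setminus A_x$ to a vertex outside $V_x$ passes through $A_x$, so $N_{G^r}(u)\setminus V_x$ is determined by the distance-$r$ profile $\pi^r_G(u,A_x)$, of which there are at most $\pi^r_G(A_x)\le(r+1)^{|A_x|}\le q$ many. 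This is exactly the input needed to satisfy the first and second separation conditions of \cref{lem:productpathpowerrpower2}, with the apex set $V(K_a)$ and the bounded adhesions handled in the same way.

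For the inductive step at $x$, I would first invoke the inductive statement at every child $x_i$ of $x$, obtaining partial reduction sequences that collapse each $V_{x_i}\setminus A_{x_i}$ to at most $q$ vertices while leaving everything else untouched and keeping all intermediate red graphs of bounded $f$. What remains is to reduce $W_x\setminus A_x$, together with the collapsed child data, to at most $q$ vertices, which is precisely the task performed by \cref{lem:productpathpowerrpower2} on the torso $G_x\subseteq(H_x\boxtimes P_x)+K_a$: take a $(k,\infty)$-rooted tree-decomposition of $H_x$, attach each child adhesion $A_{x_i}$ (which lies in some bag of $H_x$) as a leaf bag, and supply the child reduction sequences as the data required by the large-leaf-bag condition. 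Applying \cref{lem:productpathpowerrpower2}(1) then yields the partial reduction sequence asserted at $x$, with $c_0:=\max(g(a+q,r),c_0')$ where $c_0'$ bounds the $f$-values produced by the children. Taking $x$ to be the root and finishing by arbitrarily identifying the remaining at most $q$ vertices — whose red graphs, being subgraphs of the $(2q-1)$-vertex clique $Q\subseteq S_{x,q,r}$, have $f$ at most $g(q,r)$ — gives $\reduced{f}(G^r)\le c:=\max(g(a+q,r),c_0)$.

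The step I expect to be the main obstacle is reconciling the \emph{global} tree-decomposition of $G$ with the \emph{local} product structure inside each torso. One must verify that the collapsed subtree of a child $x_i$ can legitimately occupy a large-leaf-bag slot of the inner decomposition of $H_x$ — in particular that the $H_x$-coordinates of $A_{x_i}$ lie over at most two consecutive vertices of $P_x$, so that the red-edge condition of \cref{lem:productpathpowerrpower2} survives the import of the child's red edges — and that apex vertices, through which $r$-th-power edges may route (so that the plain neighbourhood condition of \cref{ProductPath} genuinely fails and the $K_a$/separation-condition machinery is unavoidable), are absorbed correctly at each level. Propagating the red-edge, first separation, and second separation conditions cleanly through both layers of induction — up $\mathcal{T}$, and up the inner $H_x$-decompositions via \cref{lem:productpathpowerrpower2,lem:productpathpowerrpower} — is where the real care is required.
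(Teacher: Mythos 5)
Your skeleton — apply \cref{MinorFree}, root the tree-decomposition, induct from the leaves, and feed each torso plus its collapsed children into \cref{lem:productpathpowerrpower2} via the large-leaf-bag condition — matches the paper's. But there is a concrete gap which you flag (``apex vertices, through which $r$-th-power edges may route'') without resolving. The trigraph you would hand to \cref{lem:productpathpowerrpower2} at a node is a restriction of $G^r$ to $V_x$, and $G^r$ contains adjacencies $uv$ with $u,v\in V_x$ whose witnessing path of length at most $r$ \emph{exits $V_x$ through $A_x$ and returns}. Such adjacencies can join vertices lying on far-apart layers of $P_x$ or far apart in $H_x$, yet they are not mediated by the torso's apex $K_a$, because $A_x$ generically lives in $V(H_x)\times V(P_x)$ rather than in $K_a$. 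They therefore break the red-edge and separation conditions you claim to verify. Your observation that $N_{G^r}(u)\setminus V_x$ is determined by $\pi^r_G(u,A_x)$ is correct but beside the point: it controls adjacencies \emph{leaving} $V_x$, whereas the round-trip adjacencies live entirely inside $V_x$ and are the ones that do not fit the product structure.

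The paper closes this gap with a construction you are missing. Writing $U_B$, $Y_B$ for your $V_x$, $A_x$, it builds from $G[U_B]$ an auxiliary graph $G_1$ by attaching, for each pair $y_1,y_2\in Y_B$ with $\dist_{G-E(G[U_B])}(y_1,y_2)\le r$, a fresh path $X_{y_1,y_2}$ of exactly that length, and for each $y\in Y_B$ a pendant path $W_y$ of length $r-1$, so that $G^r[U_B]=(G_1)^r[U_B]$. Then $Y_B$ together with all the new path vertices is placed in an \emph{enlarged} apex $K_\theta$, where $\theta=a+\binom{\beta}{2}(r+1)+\beta r$ and $\beta=2(k+1)+a$. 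Only after this do the separation conditions become verifiable — a short path in $G_1$ from $(C\setminus D)\times\{z\}$ to the forbidden region must pass through $\big((C\cap D)\times N^r_{P}[z]\big)\cup V(K_\theta)$, so the crude estimate $(r+1)^{(k+1)(2r+1)+\theta}$ suffices as $q$ — and the pendant paths $W_y$ are what certify the inductive invariant (any merge of two vertices with unequal distance-$r$ profiles on $Y_B$ in $G[U_B]$ would create a red edge into $K_\theta$, contradicting \cref{lem:productpathpowerrpower2}(1)). Your invariant is also stated with profiles in $G$ rather than in $G[U_B]$, which would need to be reconciled with the mechanism the lemma actually controls; this is a second reason the auxiliary-graph and apex-enlargement step is not optional but is the heart of the argument.
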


\begin{proof}
By \cref{MinorFree}, there exist $k,a\in\mathbb{N}$ such that $G$ has  a tree-decomposition $\cT=(T, \cB)$ in which every torso of a bag is a subgraph of $(H\boxtimes P)+K_a$ for some graph $H$ of treewidth at most $k$  and some path $P$. Consider $\mathcal{T}$ to be rooted at an arbitrary bag $R$ of $\cT$. Let $\beta:= 2(k+1)+a$. Note that the size of a maximum clique of $(H\boxtimes P)+K_a$ is at most $\beta$.
Let $h$ be the length of the longest path from an internal bag to $R$ in $\cT$. 
Define
\[\theta:=a+\tbinom{\beta}{2}(r+1)+\beta r,
\quad
q:=(r+1)^{(k+1)(2r+1)+\theta}
\quad \text{and} \quad
c:=g(q,r).\]

For each bag $B$ of $\cT$,  if $B=R$ then let $Y_B:=\emptyset$; otherwise, let $Y_B:=B\cap B'$ where $B'$ is the parent of $B$.
Also, let $U_B$ be the union of $B$ and all the descendants of $B$. Note that $0\le h-\dist_T(B,R)\le h$.

We prove by induction on $h-\dist_T(B, R)$ that 
\begin{itemize}
    \item[($\ast$)] there is a partial reduction sequence identifying $U_B\setminus Y_B$ into at most $q$ vertices in $G^r$ such that:
    \begin{itemize}
        \item we always identify two vertices $u$ and $v$ where the corresponding vertices in $G$ have the same distance-$r$ profiles on $Y_B$ in $G[U_B]$, and
        \item for every trigraph $G^*$ in the sequence,
    $f(\widetilde{G^*})\le c$.
    \end{itemize}
\end{itemize}

Let $B$ be a bag. Let $G_1$ be the graph obtained from $G[U_B]$ by adding, for each pair of vertices $y_1, y_2$ of $Y_B$ with $\dist_{G-E(G[U_B])}(y_1, y_2)\le r$, a new path $X_{y_1, y_2}$ of length $\dist_{G-E(G[U_B])}(y_1, y_2)$ whose end-vertices are $y_1$ and $y_2$, and then adding, for every vertex $y$ of $Y_B$, a path $W_y$ of length $r-1$ whose end-vertex is $y$.

We claim that for any two vertices $v,w\in U_B$,
\begin{itemize}
    \item $\dist_G(v,w)\le r$ if and only if $\dist_{G_1}(v,w)\le r$.
\end{itemize} 

Assume that there is a path $Z$ of length at most $r$ between $v$ and $w$ in $G$. We construct a path $Z^*$ of $G_1$ from $Z$ as follows.
\begin{itemize}
    \item We repeatedly choose a maximal subpath $Q$ of $Z$ whose end-vertices are in $U_B$ and all internal vertices are not in $U_B$. Assume its end-vertices are $y_1$ and $y_2$. Then we replace $Q$ with $X_{y_1,y_2}$.
\end{itemize}
By construction, $|E(X_{y_1, y_2})|=\dist_{G-E(G[U_B])}(y_1, y_2)\le |E(Q)|$. So, the resulting path $Z^*$ has length at most $r$, which shows that $\dist_{G_1}(v, w)\le r$. 

For the other direction, assume there is a path $Z$ of length at most $r$ between $v$ and $w$ in $G_1$. Similarly, we construct a walk of length at most $r$ in $G$ from $Z$ as follows.
\begin{itemize}
    \item We repeatedly choose a maximal subpath $Q$ of $Z$ of length at least $2$ whose end-vertices are in $U_B$ and all internal vertices are not in $U_B$.
    Note that $Q$ is $X_{y_1, y_2}$ for some $y_1, y_2\in Y$. By the construction, there is a path $Q_{y_1, y_2}$ in $G-E(G[U])$  of length $|E(X_{y_1,y_2})|$. Then we replace $Q$ with $Q_{y_1, y_2}$.
\end{itemize}
The resulting walk has length at most $r$ between $v$ and $w$ in $G$. Thus, $\dist_G(v,w)\le r$.

By the claim, $G^r[U_B]=(G_1)^r[U_B]$.

Now focus on $G_1$. First observe that $G_1[B]$ is a subgraph of the torso of $G[B]$ in $G$, and $G_1[B]$ is a subgraph of $(H\boxtimes P)+K_a$ for some graph $H$ of treewidth at most $k$ and some path $P$. Let $(T_B, \cB_B)$ be a rooted tree-decomposition of $H$ of width at most $k$. Let $P=(w_1,w_2,\dots,w_{\ell})$.

Observe that $Y_B\cup (\bigcup_{\{y_1, y_2\}\in \binom{Y_B}{2}}V(X_{y_1, y_2}))\cup (\bigcup_{y\in Y_B}V(W_y))$ contains at most  $\beta+\binom{\beta}{2}(r-1)+\beta(r-1)$ vertices. Since $G_1[B]$ is a subgraph of $(H\boxtimes P)+K_a$, the graph \[G_1[B]\cup \bigcup_{\{y_1, y_2\}\in \binom{Y_B}{2}} X_{y_1,y_2}\cup \bigcup_{y\in Y_B}W_y.\] 
can be regarded as a subgraph of $(H\boxtimes P)+K_{\theta}$, where 
$Y_B\cup (\bigcup_{\{y_1, y_2\}\in \binom{Y_B}{2}}V(X_{y_1, y_2}))\cup (\bigcup_{y\in Y_B}V(W_y))$ 
is placed in the $K_{\theta}$ part.  Let 
\[G_2:=G_1[B]\cup \bigcup_{\{y_1, y_2\}\in \binom{Y_B}{2} } X_{y_1,y_2}\cup \bigcup_{y\in Y_B}W_y.\]

Next, we extend the product structure for $G_2$ into one for $G_1$. We modify $H$ to $H^*$ as follows. Let $B'$ be a child of the bag $B$. Observe that $B\cap B'$ is a clique in the torso of $B$. Therefore, the vertices of $B\cap B'$ lie on $(N_B\times \{y_1,y_2\})\cup V(K_{\theta})$ for some consecutive two vertices $y_1$ and $y_2$ in $P$ and some bag $N_B$ of $(T_B, \cB_B)$.  
To $H$, we add a clique $Q_{B'}$ of size $|U_{B'}|$ that is complete to $N_B$. 
We add a leaf bag $Q_{B'}
\cup N_B$ adjacent to $N_B$, and embed the vertices of $U_{B'}\setminus B$ into $Q_{B'}\times \{y_1\}$. We do this process for every child of $B$. 

Let $H^*$ be the resulting graph obtained from $H$, and let $(T_B^*, \cB_B^*)$ be the resulting rooted tree-decomposition of $H^*$.
Since $(T_B, \cB_B)$ has width at most $k$, $(T_B^*, \cB_B^*)$ is $(k, \infty)$-rooted. Also 
$G_1$ is a subgraph of $(H^*\boxtimes P)+K_{\theta}$. 

We now apply \cref{lem:productpathpowerrpower2} to $(G_1)^r$ and $(T_B^*, \cB_B^*)$. To do so, we verify the four conditions. Since $(G_1)^r$ has no red edges, the red edge condition holds. 

(First separation condition) Let $(C,D)$ be a rooted separation of $H^*$ from $(T_B^*, \cB_B^*)$ and let $z\in V(P)$. Let \[M_1:=\Big(D\times V(P)\Big) \cup \Big(C\times (V(P)\setminus N^r_P[z])\Big)\cup V(K_\theta).\]
Observe that if there is a path of length at most $r$ in $G_1$ between $((C\setminus D)\times \{z\})\cap V(G_1)$ and $M_1$, 
this path intersects $((C\cap D)\times N^r_P[z])\cup V(K_\theta)$. Thus two vertices $v, w\in ((C\setminus D)\times \{z\})\cap V(G_1)$ having the same distance-$r$ profiles on 
$((C\cap D)\times N^r_P[z])\cup V(K_\theta)$ in $G_1$ have the same neighbourhood on $M_1$ in $(G_1)^r$. Therefore, 
\begin{align*}
    \left| \left\{N_{(G_1)^r}(v)\cap M:v\in ((C\setminus D)\times \{z\})\cap V(G_1)\right\}  \right|  
    \le\; & \pi^r_{G_1}\Big(((C\cap D)\times N^r_P[z])\cup V(K_\theta)\Big) \\
    \le\; & (r+1)^{(k+1)(2r+1)+\theta} \,=\, q.
\end{align*}

(Second separation condition) Let $t\in \{1, 2, \ldots, \ell\}$ and let \[M_2:=\big(V(H^*)\times \{w_{t+r+1},\dots,w_\ell\}\big)\cup V(K_\theta).\]
Observe that if there is a path of length at most $r$ in $G_1$ between $(V(H^*)\times \{w_1,\dots,w_t\})\cap V(G_1)$ and $M_2$, then this path intersects $V(K_{\theta})$. So, any two vertices in $(V(H^*)\times \{w_1,\dots,w_t\})\cap V(G_1)$ having the same distance-$r$ profiles on $V(K_\theta)$  have the same neighbourhood on $M_2$ in $(G_1)^r$.
Thus,
\begin{align*}
    |\{N_{(G_1)^r}(v)\cap M_2: v\in (V(H)\times \{w_1,\dots,w_t\})\cap V(G_1)\}| 
    \le\; \pi^r_{G_1}(V(K_\theta)) \,\le\, (r+1)^{\theta} \,\le\, q.
\end{align*} 

(Large leaf bag condition)
   Let $A$ be a leaf bag of $(T_B^*, \cB_B^*)$ with parent bag $A'$ with $|A\setminus A'|>q$.
   By the construction of $H^*$ and $(T_B^*, \cB_B^*)$, we know that $A\times V(P)\subseteq A\times \{y_1, y_2\}$ for some adjacent vertices $y_1$ and $y_2$ in $P$. 
   Also, $((A\setminus A')\times \{y_1, y_2\})\cap V(G_1)=U_{B'}\setminus B$ for some child $B'$ of $B$. 
    By induction, 
    there is a partial reduction sequence identifying $U_{B'}\setminus Y_{B'}$ into at most $q$ vertices in $G^r$ such that: 
    \begin{itemize}
        \item we always identify two vertices $u$ and $v$ where the corresponding vertices in $G$ have the same distance-$r$ profiles on $Y_{B'}$ in $G[U_{B'}]$, and
        \item for every trigraph $G^*$ in the sequence,
    $f(\widetilde{G^*})\le c$.
    \end{itemize}
    Since $G^r[U_B]=(G_1)^r[U_B]$, 
    this sequence is also a partial reduction sequence for $(G_1)^r[U_{B'}\setminus Y_{B'}]$.
    Note that if two vertices have the same distance-$r$ profiles on $Y_{B'}$ in $G[U_{B'}]$, then they have the same distance-$r$ profiles on $Y_B$ in $G[U_B]$.
    Thus, by the first bullet, 
    this sequence does not create any red edge incident with  $V(K_\theta)$.

   We deduce that $(G_1)^r$ and $(T_B^*, \cB_B^*)$ satisfy the large leaf bag condition of \cref{lem:productpathpowerrpower2}.

    Therefore, by \cref{lem:productpathpowerrpower2}, 
    there is a partial reduction sequence identifying $(V(H^*)\times V(P))\cap V(G_1)$ into at most $q$ vertices in $(G_1)^r$ such that for every trigraph $G^*$ in the sequence, $G^*$ has no red edge incident with $V(K_\theta)$, and $f(\widetilde{G^*})\le \max (g(q,r), c)$. 
    
    Now apply the same reduction sequence to $G^r$. We claim that we always identify two vertices having the same distance-$r$ profiles on $Y_B$ in $G[U_B]$. Suppose for contradiction that we identified two vertices $u$ and $v$ having distinct distance-$r$ profiles on $Y_B$ in $G[U_B]$.
    We may assume that for some $x\in Y_B$ and $\ell_1 \in \{1, 2, \ldots, r\}$ and $\ell_2\in \{\ell_1+1, \ldots r\}\cup \{\infty\}$, we have $\dist_{G[U_B]}(u,x)=\ell_1$ and $\dist_{G[U_B]}(v,x)=\ell_2$. But then on the path $W_x$, there is a vertex adjacent to $u$ but not to $y$ in $(G_1)^r$. So, when we identify these vertices in $(G_1)^r$, we create a red edge incident with $V(K_{\theta})$, a contradiction. Therefore, the claim holds. 
    
    We conclude that the statement $(\ast)$ holds by induction.  
    When $B=R$, we know that $U_B=V(G)$ and $Y_B=\emptyset$. Therefore, 
    there is a partial reduction sequence identifying $V(G)$ into at most $q$ vertices in $G^r$ such that
    for every trigraph $G^*$ in the sequence,
    $f(\widetilde{G^*})\le c$. We obtain the desired reduction sequence by arbitrarily identifying the resulting trigraph on at most $q$ vertices into a 1-vertex graph. For every trigraph $G^{**}$ in this sequence, $f(\widetilde{G^{**}})\le g(q,r)=c$, as every graph on at most $q$ vertices is a subgraph of $S_{1,q,r}$.
    This completes the proof of the theorem.
\end{proof}

\cref{thm:Hminorfreepower} with $f=\bw$ and $g(q,r)=(2r+2)q-2$ implies the following result, which is the main contribution of the paper. 
 
\begin{thm}
\label{MinorClosedPower}
For all $t,r\in\mathbb{N}$ there exists $c\in\NN$ such that for every $K_t$-minor-free graph $G$, \[ \reduced{\bw}(G^r)\leq c.\]
\end{thm}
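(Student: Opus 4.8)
The plan is to derive \cref{MinorClosedPower} directly from \cref{thm:Hminorfreepower} by verifying that the bandwidth parameter $\bw$ fits the hypotheses of that theorem, so that no further structural work is needed. Concretely, \cref{thm:Hminorfreepower} asserts: if $f$ is a monotone and union-closed graph parameter and $g:\NN\times\NN\to\RR$ satisfies $f(S_{x,q,r})\le g(q,r)$ for all $x\in\NN$, then for every $t,r\in\NN$ there is a constant $c$ with $\reduced{f}(G^r)\le c$ for every $K_t$-minor-free graph $G$. So the task reduces to checking three things about $f=\bw$: (i) $\bw$ is monotone (i.e.\ closed under taking subgraphs), (ii) $\bw$ is union-closed, and (iii) there is a function $g$ with $\bw(S_{x,q,r})\le g(q,r)$ independent of $x$.

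For (i), monotonicity of $\bw$ is immediate: any vertex-ordering witnessing bandwidth $k$ for a graph $G$ also witnesses bandwidth at most $k$ for every subgraph of $G$, since deleting edges cannot increase the maximum of $|i-j|$ over the remaining edges. For (ii), if $G$ and $H$ are disjoint then concatenating an optimal ordering of $G$ with an optimal ordering of $H$ yields an ordering of $G\cup H$ in which every edge lies entirely within one of the two blocks, so $\bw(G\cup H)\le\max(\bw(G),\bw(H))$; this is exactly union-closedness. For (iii), we already have in the excerpt the bound $\bw(S_{x,q,r})\le(2r+2)q-2$ from \cref{bwS} (established via the explicit vertex-ordering $A_x,\dots,A_1,Q,B_1,C_1,\dots,B_x,C_x$, whose bandwidth is bounded by the combined size of one $Q$-block and a window of $B$- and $C$-blocks, independently of $x$). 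Hence we may take $g(q,r):=(2r+2)q-2$.

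With these three facts in hand, I would simply invoke \cref{thm:Hminorfreepower} with $f=\bw$ and $g(q,r)=(2r+2)q-2$: it produces, for each $t,r\in\NN$, a constant $c\in\NN$ such that $\reduced{\bw}(G^r)\le c$ for every $K_t$-minor-free graph $G$. Since a class is proper minor-closed precisely when it excludes some $K_t$ as a minor (for a suitable $t$), and any proper minor-closed class is contained in the class of $K_t$-minor-free graphs for that $t$, the statement of \cref{MinorClosedPower} follows verbatim.

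There is essentially no obstacle here — all the difficulty was already absorbed into \cref{ProductPath}, \cref{lem:productpathpowerrpower}, \cref{lem:productpathpowerrpower2}, and \cref{thm:Hminorfreepower}, together with the Graph Minor Product Structure Theorem (\cref{MinorFree}). The only point that deserves a line of care is that the constant $c$ depends on $t$ and $r$ (through the constants $k,a$ supplied by \cref{MinorFree} and the resulting value $q=(r+1)^{(k+1)(2r+1)+\theta}$ in the proof of \cref{thm:Hminorfreepower}), which is exactly what the statement of \cref{MinorClosedPower} claims, so nothing more is required. If one wanted an explicit bound rather than a mere existence statement one would have to track the constants from \cref{MinorFree} through $\theta$, $q$, and $g$, but the theorem as stated asks only for existence of $c$.
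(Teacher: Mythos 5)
Your proof is correct and takes exactly the same route as the paper, which derives \cref{MinorClosedPower} by the one-line invocation of \cref{thm:Hminorfreepower} with $f=\bw$ and $g(q,r)=(2r+2)q-2$. The hypothesis checks you spell out (monotonicity and union-closedness of $\bw$, and the bound $\bw(S_{x,q,r})\leq(2r+2)q-2$ from \cref{bwS}) are exactly what make that invocation valid.
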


\section{Expanders}
\label{Expanders}

This section shows that bounded degree expanders have unbounded reduced bandwidth. In fact, we show a stronger result for expanders excluding a fixed complete bipartite subgraph. For $s\in\mathbb{N}$ and $\beta\in\mathbb{R}^+$, a graph $G$ is an \defn{$(s,\beta)$-expander} if $G$ contains no $K_{s,s}$ subgraph, and for every $S \subseteq V(G)$ with $|S| \leq \frac{|V(G)|}{2}$ we have $\abs{N_G(S)} \ge \beta \abs{S}$. 
Expanders can be constructed probabilistically or constructively; see the survey \citep{HLW06} for example. In the following result it is necessary to exclude some fixed complete bipartite subgraph, as otherwise complete bipartite graphs would be counterexamples. 

\begin{thm}
\label{ExpanderClass}
Let $\GG$ be an infinite class of $(s,\beta)$-expanders for some $s\in\mathbb{N}$ and $\beta\in\mathbb{R}^+$. 
Then $\GG$ has unbounded $\reduced{\bw}$.
\end{thm}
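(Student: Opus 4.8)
The plan is to argue by contradiction. Suppose some $k\in\NN_0$ satisfies $\reduced{\bw}(G)\le k$ for every $G\in\GG$; since $\GG$ is infinite it contains graphs on arbitrarily many vertices, so it suffices to show that every $(s,\beta)$-expander $G$ with $\reduced{\bw}(G)\le k$ has $|V(G)|\le f(k,s,\beta)$ for some function $f$. Fix such a $G$ on $n$ vertices with $n$ large, and a reduction sequence $G_n,G_{n-1},\dots,G_1$ in which every red graph has bandwidth at most $k$; since $\Delta(H)\le 2\bw(H)$, every red graph also has maximum degree at most $2k$. Call a \emph{part at step $i$} one of the $i$ subsets of $V(G)$ identified into a vertex of $G_i$. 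A routine induction on the reduction gives: distinct parts $P,Q$ are non-adjacent in $G_i$ iff there is no edge of $G$ between them, and $pq$ is a black edge of $G_i$ iff $P$ is complete to $Q$ in $G$. Consequently, a part $P$ with $|P|\ge s$ has at most $2k$ red-neighbours in $G_i$ (bounded red-degree), and at most $s-1$ black-neighbours, each of size at most $s-1$: otherwise, picking one vertex from each of $s$ black-neighbours together with $s$ vertices of $P$, or using a black-neighbour of size $\ge s$, produces $K_{s,s}\subseteq G$. Hence the $G$-neighbourhood of any set $W$ that is a union of $\ell$ parts each of size at least $s$ is covered by the $\le 2k\ell$ red-neighbour parts of $W$ together with at most $\ell(s-1)^2$ extra vertices, and, crucially, \emph{every black edge leaving $W$ enters a part of size $<s$}.

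I would then locate the right step of the sequence. Let $m$ be least such that $G_m$ has a part of size $\ge n/4$; this part $P^*$ arises from a single identification of two parts of size $<n/4$, so $n/4\le|P^*|<n/2$ and every other part of $G_m$ has size $<n/4$. Thus $P^*$ and $V(G)\setminus P^*$ both have at least $n/4$ vertices. The objective is to extract from $G_m$ a set $A\subseteq V(G)$ with $n/4\le|A|\le (1-\delta)n$ (for a fixed small $\delta>0$) and $|N_G(A)|=o(n)$; this is impossible in an $(s,\beta)$-expander, because $N_G(A)$ separates $A$ from $V(G)\setminus(A\cup N_G(A))$, both of which have at least $\min(\delta,1/4)n-o(n)$ vertices, so applying the expansion inequality to whichever of these two sets has at most $n/2$ vertices gives $|N_G(A)|\ge\beta\min(\delta,1/4)n-o(n)$, a contradiction once $n\ge n_0(k,s,\beta)$.

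To construct $A$ I would use the one advantage bounded bandwidth has over bounded twin-width: a graph of bandwidth at most $k$ is a subgraph of the $k$th power of a path, so the ball of radius $r$ around any vertex contains at most $2kr+1$ vertices (linear, not exponential, growth). In the red graph $\widetilde{G}_m$, let $\mathcal{S}_r$ be the set of parts at red-distance at most $r$ from $P^*$, so $|\mathcal{S}_r|\le 2kr+1$; let $S_r:=\bigcup\mathcal{S}_r$ and let $S'_r$ be the union of the parts in $\mathcal{S}_r$ of size at least $s$. Then $P^*\subseteq S'_r$, so $|S'_r|\ge n/4$. Since $S'_r$ is a union of \emph{large} parts, every black edge leaving it enters a part of size $<s$, and the number of such parts is at most $(s-1)|\mathcal{S}_r|$; every red edge leaving $S'_r$ either enters a part of $\mathcal{S}_r$ of size $<s$ or a part at red-distance exactly $r+1$. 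Writing $L_j:=|S_j\setminus S_{j-1}|$, this yields $|N_G(S'_r)|\le L_{r+1}+O\!\big(s^2(2kr+1)\big)$. As $\sum_j L_j\le n$, a pigeonhole choice of a \emph{bounded} radius $r$ for which $L_{r+1}$ is sublinear and $S_r$ is not too large produces $A:=S'_r$ with $n/4\le|A|\le(1-\delta)n$ and $|N_G(A)|=o(n)$, completing the contradiction and bounding $n$.

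The main obstacle is precisely this last balancing step: it is possible that the first few layers $L_1,L_2,\dots$ of the red ball around $P^*$ are each a constant fraction of $n$, so $S_r$ may overshoot $(1-\delta)n$ before any ``small'' layer appears, while shrinking the radius leaves the boundary large. Resolving this requires careful bookkeeping — for instance, absorbing into $A$ only the \emph{large} parts of such heavy layers (there are at most $1/\gamma$ parts of size $\ge\gamma n$ in total, which limits how often this happens) while keeping $\mathcal{S}_r$, and hence the number of parts comprising $A$, bounded; alternatively, re-centring the ball. Making this argument go through — exhibiting $A$ as a union of boundedly many large parts, of total size in $[n/4,(1-\delta)n]$, with $o(n)$ boundary — is the crux.
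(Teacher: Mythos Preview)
Your setup is sound: the observations about black edges between big parts, the bound on black neighbours of a big part, and the linear growth of balls in a bandwidth-$k$ graph are all correct and are exactly the ingredients the paper uses. The gap you flag at the end is, however, genuine and is not merely ``careful bookkeeping''. With your choice of step $m$ --- the first time a part reaches size $n/4$ --- the \emph{other} parts at step $m$ may still have size $\Theta(n)$ (anything below $n/4$), and there can be boundedly many of them. In that regime there need not exist any union $A$ of boundedly many big parts with $|A|\in[n/4,(1-\delta)n]$ and $|N_G(A)|=o(n)$: the $\le 2k$ ``boundary'' parts in the bandwidth ordering can each carry $\Theta(n)$ vertices, so the red boundary of any candidate $A$ is $\Theta(n)$, and none of your proposed fixes (taking only parts of size $\ge\gamma n$, or re-centring) escapes this, since a configuration with $O(1)$ parts all of size $\Theta(n)$ admits no sublinear cut at all.

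The paper sidesteps this by choosing the threshold differently: take $m$ to be the first step at which some part $X_0$ reaches size $\sqrt{n}$, so every part at step $m$ has size at most $2\sqrt{n}$. Now the $\le 2k$ boundary parts contribute only $O(\sqrt{n})$ to $|N_G(Z)|$. To beat this with the expansion lower bound, the paper does not simply take $Z=X_0$; instead it greedily grows a connected red set $X_0,X_1,\dots,X_t$ with $t\approx n^{1/4}$, showing inductively that $|X_i|\ge \beta|X_0|/(4ki)$, so $|Z|\ge \sum_i |X_i|=\Omega(\sqrt{n}\log n)$ by the harmonic series. Expansion then gives $|N_G(Z)|=\Omega(\sqrt{n}\log n)$, while the upper bound (small parts in the interval, the $2k$ boundary parts, and black neighbours of big parts) is $O(\sqrt{n})$. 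The $\log n$ factor from the harmonic sum is exactly what your approach is missing; it is bought by working at a scale where \emph{all} parts are $o(n)$, so the boundary is automatically sublinear, and one then needs the harmonic growth to make $|Z|$ large enough relative to that boundary.
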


\begin{proof}
Assume for contradiction that there exists $k\in\mathbb{N}$ such that $\reduced{\bw}(G) \le k$ for every $G \in \GG$. Consider an $n$-vertex graph $G\in\GG$ where $n\gg k,s,\beta^{-1}$ (as detailed below). 

Let $G=G_n, G_{n-1}, \dots, G_1$ be a reduction sequence for $G$ such that every red graph has bandwidth at most $k$. It is convenient to consider the corresponding partition sequence 
  \[\{\{v\} : v \in V(G)\}=\cP_n, \cP_{n-1}, \dots, \cP_1=\{V(G)\},\]
  and the associated trigraphs $G_{\cP_n}, G_{\cP_{n-1}}, \dots, G_{\cP_1}$ isomorphic to $G_n, G_{n-1}, \dots, G_1$ respectively. If $G_{i-1}=G_i/u,v$ and $u\in X$ and $v\in Y$, then $X$ and $Y$ are distinct parts of $\cP_i$, and $\cP_{i-1}$ is obtained from $\cP_i$ by replacing $X$ and $Y$ by $X\cup Y$. We consider the parts of $\cP_i$ to be the vertices of $G_{\cP_i}$. Say a part $X\in \cP_i$ is \defn{big} if $|X|\geq s$ and \defn{small} otherwise. Observe that for each black edge $XY$ in $G_{\cP_i}$, there is a complete bipartite subgraph in $G$ between $X$ and $Y$, which implies that $X$ or $Y$ is small. Moreover, if $X$ is big and $Y_1,\dots,Y_r$ are the neighbours of $X$ for which $XY_i$ is black, then there is a complete bipartite subgraph in $G$ between $X$ and $Y_1\cup\dots\cup Y_r$, implying $|Y_1\cup\dots\cup Y_r|\leq s-1$. 

Consider the first trigraph $G_{\cP_m}$ of the sequence (that is, the largest $m$) such that $\cP_m$ contains a part $X_0$ of size at least $\sqrt n$. Thus $X_0$ is big (for large $n$). Let $x:=\abs{X_0}$ and $t := \lceil \sqrt x \rceil$. By the choice of $m$, we have $\sqrt n \le x \le 2 \sqrt n$, and every part in $\cP_m$, except $X_0$, has size less than~$\sqrt{n}$. 
  
  We now show there are distinct parts $X_0,X_1,\ldots,X_t \in \cP_m$ such that $\abs{X_i} \ge \frac{\beta x}{4ki}$ for each $i\in\{1,\dots,t\}$, and $\{X_0,X_1,\dots,X_t\}$ induces a connected red subgraph in $G_{\cP_m}$. Assume that $X_0,X_1,\dots,X_i$ satisfy these properties for some $i\in\{0,\dots,t-1\}$. We now show how to find $X_{i+1}$. For $j\in\{1,\dots,i\}$,
\[|X_j|
\geq\frac{\beta x}{4kj}
\geq\frac{\beta x}{4k(t-1)} 
\geq\frac{\beta x}{4k\sqrt{x}} 
\geq\frac{\beta \sqrt{x}}{4k} 
\geq\frac{\beta n^{1/4}}{4k} 
\geq s \text{ (for large $n$)}.\] 
So $X_0,X_1,\dots,X_i$ are all big. 
Let $Y_1,\dots,Y_p$ be the parts of $\cP_m \setminus\{ X_0,\dots,X_i\}$ joined to $\{X_0, \ldots, X_i\}$ by black edges. 
As argued above, $|Y_1\cup\dots\cup Y_p| \leq (i+1)(s-1)$. 
Let $Z_1,\dots,Z_q$ be the parts of $\cP_m \setminus\{ X_0,\dots,X_i\}$ joined to $\{X_0, \ldots, X_i\}$ by red edges. 
Each of $X_0,\dots,X_i$ are incident to at most $2k$ red edges in $G_{\cP_m}$ since graphs of bandwidth $k$ have maximum degree at most~$2k$. So $q\leq 2k(i+1)$. 
On the other hand, as $G$ is an $(s, \beta)$-expander and
\[\abs{X_0\cup\dots\cup X_i} \leq (i+1)x \leq tx \le \frac{n}{2},\] we have
\[|Y_1\cup\dots\cup Y_p|+|Z_1\cup\dots\cup Z_q| \geq |N_G(X_0\cup\dots\cup X_i)|\geq \beta x\] and
\[|Z_1\cup\dots\cup Z_q|\geq \beta x-(i+1)(s-1) \geq\frac{\beta x}{2}\] (for large $n$).  
Let $X_{i+1}$ be the largest of $Z_1,\dots,Z_q$. 
So $|X_{i+1}|\geq \frac{\beta x}{2q} \geq \frac{\beta x}{4k(i+1)}$, 
and $\{X_0,X_1,\dots,X_{i+1}\}$ induces a connected red graph in $G_{\cP_m}$, as desired. Hence there are distinct parts $X_0,X_1,\ldots,X_t \in \cP_m$ such that $\abs{X_i} \ge \frac{\beta x}{4ki}$ for each $i\in\{1,\dots,t\}$, and $\{X_0,X_1,\dots,X_t\}$ induces a connected red subgraph in $G_{\cP_m}$. As shown above, $X_0,\dots,X_t$  are all big. 

Let $H$ be the connected component of $G_{\cP_m}$ containing $X_0,\dots,X_t$. Consider a vertex-ordering of~$H$ with bandwidth at most~$k$. Since $\{X_0, X_1, \ldots, X_t\}$ induces a connected red subgraph of~$H$, there is a set $I$ of $kt+1$ consecutive vertices in this ordering of $H$ containing $X_0,\dots,X_t$ and with at most $k-1$ parts strictly between `consecutive' $X_i$s. 

Let $Z$ be the union of all the big parts in~$I$. 
So $\abs{Z} \le (kt+1)x \le \frac{n}{2}$ (for large $n$), implying 
\[\abs{N_G(Z)}\geq \beta \abs{Z}
\ge \beta \abs{X_1\cup\dots\cup X_t} 
\ge \frac{\beta^2 x}{4k} \sum_{i=1}^t\frac{1}{i} 
\ge \frac{\beta^2 x \ln t}{4k} 
.\]
We now upper bound $|N_G(Z)|$. Each vertex in $N_G(Z)$ is either in: 
\begin{itemize}
    \item a small part of $\cP_m$ in $I$, 
    \item a part of $\cP_m$ adjacent to $I$ in $H$, or 
    \item a part of $\cP_m$ outside $H$.
\end{itemize}
There are at most $(kt+1)(s-1)$ vertices in small parts of $\cP_m$ in $I$. There are at most $2k$ parts of $\cP_m$ adjacent to $I$ in $H$ ($k$ to the left of $I$, and $k$ to the right), and each such part has at most $x$ vertices, thus contributing at most $2kx$ vertices to $N_G(Z)$. Each big part in~$I$ is incident to at most $s-1$ vertices in parts outside $H$ (since the corresponding edges are black), so there are at most  
$(kt+1)(s-1)$ vertices in $N_G(Z)$ of the third type. 
Hence
\[ \frac{\beta^2 x\ln t}{8k} 
\leq |N_G(Z)| 
\leq (kt+1)(s-1)  + 2kx + (kt+1)(s-1). \]
This is the desired contradiction, since  the left-hand side is $\Theta(\sqrt{n} \log n)$ and right-hand side is $\Theta(\sqrt{n})$ for fixed $k,s,\beta$.
\end{proof}

\section{Tied and Separated Parameters}
\label{Tied}

Let $\preceq$ be the preorder, where for graph parameters $f_1$ and $f_2$, define \defn{$f_1\preceq f_2$} if there exists a~function~$g$ such that $f_1(G)\leq g( f_2(G))$ for every graph $G$. Graph parameters $f_1$ and $f_2$ are \defn{tied} (also called \defn{functionally equivalent}) if $f_1\preceq f_2$ and $f_2\preceq f_1$. For example, \citet{TW-VI} showed that total twin-width is tied to linear rank-width, while component twin-width,  clique-width and boolean-width are tied. 

Define \defn{$f_1 \precneq f_2$} to mean $f_1\preceq f_2$ and  $f_2\not\preceq f_1$. For example, $\tww \precneq \reduced{\bw}$ since for every graph~$G$, we have $\Delta(G)\leq 2 \bw(G)$ implying $\tww(G)\leq 2 \,\reduced{\bw}(G)$, but there are bounded degree expanders with twinwidth 6 \citep{TW-II} and unbounded reduced bandwidth by \cref{ExpanderClass}. 
Graph parameters $f_1$ and $f_2$ are \defn{separated} if 
$f_1\precneq f_2$ or $f_2\precneq f_1$.

Recall that in the context of reduction sequences, it only makes sense to consider graph parameters that are unbounded on stars. This motivates the following definition. For a graph parameter $f$ let \defn{$f+\Delta$} be the graph parameter defined by $(f+\Delta)(G):=f(G)+\Delta(G)$. This section shows that $\reduced{\Delta}$, $\reduced{(\Delta+\tw)}$, $\reduced{(\Delta+\pw)}$ and $\reduced{\bw}$ are separated (even within graphs).

\begin{thm}\label{par-separation}
$\reduced{\Delta} \precneq \reduced{(\Delta+\tw)} \precneq \reduced{(\Delta+\pw)} \precneq \reduced{\bw}$.
\end{thm}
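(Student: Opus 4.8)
The easy part is the chain of non-strict relations $\reduced{\Delta}\preceq \reduced{(\Delta+\tw)}\preceq\reduced{(\Delta+\pw)}\preceq\reduced{\bw}$, and I would deduce all three from a single observation: if $f_1,f_2$ are graph parameters and $\phi\colon\NN_0\to\NN_0$ is non-decreasing with $f_1(J)\le\phi(f_2(J))$ for every graph $J$, then $\reduced{f_1}\preceq\reduced{f_2}$, since any reduction sequence of a graph $G$ in which every red graph $\widetilde G_i$ satisfies $f_2(\widetilde G_i)\le k$ also satisfies $f_1(\widetilde G_i)\le\phi(k)$, whence $\reduced{f_1}(G)\le\phi(\reduced{f_2}(G))$. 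This applies with $\phi=\operatorname{id}$ to the pairs $(\Delta,\Delta+\tw)$ and $(\Delta+\tw,\Delta+\pw)$ (using $\tw(J)\le\pw(J)$), and with $\phi(k)=3k$ to $(\Delta+\pw,\bw)$ (using $\Delta(J)\le 2\bw(J)$ and $\pw(J)\le\bw(J)$). It remains to prove the three strictness statements $\reduced{(\Delta+\tw)}\not\preceq\reduced{\Delta}$, $\reduced{(\Delta+\pw)}\not\preceq\reduced{(\Delta+\tw)}$ and $\reduced{\bw}\not\preceq\reduced{(\Delta+\pw)}$.

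For each of these I would exhibit an infinite family $(G_m)_{m\in\NN}$ on which the smaller reduced parameter is bounded while the larger is unbounded. The upper bounds would come from explicit, tailor-made reduction sequences, in the spirit of the constructive arguments of \cref{BoundedRowTreewidth}: one designs a reduction of $G_m$ and checks directly that every red graph along it is a subgraph of an ``easy'' graph --- a bounded-degree forest in the case of $\Delta+\tw$, a graph of bounded degree and bounded pathwidth in the case of $\Delta+\pw$. The lower bounds would follow the template of the proof of \cref{ExpanderClass}: assuming a reduction sequence of $G_m$ in which every red graph has the relevant width at most $k$, pass to the partition sequence, let $\cP_m$ be the first partition in which some part becomes large, and derive a contradiction from the structure of the red graph $\widetilde G_{\cP_m}$ (a graph of bounded width) together with the combinatorics of $G_m$, via a vertex-counting / separator estimate. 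For the first separation the natural family is an infinite class of bounded-degree $(s,\beta)$-expanders, which satisfy $\reduced{\Delta}(G_m)=\tww(G_m)\le 6$ by \citet{TW-II}; to show $\reduced{(\Delta+\tw)}$ is unbounded one would argue that a width-$k$ tree-decomposition of the critical red graph, with its part-vertices weighted by their sizes, produces a balanced separator of $G_m$ of sublinear order, contradicting expansion. For the other two separations the family must have unbounded treewidth --- otherwise bounded treewidth would give $\reduced{\star}(G_m)\le 2^{\tw(G_m)+2}$, bounding the orders of the red components and hence $\reduced{(\Delta+\tw)}$, $\reduced{(\Delta+\pw)}$ and $\reduced{\bw}$ simultaneously --- yet it must still admit reductions whose red graphs have bounded degree and bounded treewidth (respectively pathwidth). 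I would build such $G_m$ by attaching small expander-like gadgets along a global tree-like (respectively path-like) skeleton, so that the skeleton guides a good reduction keeping every red graph a bounded-degree forest (respectively a graph of bounded degree and pathwidth), while the gadgets force every red graph at the critical step to contain a subgraph of unbounded pathwidth (respectively unbounded bandwidth).

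I expect the main obstacle to be the design and analysis of the families for the two intermediate separations, and within that the lower bounds. Unlike in \cref{ExpanderClass}, the obstruction that must survive every reduction --- a forest of large pathwidth, or a graph of large bandwidth --- is not forced by expansion alone, so the construction has to be rigid enough that every coarsening still exhibits it while remaining ``reducible'' in the weaker width parameter; arranging this simultaneously with the requirement that the good reduction keeps red degrees bounded is the delicate point, and it is what will dictate the precise form of the gadgets and skeleton. Even the first separation is not completely routine: the proof of \cref{ExpanderClass} uses the bandwidth ordering to localise the argument to $O(kt)$ consecutive parts, and replacing this by a tree- or path-decomposition requires care to keep the number of relevant parts --- and hence the resulting upper bound on $|N_{G_m}(\cdot)|$ --- small enough to contradict $(s,\beta)$-expansion.
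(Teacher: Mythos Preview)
Your non-strict chain is fine. For the strict separations you propose ad hoc constructions and partition-sequence lower bounds in the style of \cref{ExpanderClass}; the paper takes a completely different and much shorter route that you are missing.

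The key tool is a construction of \citet{BBD21} (\cref{lem:trigraph-to-graph}): for any connected graph $H$ there is a graph $t(H)$ which (i) admits a partial reduction to $\red(H)$ whose intermediate red graphs are disjoint unions of subgraphs of $H\blowup K_2$, and (ii) has the property that \emph{every} reduction sequence of $t(H)$ either passes through a supertrigraph of $\red(H)$ or produces red maximum degree at least $|V(H)|$. From this the paper extracts a transfer principle (\cref{lem:par-to-reduced-par}): if $f,g$ are monotone, $f$ is union-closed and $2$-blowup-closed, $g(G)\ge q'(\Delta(G))$ for some unbounded non-decreasing $q'$, and there are connected $H_n$ with $|V(H_n)|\ge n$, $g(H_n)\to\infty$ and $\reduced{f}(\red(H_n))\le c$, then $\reduced{f}\precneq\reduced{g}$. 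Point (i) together with $\reduced{f}(\red(H_n))\le c$ gives $\reduced{f}(t(H_n))\le O(1)$; point (ii) forces every reduction of $t(H_n)$ to witness $g\ge\min(q'(|V(H_n)|),g(H_n))\to\infty$, with no partition-sequence analysis whatsoever.

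The three separations then become one-liners by taking $H_n$ to be the $n\times n$ planar grid, the complete binary tree of height $n$, and the tree $Q_n$ of footnote~\ref{QnHere}, respectively: in each case $g(H_n)\to\infty$ is classical, and $\reduced{f}(\red(H_n))\le O(1)$ is witnessed by an obvious leaf-to-root (or row-by-row) reduction of $H_n$ itself. Your expander plan for the first separation may well be salvageable, but for the other two you yourself flag the design of the gadgeted families as the main obstacle and offer nothing concrete; the paper dissolves precisely this difficulty by pushing the requirement ``the obstruction survives every reduction'' into the general-purpose \cref{lem:trigraph-to-graph}, so that the only remaining work is to reduce $H_n$, not $t(H_n)$.
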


The proof of \cref{par-separation} uses a lemma of \citet{BBD21,BBD22}. 

For every graph $G$, let \defn{$\red(G)$} be the trigraph $G'=(V(G')=V(G),E(G')=\emptyset,R(G')=E(G))$ obtained by making all its edges red. An \defn{induced subtrigraph} of~$G$ is another trigraph~$H$ obtained by removing some vertices from $G$, and $G$ is then called a \defn{supertrigraph} of~$H$. Note that $\reduced{f}(G)$ is well-defined for any graph parameter $f$ and any trigraph $G$ (since reduction sequences are  defined for trigraphs $G$).

The \defn{2-blowup} of a graph $G$, denoted by \defn{$G \blowup K_2$}, is the graph obtained by replacing each vertex $u \in V(G)$ by two non-adjacent vertices $u_1, u_2$, and replacing each edge $uv \in E(G)$ by four edges $u_1v_1, u_1v_2, u_2v_1, u_2v_2$.

\begin{lem}[essentially Lemma~10 in \citep{BBD21}]
\label{lem:trigraph-to-graph}
For every connected graph~$H$ of maximum degree~$d$, there is a graph $G$ admitting a partial reduction sequence $G_n, \ldots, G_i$ such that:
\begin{itemize}
    \item  $\Delta(\widetilde{G_k}) \leq 2d$ for every $k \in \{i,i+1,\ldots,n\}$, 
    \item each connected component of $\widetilde{G_k}$ is a subgraph of $G \blowup K_2$,
    \item $G_i$ is isomorphic to $\red(H)$, and
    \item every (full) reduction sequence of $G$ goes through a supertrigraph of $\red(H)$ or a~trigraph with red maximum degree at least $|V(H)|$.
\end{itemize}
\end{lem}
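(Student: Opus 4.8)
The plan is to construct $G$ by ``localising'' each vertex of $H$: replace every vertex $u$ of $H$ by a long path $P_u$ of black edges (a ``gadget''), choose a distinguished end-vertex of $P_u$ to represent $u$, and attach these gadgets to each other according to the edges of $H$ in a way that forces any reduction sequence to essentially contract each $P_u$ to its representative before doing anything global. Concretely, I would take $G := H'\blowup K_2$ for a suitable subdivision-like supergraph $H'$ of $H$; the $2$-blowup is already baked into the statement (each red component must be a subgraph of $G\blowup K_2$, so working with a blown-up host is natural), and the key is to choose $H'$ so that (i) contracting each gadget down to a single pair of vertices produces exactly $\red(H)$ (up to the blowup), and (ii) any attempt to contract two vertices from different gadgets prematurely creates a red vertex of degree at least $|V(H)|$, because such a contraction simultaneously ``sees'' the red edges leaking from all the (still-uncontracted) neighbouring gadgets. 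Since this is Lemma~10 of \citep{BBD21} in essentially the same form, I would follow that construction: their gadget is a path (or a path-like tree) whose length exceeds $2d\cdot|V(H)|$, with the blowup supplying the factor-$2$ slack in the degree bounds.

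The key steps, in order, are: \textbf{(1)} Define the host graph $G$ precisely as the $2$-blowup of an appropriate graph $H'$ built from $H$ by subdividing each edge many times and/or padding each vertex with a pendant path; record $\Delta(G)$ and verify that after blowup $\Delta(G\blowup K_2) = \Delta(G)$ is still $O(d)$. \textbf{(2)} Describe the canonical partial reduction sequence $G_n,\dots,G_i$: process the gadgets one at a time, and within a gadget contract its path from one end toward the representative vertex. At every intermediate step the only red edges present lie inside the gadget currently being processed (plus at most $O(d)$ red edges at its interface), so each red component is a short path or small tree, hence a subgraph of $G\blowup K_2$, and its maximum degree is at most $2d$ (the blowup doubling the degree-$d$ bound coming from $H$). \textbf{(3)} Check that when every gadget has been reduced to its representative pair, the resulting trigraph is exactly $\red(H)\blowup K_2$ restricted appropriately---i.e. isomorphic to $\red(H)$ in the sense demanded (here one uses that contracting a black path whose endpoints have the right neighbourhoods produces red edges precisely along the edges of $H$). \textbf{(4)} Prove the ``every reduction sequence is forced'' clause: take an arbitrary reduction sequence of $G$ and look at the first time two vertices belonging to different gadgets $P_u, P_v$ are identified; argue that at that moment the merged vertex is incident, in red, to a leaked edge from each gadget adjacent to $u$ or $v$ in $H$ that has not yet been internally collapsed, and by a counting/extremal argument (choosing the gadget lengths large enough relative to $|V(H)|$) one of the intermediate trigraphs must have red maximum degree $\ge |V(H)|$ unless the sequence has already passed through a supertrigraph of $\red(H)$ (namely the stage where all gadgets are collapsed but nothing across gadgets has merged).

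The main obstacle I expect is step \textbf{(4)}: making rigorous the claim that an ``out-of-order'' contraction is globally expensive. One has to track, for a partially executed reduction sequence, which gadgets are ``internally finished'', which are ``untouched'', and which are ``in progress'', and show that the red degree blows up the moment a cross-gadget merge happens before enough internal collapsing has occurred---while simultaneously allowing the honest sequence of step~(2) to sneak through with red degree only $2d$. The right way to handle this, following \citep{BBD21}, is to fix the gadget length to be a function of $|V(H)|$ (something like $3d|V(H)|$), so that a single cross-gadget merge forces red edges to a linear-in-$|V(H)|$ number of still-separated path-segments; a short pigeonhole argument on the at-most-$d$ neighbours of $u$ and $v$ in $H$ then yields a red vertex of degree $\ge |V(H)|$. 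Since the lemma is quoted as ``essentially Lemma~10 in \citep{BBD21}'', I would in the write-up either cite that proof directly for step~(4) or reproduce it with the cosmetic adjustments needed to get the precise conclusion (the two alternatives ``supertrigraph of $\red(H)$'' or ``red max-degree $\ge |V(H)|$'') stated here.
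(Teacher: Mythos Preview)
Your construction is not the one used in the paper (or in \citep{BBD21}), and as stated it will not give you item~(4). The actual gadget for a vertex $v\in V(H)$ is a \emph{clique} on $t:=\abs{V(H)}^{\abs{V(H)}}+\abs{V(H)}$ vertices $v_1,\dots,v_t$, and for each edge $vw\in E(H)$ one adds the \emph{matching} $v_1w_1,\dots,v_tw_t$. The canonical partial sequence identifies, in successive passes over $V(H)$, the current contracted vertex $v'_j$ with $v_{j+1}$; one checks that at every step each red component is a subgraph of $H\blowup K_2$ (the ``$G\blowup K_2$'' in the statement is a typo---see how the lemma is applied in the proof of \cref{lem:par-to-reduced-par}) and has maximum degree at most~$2d$.

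The reason your path/subdivision idea breaks is precisely step~(4). With a path gadget $P_u$, the first cross-gadget identification of $x\in P_u$ with $y\in P_v$ produces a vertex whose red neighbours are just the $O(1)$ path-neighbours of $x$ and of $y$ (plus $O(d)$ interface vertices); nothing forces red degree $\geq\abs{V(H)}$. Indeed, for a subdivision of $H$ one can contract one subdivided edge down to a red edge $uv$, immediately merge $u$ with $v$, and continue---this keeps red degree $O(d)$ throughout and never passes through a supertrigraph of $\red(H)$. The clique gadget is what makes (4) work: if two parts lying in different cliques are identified, the resulting part is \emph{inhomogeneous to every other part meeting either clique}, so its red degree is at least the number $s$ of such parts minus~2. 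Either $s\geq\abs{V(H)}+2$ and we are done, or both cliques are already covered by fewer than $\abs{V(H)}$ parts; then a spanning-tree propagation (using that each matching forces one large part in a neighbouring clique, with size shrinking by a factor $\leq\abs{V(H)}$ at each step, which is why $t$ is exponential) exhibits parts $\{P_u:u\in V(H)\}$ inducing $\red(H)$. Your pigeonhole sketch on ``leaked edges from the $\leq d$ neighbours of $u$ and $v$'' cannot reach $\abs{V(H)}$ for this reason.
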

\begin{proof}[Proof Sketch]
In Lemma~10 of~\cite{BBD21}, the second item does not appear, and the second option of the fourth item consists of going through a trigraph of red maximum degree at least $2d+1$.
The lemma is actually shown in greater generality (see Lemma 11 of~\cite{BBD21}) and accounts for trigraphs possibly having black edges and a disconnected red graph. Since we do not need this general form here, we can simplify the construction.

The graph $G$ is built as follows, where 
$t := \abs{V(H)}^{\abs{V(H)}}+\abs{V(H)}$. 
For every vertex $v \in V(H)$, add to $G$ a clique $v_1, \ldots, v_t$.
We informally refer to  $\{\{v_1,\ldots,v_t\} : v \in V(H)\}$ as the \defn{cliques of~$G$}.
For each edge $vw \in E(H)$, add to $E(G)$ the matching $v_1w_1, v_2w_2, \ldots, v_tw_t$. 

We now describe the partial reduction sequence of $G$ satisfying the first three items.
For every $v \in V(H)$, identify $v_1$ and $v_2$, and call the resulting vertex $v'_2$.
Then for every $v \in V(H)$, identify $v'_2$ and $v_3$, and call the resulting vertex $v'_3$; and so on, until every clique of every $v \in V(H)$ has been identified to a single vertex. It is straightforward to check that this partial reduction sequence satisfies the first three items.

The proof of the fourth item follows the proof of Statement~5 in Lemma~11 in \citep{BBD21}. We sketch the argument here, since the situation is simpler. If two parts of $G$ intersecting different cliques of $G$ are identified, the red degree of the resulting part is at least the number $s$ of other parts intersecting these two cliques, possibly minus 2.
Thus the second condition of the item is satisfied unless $s < \abs{V(H)}+2$.
We may now assume that immediately  before the first such identification, there is a part $P_v$ of size at least $\abs{V(H)}^{\abs{V(H)}-1}$ entirely contained within the clique of $v \in V(H)$.
Let $w$ be a neighbour of $v$ in $H$. 
For the red degree of $P_v$ to be less than $\abs{V(H)}$, the $\abs{P_v}$ matched vertices in the clique of $w$, have to be in less than $\abs{V(H)}$ parts.
This means that the largest of those parts, $P_w$, has size at least $\abs{V(H)}^{\abs{V(H)}-2}$.
Since $H$ is connected, we continue this reasoning along a spanning tree of $H$, and exhibit a collection $\{P_u : u \in V(H)\}$ of parts inducing the trigraph $\red(H)$.
\end{proof}

The next lemma says that, for graph parameters $f$ and $g$ satisfying certain properties, $f \precneq g$ implies $\reduced{f} \precneq \reduced{g}$. A~parameter $f$ is \defn{2-blowup-closed} if there is a function $g: \mathbb N_0 \to \mathbb N_0$ such that $f(G \blowup K_2) \leq g(f(G))$ for every graph~$G$.

\begin{lem}\label{lem:par-to-reduced-par}
Let $f$ and $g$ be monotone graphs parameters such that:
\begin{itemize}
\item $f \precneq g$,
\item $f$ is union-closed and \mbox{2-blowup-closed}, 
\item there is a non-decreasing function $q': \mathbb N_0 \to \mathbb N_0$ satisfying $\lim_{n\to\infty} q'(n) = \infty$ and $g(G) \geq q'(\Delta(G))$ for every graph $G$, 
and
\item 
there is a constant $c \in \mathbb N$ such that for every $n \in \mathbb N$ there is a connected graph $H_n$ on at least $n$ vertices with $\lim_{n\to\infty} g(H_n) =\infty$ and $\reduced{f}(\red(H_n)) \leq c$.
\end{itemize}    
Then $\reduced{f} \precneq \reduced{g}$.
\end{lem}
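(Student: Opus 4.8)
The plan is to prove the two halves of $\reduced{f} \precneq \reduced{g}$ separately: first that $\reduced{f} \preceq \reduced{g}$, and second that $\reduced{g} \not\preceq \reduced{f}$. For the first half, suppose $\reduced{g}(G) \le k$, and fix a reduction sequence $G_n, \dots, G_1$ of $G$ with $g(\widetilde{G_i}) \le k$ for all $i$. Since $f \precneq g$ gives in particular $f \preceq g$, there is a function $h$ with $f(H) \le h(g(H))$ for every graph $H$; applying this to each red graph $\widetilde{G_i}$ yields $f(\widetilde{G_i}) \le h(g(\widetilde{G_i})) \le h(k)$ (taking $h$ non-decreasing without loss of generality, or bounding by $\max_{j \le k} h(j)$). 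Hence the same reduction sequence witnesses $\reduced{f}(G) \le h(k)$, so $\reduced{f} \preceq \reduced{g}$. This half uses none of the extra hypotheses and is routine.

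The heart of the lemma is the second half: showing $\reduced{g} \not\preceq \reduced{f}$, i.e.\ exhibiting graphs with bounded $\reduced{f}$ but unbounded $\reduced{g}$. The natural candidates are the graphs $G^{(n)}$ produced by applying \cref{lem:trigraph-to-graph} to the connected graphs $H_n$ from the fourth hypothesis (each $H_n$ has bounded maximum degree --- say degree $d$, which we may assume is a fixed constant by choosing the $H_n$ appropriately, or else the statement should be read with $d$ bounded; I will assume $\Delta(H_n) \le d$ for a fixed $d$). First I would argue that $\reduced{f}(G^{(n)})$ is bounded by a constant independent of $n$: by \cref{lem:trigraph-to-graph} there is a partial reduction sequence of $G^{(n)}$ down to a trigraph isomorphic to $\red(H_n)$, in which every red graph has each connected component a subgraph of $G^{(n)} \blowup K_2$; since $f$ is monotone, union-closed, and \mbox{2-blowup-closed}, each such red graph $\widetilde{G_k}$ satisfies $f(\widetilde{G_k}) \le g_0(f(G^{(n)}_k))$ for the blowup function $g_0$ --- wait, more carefully: each component is a subgraph of $G^{(n)} \blowup K_2$, and we need $f$ of that to be bounded. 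This requires knowing $f(G^{(n)})$ is bounded, which is NOT given directly; instead the bound comes from the component structure being a subgraph of the \emph{2-blowup of the red graph of an intermediate trigraph}, and the red graphs in the simplified construction in the proof sketch are (subgraphs of 2-blowups of) subgraphs of $H_n$ — hence of bounded $f$ uniformly, using that $\{H : f(H) \le \text{const}\}$ is closed under the relevant operations. Then we append a reduction sequence of $\red(H_n)$ itself: by the fourth hypothesis $\reduced{f}(\red(H_n)) \le c$, so concatenating gives $\reduced{f}(G^{(n)}) \le \max\{\text{const}, c\}$, a bound independent of $n$.

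Next I would show $\reduced{g}(G^{(n)}) \to \infty$. Take any reduction sequence of $G^{(n)}$; by the fourth bullet of \cref{lem:trigraph-to-graph} it either passes through a supertrigraph of $\red(H_n)$ or through a trigraph whose red graph has maximum degree at least $\abs{V(H_n)}$. In the first case, the red graph at that step contains $\red(H_n)$ --- hence, by monotonicity of $g$ and the fact that $\red(H_n)$ has underlying graph $H_n$ (red edges count as edges of the red graph $\widetilde{G_k}$), we get $g(\widetilde{G_k}) \ge g(H_n) \to \infty$. In the second case, the third hypothesis gives $g(\widetilde{G_k}) \ge q'(\Delta(\widetilde{G_k})) \ge q'(\abs{V(H_n)}) \to \infty$ since $q' \to \infty$ and $\abs{V(H_n)} \ge n$. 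Either way $\max_k g(\widetilde{G_k})$ is unbounded, so $\reduced{g}(G^{(n)}) \to \infty$. Combined with the uniform bound on $\reduced{f}(G^{(n)})$, this shows no function can bound $\reduced{g}$ in terms of $\reduced{f}$, completing $\reduced{g} \not\preceq \reduced{f}$ and hence $\reduced{f} \precneq \reduced{g}$.

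The main obstacle I anticipate is the bookkeeping in the first case above — carefully justifying that $\reduced{f}(G^{(n)})$ really is bounded uniformly. The subtle point is that \cref{lem:trigraph-to-graph} only controls the red \emph{components}, declaring them subgraphs of $G^{(n)} \blowup K_2$, and $G^{(n)}$ grows with $n$; one must instead track that, in the explicit construction, these components are in fact (2-blowups of) subgraphs of $H_n$ together with bounded-size clique-parts, and invoke monotonicity plus union-closedness plus 2-blowup-closedness of $f$ against a \emph{fixed} bound depending only on $H_n$'s structure, which is itself bounded because $H_n$ has bounded degree. Making this precise may require either strengthening the second bullet of \cref{lem:trigraph-to-graph} or redoing the relevant part of its proof; I would lean on the explicit clique-and-matching construction in the proof sketch, where intermediate red graphs are transparently subgraphs of $H_n \blowup K_2$.
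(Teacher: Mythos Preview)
Your overall strategy matches the paper's: apply \cref{lem:trigraph-to-graph} to each $H_n$, show the resulting family has bounded $\reduced{f}$ and unbounded $\reduced{g}$. Your treatment of the unbounded-$\reduced{g}$ half is fine; in fact your supertrigraph case is slightly cleaner than the paper's, which detours through heredity of $\reduced{g}$ rather than just observing that $\widetilde{Z}$ contains $H_n$ as a subgraph.

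The gap is in the bounded-$\reduced{f}$ half. You correctly diagnose that the second bullet of \cref{lem:trigraph-to-graph} as stated (components are subgraphs of $G^{(n)}\blowup K_2$) is not directly usable, and that one really needs the components to be subgraphs of $H_n\blowup K_2$; the paper silently makes this substitution, justified by the explicit clique-and-matching construction in the proof sketch. But then you need $f(H_n)$ bounded uniformly in $n$, and your justification --- ``bounded because $H_n$ has bounded degree'' --- is wrong on two counts. First, the lemma does not assume $\Delta(H_n)$ is bounded (nor does \cref{lem:trigraph-to-graph} require it for the bullets you use). Second, even if it were assumed, bounded degree does not bound a general monotone union-closed 2-blowup-closed parameter $f$: take $f=\pw$ and $H_n$ the complete binary tree.

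The fix is a one-line observation you already have the ingredients for: since $\reduced{f}(\red(H_n))\le c$, and the very first trigraph in any reduction sequence of $\red(H_n)$ is $\red(H_n)$ itself with red graph $H_n$, it follows immediately that $f(H_n)\le c$. Then monotonicity, union-closedness, and 2-blowup-closedness give $f(\widetilde{G_k})\le f(H_n\blowup K_2)\le q(c)$ for the blowup function $q$, uniformly in $n$. Drop the bounded-degree assumption entirely.
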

\begin{proof}
For a graph~$H$, let $t(H)$ be the graph $G$ obtained from \cref{lem:trigraph-to-graph}. Define $\mathcal F := \{t(H_n) : n \in \mathbb{N}\}$. 
Since $f \preceq g$, it is immediate that $\reduced{f} \preceq \reduced{g}$. To prove that~$\reduced{g} \not\preceq \reduced{f}$, we show that $\reduced{f}$ is bounded on $\mathcal F$, but  $\reduced{g}$ is unbounded. 

We first show that $\reduced{f}$ is bounded on $\mathcal F$.
Let $G =t(H) \in \mathcal F$. By~\cref{lem:trigraph-to-graph}, there is a partial reduction sequence $G_{\abs{V(G)}}, \ldots, G_i$ of $G$ such that $G_i$ is isomorphic to $\red(H)$, and every red graph $\widetilde{G_k}$ (for $k \in \{i, \ldots, \abs{V(G)}\}$) is a disjoint union of subgraphs of the 2-blowup of $H$.
Since $\reduced{f}(\red(H)) \leq c$, in particular, $f(H) \leq c$.
Since $f$ is monotone, union-closed and 2-blowup-closed, there is a function $q: \mathbb N_0 \to \mathbb N_0$ such that $f(\widetilde{G_k}) \leq f(H \blowup K_2) \leq q(f(H)) \leq q(c)$.
By assumption $\reduced{f}(G_i) \leq c$, thus $G$ has a (full) reduction sequence witnessing that $\reduced{f}(G) \leq \max(q(c),c)$.

We now show that $\reduced{g}$ is unbounded on $\mathcal F$.
For the sake of contradiction, suppose there exists $c' \in \mathbb N$ such that $\reduced{g}(G') \leq c'$ for every $G' \in \mathcal F$.
Let $n \in \mathbb N$ be such that $g(H_n)>c'$ and $q'(n) > c'$. 
By assumption such an integer always exists.
Consider $G :=t(H_n) \in \mathcal F$. 
By~\cref{lem:trigraph-to-graph}, every reduction sequence of $G$ either goes through a~trigraph $Y$ with red maximum degree \mbox{$\abs{V(H_n)} \geq n$} or through a~supertrigraph $Z$ of $\red(H_n)$.
In the former case, $g(\widetilde{Y}) \geq q'(\Delta(\widetilde{Y})) \geqslant q'(n) > c'$.
In the latter case, since $g$ is monotone, $\reduced{g}(Z) \geq \reduced{g}(\red(H_n)) \geq g(H_n) > c'$.
Both cases imply that $\reduced{g}(G) >  c'$, which contradicts the boundedness of $\reduced{g}$ on~$\mathcal F$.
\end{proof}

\begin{proof}[Proof of \cref{par-separation}]
First observe that $\Delta$, $\Delta+\tw$, $\Delta+\pw$ and $\bw$ are monotone, union-closed and \mbox{2-blowup-closed} with function $n \mapsto 2n+1$. If $g$ is any of these parameters, then 
$g(G) \geq \lfloor\frac{\Delta(G)}{2}\rfloor$, which provides the $q'$ function in \cref{lem:par-to-reduced-par}.

The first claim, $\reduced{\Delta} \precneq \reduced{(\Delta+\tw)}$, follows from  \cref{lem:par-to-reduced-par} where $H_n$ is the 
$n\times n$ planar grid graph, which has $(\Delta+\tw)(H_n) \geq n$ (see \citep{HW17}) and $\reduced{\Delta}(\red(H_n))=4$ (see \citep{TW-I}).

The second claim, $\reduced{(\Delta+\tw)} \precneq \reduced{(\Delta+\pw)}$, follows from  \cref{lem:par-to-reduced-par} where $H_n$ is the complete binary tree of height $n$, which has $(\Delta+\pw)(H_n) \geq \frac{n}{2} $ (see \citep{Scheffler89}) and $\reduced{(\Delta+\tw)}(\red(H_n))\leq 4$ (see~\citep{TW-I}).

Finally, $\reduced{(\Delta+\pw)} \precneq \reduced{\bw}$ follows from  \cref{lem:par-to-reduced-par} where $H_n$ is the tree defined in \cref{QnHere} on page~\pageref{QnHere}, which has $\bw(H_n)\geq \frac{n}{3}$ and $\reduced{(\Delta+\pw)}(\red(H_n))\leq 5$. The latter can be seen by iteratively identifying any leaf with its parent (in any order), which yields only subtrigraphs of $H_n$ and can be done until the trigraph has a single vertex. Throughout, the red graphs have maximum degree at most~3 and pathwidth at most~2.
\end{proof}

\section{Open Problems}
\label{OpenProblems}

Our results lead to several interesting open problems. 

\paragraph{Parameter tied to reduced bandwidth?} 

Recall that component twin-width and clique-width are tied~\citep{TW-VI}. Is there a `natural' graph parameter tied to reduced bandwidth? This is related to the question of which dense graph classes have bounded reduced bandwidth. Our results for powers give such examples. Complements provide other examples: if $\overline{G}$ is the complement of a graph $G$, then $\reduced{\bw}(\overline{G})=\reduced{\bw}(G)$, since any reduction sequence for $G$ defines a reduction sequence for $\overline{G}$ with equal red graphs. In general, $\reduced{f}(G)=\reduced{f}(\overline{G})$ for every $G$ and~$f$. 
\citet{TW-III} proved that unit interval graphs have  twin-width~2; in fact, they have a reduction sequence in which every red graph is a disjoint union of paths. Thus unit interval graphs have reduced bandwidth 1.

\paragraph{Best possible parameters:}
Is bandwidth the largest possible parameter $f$ such that planar graphs have bounded $\reduced{f}$? We formalise this question as follows. Say a graph parameter $f$ is \defn{continuous} if there exists a function $g$ such that
$f(G) \leq g( f(G-e) )$ for every graph $G$ and edge $e\in E(G)$, and
$f(G) \leq g( f(G-v) )$ for every graph $G$ and isolated vertex $v$ of $G$. 
All the graph parameters studied in this paper are continuous. 
Is there a continuous\footnote{We need to assume $f$ is continuous because of the following example. For a graph $G$, define $f(G):=\bw(G)$ if $\bw(G)\leq 466$ and $f(G):=|V(G)|$ otherwise. So $\bw\precneq f$ and planar graphs have bounded $\reduced{f}$, but $f$ is not continuous.} graph  parameter $f$ such that $\bw \precneq f$ and planar graphs have bounded $\reduced{f}$?  Maximum component size is not such a parameter, since planar grid graphs have unbounded clique-width~\citep{GR00} and therefore have unbounded $\reduced{\star}$. In general, for a graph class $\GG$, what is a continuous graph parameter $f$ such that $\reduced{f}$ is bounded on $\GG$, and $\reduced{g}$ is unbounded on $\GG$ for every parameter $g$ with $f \precneq g$. Does such a graph parameter always exist?

\paragraph{Lower Bounds:}

A non-trivial lower bound on reduced bandwidth is \cref{ExpanderClass} for expanders. Stronger lower bounds are plausible. For example, does every monotone graph class excluding a fixed complete bipartite subgraph and with bounded reduced bandwidth have polynomial expansion? It is even plausible that the degree of the polynomial is an absolute constant. Does every monotone graph class excluding a fixed complete bipartite subgraph and with bounded reduced bandwidth have linear expansion? Graphs with bounded row-treewidth have linear expansion. A good example to consider is the class of 3-dimensional grids, which we guess have unbounded reduced bandwidth.

\paragraph{Subdivisions:}

\citet{TW-II} showed that $t$-subdivisions of $K_n$ have bounded twin-width if and only if $t \in\Omega(\log n)$. No explicit bound on the twin-width was given. Recently, \citet{BBD21} proved that every $(\geq 2 \log n)$-subdivision of any $n$-vertex graph has twin-width at most 4. The proof  constructs a reduction sequence in which every red graph is a forest. So this class of graphs has exponential expansion, is  $K_{2,2}$-free,  and has  $\reduced{(\tw+\Delta)}\leq 5$. The proof in \citep{BBD21} can be adapted\footnote{In the proof of Theorem~7 in~\citep{BBD21}, replace the virtual binary red tree by an $n$-vertex red path, and observe that running the same sequence produces a red caterpillar with maximum degree 4 and at most one vertex of degree~4; refer to Figures~3 and 4 in~\cite{BBD21}. Such caterpillars have bandwidth at most~2.} to show that every $(\geq n)$-subdivision of any $n$-vertex graph has reduced bandwidth at most 2. Is this result best possible, in the sense that if $K_n^{(p)}$ is the $p$-subdivision of $K_n$ and $\reduced{\bw}(K_n^{(p)})\leq O(1)$, must $p\in\Omega(n)$? Since $\Omega(n)$-subdivisions of $n$-vertex graphs have linear expansion, a positive answer to this question would be good evidence for the suggestion above that graph classes with bounded reduced bandwidth and excluding a fixed complete bipartite subgraph have linear expansion.

\subsection*{Acknowledgements}
This research was initiated at the Dagstuhl workshop \emph{Sparsity in Algorithms, Combinatorics and Logic} (September 2021). Many thanks to the organisers and other participants. 

\subsection*{Postscript}

After the initial announcement of our results, \citet{JP22} proved that planar graphs have twin-width at most 183. Their proof is based on the proof of \cref{PlanarProductStructure}. These methods have been subsequently tailored to the twin-width setting, leading to better bounds \citep{HJ25b,Hlineny25}. The best known upper bound on the twin-width of planar graphs is 8 due to \citet{HJ25b}; the best known lower bound is 7 due to \citet{KL25}. \citet{KPS24} used a refined product structure theorem to prove a $O(\sqrt{g})$ bound on the twin-width of graphs with genus $g$.

{
\small
\let\oldthebibliography=\thebibliography
\let\endoldthebibliography=\endthebibliography
\renewenvironment{thebibliography}[1]{%
\begin{oldthebibliography}{#1}%
	\setlength{\parskip}{0ex}%
	\setlength{\itemsep}{0ex}%
}{\end{oldthebibliography}}
\def\soft#1{\leavevmode\setbox0=\hbox{h}\dimen7=\ht0\advance \dimen7
  by-1ex\relax\if t#1\relax\rlap{\raise.6\dimen7
  \hbox{\kern.3ex\char'47}}#1\relax\else\if T#1\relax
  \rlap{\raise.5\dimen7\hbox{\kern1.3ex\char'47}}#1\relax \else\if
  d#1\relax\rlap{\raise.5\dimen7\hbox{\kern.9ex \char'47}}#1\relax\else\if
  D#1\relax\rlap{\raise.5\dimen7 \hbox{\kern1.4ex\char'47}}#1\relax\else\if
  l#1\relax \rlap{\raise.5\dimen7\hbox{\kern.4ex\char'47}}#1\relax \else\if
  L#1\relax\rlap{\raise.5\dimen7\hbox{\kern.7ex
  \char'47}}#1\relax\else\message{accent \string\soft \space #1 not
  defined!}#1\relax\fi\fi\fi\fi\fi\fi}

}

\end{document}